\newcommand\todo[1]{\textcolor{red}{#1}}
\newcommand{\R}{\mathbb{R}}
\newcommand{\N}{\mathbb{N}}
\newcommand{\Z}{\mathbb{Z}}
\newcommand{\Char}{\mathbbm{1}}
\newcommand{\ones}{\mathbf{1}}
\newcommand{\M}{\mathbf{M}}
\newcommand{\mL}{\mathbf{L}}
\newcommand{\mW}{\mathbf{W}}
\newcommand{\mD}{\mathbf{D}}
\newcommand{\J}{\mathbf{J}}
\newcommand{\mQ}{\mathbf{Q}}
\newcommand{\mP}{\mathbf{P}}
\newcommand{\mA}{\mathbf{A}}
\newcommand{\mB}{\mathbf{B}}
\newcommand{\mC}{\mathbf{C}}
\newcommand{\mE}{\mathbf{E}}
\newcommand{\eye}{\mathbf{I}}
\newcommand{\vp}{\mathbf{p}}
\newcommand{\0}{\mathbf{0}}
\newcommand{\vd}{\mathbf{d}}
\newcommand{\vx}{\mathbf{x}}
\newcommand{\vy}{\mathbf{y}}
\newcommand{\va}{\mathbf{a}}
\newcommand{\vb}{\mathbf{b}}
\newcommand{\vz}{\mathbf{z}}
\newcommand{\ve}{\mathbf{e}}
\newcommand{\vecv}{\mathbf{v}}
\newcommand{\vw}{\mathbf{w}}
\newcommand{\vu}{\mathbf{u}}
\newcommand{\vf}{\mathbf{f}}
\newcommand{\cG}{\mathcal{G}}
\newcommand{\cN}{\mathcal{N}}
\newcommand{\cH}{\mathcal{H}}
\newcommand{\cE}{\mathcal{E}}
\newcommand{\cL}{\mathcal{L}}
\newcommand \innprod[2]{\langle {#1},{#2} \rangle_\frac{1}{\pi}}
\theoremstyle{acmplain}
\newtheorem{theorem}{Theorem}[section]
\newtheorem{proposition}[theorem]{Proposition}
\newtheorem{lemma}[theorem]{Lemma}
\newtheorem{definition}[theorem]{Definition}
\newtheorem{remark}[theorem]{Remark}
\def\BibTeX{{\rm B\kern-.05em{\sc i\kern-.025em b}\kern-.08emT\kern-.1667em\lower.7ex\hbox{E}\kern-.125emX}}
\begin{document}
\title{Fiedler Vector Approximation via Interacting Random Walks}

\author{Vishwaraj Doshi}
\affiliation{%
  \department{Operations Research Graduate Program}
  \institution{North Carolina State University}
  \city{Raleigh}
  \state{NC}
  \postcode{27695}
  \country{USA}}
\email{vdoshi@ncsu.edu}

\author{Do Young Eun}
\affiliation{%
  \department{Department of Electrical and Computer Engineering}
  \institution{North Carolina State University}
  \city{Raleigh}
  \state{NC}
  \postcode{27695}
  \country{USA}}
\email{dyeun@ncsu.edu}

\begin{abstract}
The Fiedler vector of a graph, namely the eigenvector corresponding to the second smallest eigenvalue of a graph Laplacian matrix, plays an important role in spectral graph theory with applications in problems such as graph bi-partitioning and envelope reduction. Algorithms designed to estimate this quantity usually rely on a priori  knowledge of the entire graph, and employ techniques such as graph sparsification and power iterations, which have obvious shortcomings in cases where the graph is unknown, or changing dynamically. In this paper, we develop a framework in which we construct a stochastic process based on a set of interacting random walks on a graph and show that a suitably scaled version of our stochastic process converges to the Fiedler vector for a sufficiently large number of walks. Like other techniques based on exploratory random walks and on-the-fly computations, such as Markov Chain Monte Carlo (MCMC), our algorithm overcomes challenges typically faced by power iteration based approaches. But, unlike any existing random walk based method such as MCMCs where the focus is on the leading eigenvector, our framework with interacting random walks converges to the Fiedler vector (second eigenvector). We also provide numerical results to confirm our theoretical findings on different graphs, and show that our algorithm performs well over a wide range of parameters and the number of random walks. Simulations results over time varying dynamic graphs are also provided to show the efficacy of our random walk based technique in such settings. As an important contribution, we extend our results and show that our framework is applicable for approximating not just the Fiedler vector of graph Laplacians, but also the second eigenvector of any time reversible Markov Chain kernel via interacting random walks. To the best of our knowledge, our attempt to approximate the second eigenvector of any time reversible Markov Chain using random walks is the first of its kind, opening up possibilities to achieving approximations of higher level eigenvectors using random walks on graphs.
\end{abstract}

%
%
\begin{CCSXML}
<ccs2012>
<concept>
<concept_id>10002950.10003624.10003633.10003645</concept_id>
<concept_desc>Mathematics of computing~Spectra of graphs</concept_desc>
<concept_significance>500</concept_significance>
</concept>
<concept>
<concept_id>10002950.10003624.10003633.10010918</concept_id>
<concept_desc>Mathematics of computing~Approximation algorithms</concept_desc>
<concept_significance>500</concept_significance>
</concept>
<concept>
<concept_id>10002950.10003648.10003671</concept_id>
<concept_desc>Mathematics of computing~Probabilistic algorithms</concept_desc>
<concept_significance>500</concept_significance>
</concept>
<concept>
<concept_id>10002950.10003648.10003700</concept_id>
<concept_desc>Mathematics of computing~Stochastic processes</concept_desc>
<concept_significance>500</concept_significance>
</concept>
</ccs2012>
\end{CCSXML}

\ccsdesc[500]{Mathematics of computing~Probabilistic algorithms; Stochastic processes; Approximation algorithms; Spectra of graphs}

\keywords{Fiedler vector, graph partitioning, spectral clustering, reversible Markov chains, interacting particle systems.}

\maketitle
\section{Introduction} \label{intro}
The eigenvector corresponding to the second smallest eigenvalue of a graph Laplacian matrix is usually referred to as the Fiedler vector. Originally introduced by Miroslav Fiedler in his works on algebraic connectivity \cite{Fiedler1,Fiedler2}, the Fiedler vector has found applications in areas such as graph partitioning and clustering \cite{Pothen1990,Pothen-sparse,min-max,Ng2001,slininger2013fiedlers,MulticlassSpectral,Dhillon,Orponen2005,pagerank-nibble}, graph drawing \cite{Drawing}, graph colouring \cite{GraphColoring}, envelope reduction \cite{Barnard1993-1} and the analysis of proteins \cite{Protein1,Protein2}. It plays an important role in spectral graph theory \cite{Mohar91, Chung1997}, providing powerful heuristics by solving relaxations of NP-hard, integer problems on graph bi-partitioning \cite{Tutorial, NCut, RCut}.

Over the years, a number of algorithms have been implemented to approximate the Fiedler vector. The most notable ones are techniques based on graph sparsification \cite{Spielman2011,Spielman2013,Spielman2014}, and multi-level/multi-grid techniques \cite{Barnard1993-1,Barnard1993-2, Urschel1, GraphContractionFV}. Both of these focus on pruning edges of the graph to obtain a `sparser' or `coarser' subgraph. Fiedler vector computation is then performed on these subgraphs to obtain approximations to the Fiedler vector of the original graph. While the pruning can be done via innovative probabilistic rules based on measures such as effective resistance \cite{Spielman2011}, or using greedy, deterministic rules \cite{Barnard1993-2}, the core computation of the Fiedler vector is still carried out by various kinds of power methods. The Fiedler vector is also directly related to the mixing time\cite{Levin,aldous,Bremaud} via the corresponding second eigenvalue $\lambda_2(\mQ)$, also known as the spectral gap, and algorithms to approximate this quantity \cite{Combes2019, I.Han-spectral,NIPS'15-spectral} are fundamentally different from those estimating the Fiedler vector.
Other techniques include \cite{BERTRAND2013}, which approximates the Fiedler vector in a distributed fashion over ad-hoc networks, where `ad-hoc' refers to the nodes having the ability to process information locally and exchange information with neighbors in a synchronous fashion, and `distributed' refers to each node estimating it's own component of the Fiedler vector using a local version of power iterations; and  \cite{Srinivasan2002}, which uses techniques such as matrix deflation to develop another power method to numerically compute the Fiedler vector using the dominant eigenvector of a slightly smaller matrix.

While  deterministic power method based techniques for approximating the Fiedler vector can have their advantages under the relevant settings (entire state space known beforehand, ad hoc networks with computational capability at each node and message passing), they face challenges when the state space may be unknown in the beginning, or the graph can only be explored via edge traversal mechanisms and direct access to any arbitrary node is not available. Moreover, they do not adapt well to dynamic graphs. This can especially be seen in \cite{BERTRAND2013} where recurring small changes in the graph topology can break the important mean-preserving property of their power method, which is corrected via a mechanism only at every $N^{th}$ iteration ($N$ being the number of nodes in the graph) of the algorithm. In other words, given a change in graph topology, the algorithm may not correct its trajectory till almost $N$ many steps, making the case worse for larger graphs. 
Random walk based methods, on the other hand, provide a way to deal with the above challenges by performing \emph{in situ} computations as they explore the graph on-the-fly. As exhibited time and again in the field of Markov Chain Monte Carlo (MCMC) \cite{Hastings1997,Jun_MCMC_Book,Peskun1973}, they can be used robustly to estimate target quantities on graphs without really feeling the repercussions of scale, lack of knowledge of the state space or the effect of dynamically changing graph topology. While MCMC techniques, by employing various versions of random walks and via the ergodic theorem, are successful in estimating $\pi$ (or sampling according to $\pi$) - the first/leading/principal eigenvector of the kernel $\mQ$, no similar techniques have provided extensions to the second eigenvector of the kernel.

In this paper, we fill this void by developing a framework based on (interacting) random walks to approximate the Fiedler vector of graph Laplacian matrices. We do this by constructing a stochastic process employing multiple \emph{interacting} random walkers and showing that a properly scaled version of this process converges to the Fiedler vector. Specifically, these random walkers traverse an undirected, connected graph $\cG$ according to a Continuous Time Markov Chain (CTMC) with kernel given by $\mQ = -\mL$, where the matrix $\mL \triangleq \mD - \mA$ is the combinatorial Laplacian of $\cG$ ($\mA$ is the adjacency matrix and $\mD$ is the degree matrix). Walkers are divided equally into two groups, that compete with each other over the network. If a walker encounters another one from the other competing group, it causes the walker to relocate to the location of another randomly selected walker of the competing group. Such competitive interactions are mutual, and for a sufficiently large number of walkers, lead to a natural bi-partition of the graph over time. By analyzing a closely related deterministic process (the fluid limit) we show that the relative density of walkers over the graph serves as a good approximation of the Fiedler vector.  We then extend our results to show that our method based on interacting random walks applies to other commonly used graph Laplacians, as well as for estimating the second eigenvector of any time reversible Markov chain kernel. While algorithms based on random walks successfully achieve, via ergodic theorems, knowledge about the \emph{first eigenvector} of matrics, our paper takes a step forward and provides an interacting random walk based algorithm to achieve the \emph{second eigenvector} of a class of matrices. To the best of our knowledge, our framework is the first one to do so, and opens up possibilities of estimating higher order eigenvectors by using such interacting random walk based techniques.

In the remainder of the paper, we begin by giving the basic notations and an introduction to the Fiedler vector via its application in graph partitioning in Section \ref{preliminaries}. The main theoretical results are distributed among Sections \ref{stochastic section}, \ref{deterministic section} and \ref{deterministic-stochastic}. In Section \ref{stochastic section}, we detail the construction of our stochastic process and via Theorem \ref{fluid limit} we relate it to a system of ordinary differential equations (ODEs) as its fluid limit. We show that over a finite time horizon, the stochastic process rarely deviates from the solution of the ODE when the number of walkers is sufficiently large. Section \ref{deterministic section} is devoted to the stability analysis of the resulting deterministic ODE system, where we show using a Lyapunov function that the Fiedler vector is the only asymptotically stable fixed point of a suitably scaled version of the system, while all others being unstable. In Section \ref{deterministic-stochastic} we bring together our results from Sections \ref{stochastic section} and \ref{deterministic-stochastic} to formally show that for sufficiently large number of walkers, our stochastic process spends most of its time in the long run around the asymptotically stable Fiedler vector, while never getting stuck around an unstable fixed point.  In Section \ref{Numericals}, we provide numerical results to support our theoretical findings and also simulations over time varying dynamic graphs to show the robustness of our framework in that setting. In Section \ref{time reversible}, we extend all our results to include various graph Laplacians and time reversible Markov chain kernels. Section \ref{conclusion} provides our concluding remarks.

\section{Preliminaries} \label{preliminaries}
\subsection{Basic notations} \label{notations}
Let $\cG(\cN,\cE)$ denote a general, undirected, connected graph, with $\cN$ being the set of nodes and $\cE$ being the set of edges, represented by pairs $(i,j)$ for $i,j\in\cN$. Let the cardinality of $\cN$ be given by a natural number $N$ (i.e. $|\cN| = N$). The mathematical quantities best capturing all the information of $\cG(\cN,\cE)$ are the `adjacency matrix' $\mA$\footnote{In the case of a weighted graph,  we replace $\mA$ by the weighted adjacency matrix $\mW$, where the $ij^{th}$ entries represent the weights assigned to each edge. All other equations remain the same. However, we will safely exclude any further, separate mention of weighted graphs because the scope of our results is broad enough to cover not just weighted graphs, but also similar quantities such as kernels of time reversible Markov chains, as we shall observe later in Section \ref{time reversible}.}  defined as
$\mA_{ij} \triangleq 1, \text{ if } (i,j) \in \cE$ and $0, \text{ otherwise}\ \forall i,j\in\cN,$
and the diagonal `degree matrix' $\mD$ of the graph defined as
$
\mD_{ii} \triangleq \sum_{j\in\cN} A_{ij} \ \ \forall i \in \cN.
$
We call $\mD_{ii}$ the `degree' of node $i\in\cN$, and alternatively represent it as $d(i) \triangleq \mD_{ii}$.

Since vectors and matrices will be used throughout the paper, we standardize their notation. Lower case, bold faced letters will be used to represent vectors (e.g. $\vecv \in \R^N$), while upper case, bold faced letters will be used to represent matrices (e.g. $\M \in \R^{N \times N}$), unless clarified otherwise. The $i^\text{th}$ ($ij^\text{th}$) entry of vector $\vecv$ (matrix $\M$) will be denoted by $v_i$ or $[\vecv]_i$ ($M_{ij}$ or $[\M]_{ij}$), depending on the situation. We let $\mD_\vecv := \text{diag}(\vecv)$ represent the diagonal matrix with $[\mD_\vecv]_{ii} = v_i$. Also denote by $\ones = [1, 1, \cdots, 1]^T$ and $\0 = [0, 0, \cdots, 0]^T$, the $N$-dimensional vectors of all ones and zeros respectively, and $\ve_k$ for all $k\in \{1,\cdots, N\}$ represent the canonical basis vectors in $R^N$, which take value $1$ at their $k^\text{th}$ entry, $0$ at every other entry.

The use of `$\mQ$' will be reserved exclusively for representing transition rate matrices of continuous time Markov chains (CTMCs). `$\mP(\cdot)$' will be used to denote the probability measure, while simple `$\mP$' will be used exclusively for transition probability matrices of discrete time Markov chains (DTMCs). Note that throughout the paper, we shall often refer to these matrices using the umbrella term \textit{kernel} of Markov chains. Vectors such as $\vx(t)$ and $\vy(t)$, with $t$ being time, will be used to denote (deterministic) solutions to systems of ordinary differential equations (ODEs). However, if indexed by a parameter, for example $\vx^n(t)$ and $\vy^n(t)$, they will be used to denote stochastic processes (also parameterized by $n$). This distinction between notations of similar deterministic and stochastic quantities shall be reiterated when we define such quantities later on.

Finally, we let $\| \cdot \|$ denote the Euclidean norm for any Euclidean space (i.e. with the appropriate dimensions implicitly understood), and use $\tilde{\cdot}$ to denote normalized versions of vectors, or sets containing normalized vectors.
\subsection{Graph Laplacians} \label{laplacians pre}
Consider an undirected, connected graph $\cG(\cN,\cE)$. The quantity of interest throughout the paper will be the \textit{Combinatorial Laplacian}, which is defined as $$\mL \triangleq \mD - \mA.$$
$\mL$ is a symmetric (due to the graph being undirected), positive-semidefinite matrix with non negative eigenvalues. Since the graph is connected, $\mA$, and as a result $\mL$, are irreducible matrices. Thus, the Perron Frobenius (PF) \cite{Horn, Carl2000} theorem applies and the smallest eigenvalue $0$ (with eigenvector being $\ones$) has multiplicity $1$. We denote by $0 = \lambda_1 < \lambda_2 \leq \cdots \leq \lambda_N$, the spectrum of $\mL$, with $\ones = \vecv_1,\ \vecv_2, \cdots ,\ \vecv_N$ being the corresponding eigenvectors.

Closely related to the combinatorial Laplacian is the \textit{normalized or symmetric Laplacian}, which goes by
$$ \cL \triangleq \mD^{-1/2}\mL\mD^{-1/2} = \eye - \mD^{-1/2}\mA\mD^{-1/2}.$$
Also a symmetric positive semi-definite matrix, it's smallest eigenvalue is $0$ (with corresponding eigenvalue being $\mD^{1/2}\ones$). Denote by $0 = \bar\lambda_1 < \bar \lambda_2 \leq \cdots \leq \bar \lambda_N$, the spectrum of $\mL$, with $\mD^{1/2}\ones = \bar\vecv_1,\ \bar\vecv_2, \cdots ,\ \bar\vecv_N$ being the corresponding eigenvectors.

The third graph Laplacian we introduce is the \textit{random walk Laplacian}, denoted by $\mL^{rw}$. We shall define it using the combinatorial and symmetric Laplacians as $$\mL^{rw} \triangleq \mD^{-1}\mL = \mD^{-1/2}\cL\mD^{1/2} = \eye - \mD^{-1}\mA.$$ 
It gets its name because $\mP = \mD^{-1}A$ is a stochastic matrix (all the rows add up to one) which defines a simple random walk in discrete time on the graph. $\mL^{rw}$ is not a symmetric matrix. However, note that due to the similarity transformation (second equality), it shares the same spectrum as $\cL$with its left eigenvectors given by $\vecv_i^{rw} = \mD^{1/2}  \bar \vecv_i$ where $\bar \vecv_i$ is an eigenvector of $\cL$. The PF eigenvector of $\mL^{rw}$ is therefore given by $\vecv_1^{rw} = \mD\ones$, the vector with its entries being the degree of the respective node.

\subsection{Graph bi-partitioning and Fiedler vector}
The second eigenvalue of $\mL$, i.e. $\lambda_2$, is known as the algebraic connectivity of a graph, and the corresponding eigenvector is commonly referred to as the \emph{Fiedler vector}. We shall however use the term \emph{Fiedler vector} to refer to second eigenvectors of any graph Laplacian matrix. To convey the relevance of the Fiedler vector, we look at the problem of bi-partitioning a graph, whose objective is to partition a connected graph into two connected subgraphs in the most `natural' way possible. This is often interpreted as partitioning the graph into two subgraphs $S$ and $S^c$ with the fewest number of edges interlinking them, but also not having meaningless solutions such as partitioning the graph at leaf nodes. This train of thought has evolved into the study of problems such as the \textit{Ratio-cut} problem (\textit{RCut}) \cite{Tutorial, RCut} and the \textit{Normalized-cut} problem (\textit{NCut}) \cite{Tutorial, NCut}, among others. Define $\text{Cut}(S) \triangleq (1/2)\sum_{i \in S} \sum_{j \in S^c} A_{ij},$ for any $S\subset \cN$. The \textit{RCut} problem for graph bi-partitioning is given by 
\begin{equation}
\text{RCut}(\cG) = \min_{S \subset \cN} \text{RCut}(S) = \min_{S \subset \cN} \Big( \frac{\text{Cut}(S)}{|S|} + \frac{\text{Cut}(S^c)}{|S^c|}  \Big),
\end{equation}
Unfortunately, minimizing the \textit{RCut} over all subsets of $\cN$ is an NP hard problem. \cite{RCut} showed that by enlarging the integer valued domain into the real valued domain, approximate solutions can be found efficiently. For the \textit{RCut} problem, this real valued relaxation is given by
$$
\text{RCut}(\cG) \approx \min_{\vf \perp \ones, \ \vf \neq 0}  \frac{\vf^T \mL \vf}{\vf^T\vf},
$$
the solution to which is well known, and is the Fiedler vector $\vecv_2$ of $\mL$. Likewise, a relaxation of the similar \textit{NCut}\footnote{The \textit{NCut} problem is similar to the \textit{RCut} problem with the terms $|S|$ and $|S^c|$ in the denominator replaced by $\text{Vol}(S)$ and $\text{Vol}(S^c)$, where $\text{Vol}(S) \triangleq \sum_{i \in S} d(i)$.} 
problem is solved by $\bar \vecv_2$, the Fiedler vector of the normalized Laplacian $\cL$ (or equivalently, the Fiedler vector of $\mL^{rw}$, since they share the same signs for the entries, leading to the same partition). Partitioning according to the signed entries of these second eigenvectors therefore solves natural relaxations to well-defined but difficult to solve bi-partitioning problems, and these observations have paved the way for spectral clustering\cite{Mohar91, Chung1997,Ng2001, NMF_clustering, MulticlassSpectral, Dhillon, RCut, NCut, Pothen1990,Simon1991} as a powerful data analysis tool.

\section{A multi-walk, interacting stochastic process with a deterministic limit} \label{stochastic section}
We aim to construct a random walk based process which, in the long run, can approximate the Fiedler vector (second eigenvector of $\mL=\mD-\mA$). Our process consists of multiple random walks interacting with each other in a prescribed manner. This can also be written as a density dependent process (with parameter `$n$' proportional to the number of random walks) that has a deterministic fluid limit. In this section we provide construction of our stochastic process, and the result on its convergence to a deterministic process (its fluid limit).

\subsection{The interacting stochastic process} \label{construction}
Let $\cG(\cN,\cE)$ be any undirected, connected graph, with $N=|\cN|$ as before. For any $n \in \N$, consider $2n$-many random walkers traversing $\cG(\cN,\cE)$ according to a CTMC generated by the kernel $\mQ = -\mL = \mA-\mD$. These $2n$ walkers are split into two groups, of $n$ walkers each, by labeling them as either `type-\emph{x}' or `type-\emph{y}'. Define the stochastic process $\big(X(t),Y(t)\big)_{t\geq0}$, where $X(t) \in \N_0^N$ ($Y(t) \in \N_0^N$) is a vector such that $X(t)_i$ ($Y(t)_i$) indicates the number of type-\emph{x} ( type-\emph{y}) walkers present at node $i\in\cN$ at time $t\geq0$. Since there are $n$ walkers of each type, we have $\sum_{i \in \cN} X(t)_i = \sum_{i \in \cN} Y(t)_i = n$ for all $t \geq 0$. The state space is given by
\begin{equation}
\Gamma_n \triangleq \left\lbrace (X,Y) \in \N_0^N \times \N_0^N\ \ \Bigg| \ \ \sum_{i \in \cN} X_i = \sum_{i \in \cN} Y_i = n \right\rbrace.
\label{bar S_n}
\end{equation}

Ever so often, a type-\emph{x} walker traversing the graph may find itself at a node with type-\emph{y} walkers present. Such an event is what we call an `interaction'. When an interaction occurs, every type-\emph{y} walker present `kills' each type-\emph{x} walker present with rate $\kappa/n$, for some scalar $\kappa \in (0,+\infty)$. In return, every type-\emph{x} walker present `kills' each type-\emph{y} walker present with rate $\kappa/n$. Another way to describe this interaction is in a pairwise manner. Given any possible `pair' of one type-\emph{x} and one type-\emph{y} walker present at node $j$, they both kill each other with rate $\kappa/n$. At every node $j \in \cN$ there are $X(t)_jY(t)_j$ such pairs at any time $t\geq0$. Thus, overall, type-\emph{x} or type-\emph{y} walkers die with rate $\frac{\kappa}{n}X(t)_jY(t)_j$. Upon being killed, the `dead' walker relocates to the position of another randomly selected walker of the same type. At time $t>0$, this is equivalent to saying that a killed type-\emph{x} walker will redistribute to some node $i \in \cN$ with probability $X^n(t)_i / n$, which is proportional to the number of type-\emph{x} walkers present at node $i$. Similarly, a killed type-\emph{y} walker will redistribute to node $i \in \cN$ with probability $Y^n(t)_i / n$.

\begin{figure*}
  \centering
  \includegraphics[width=\textwidth]{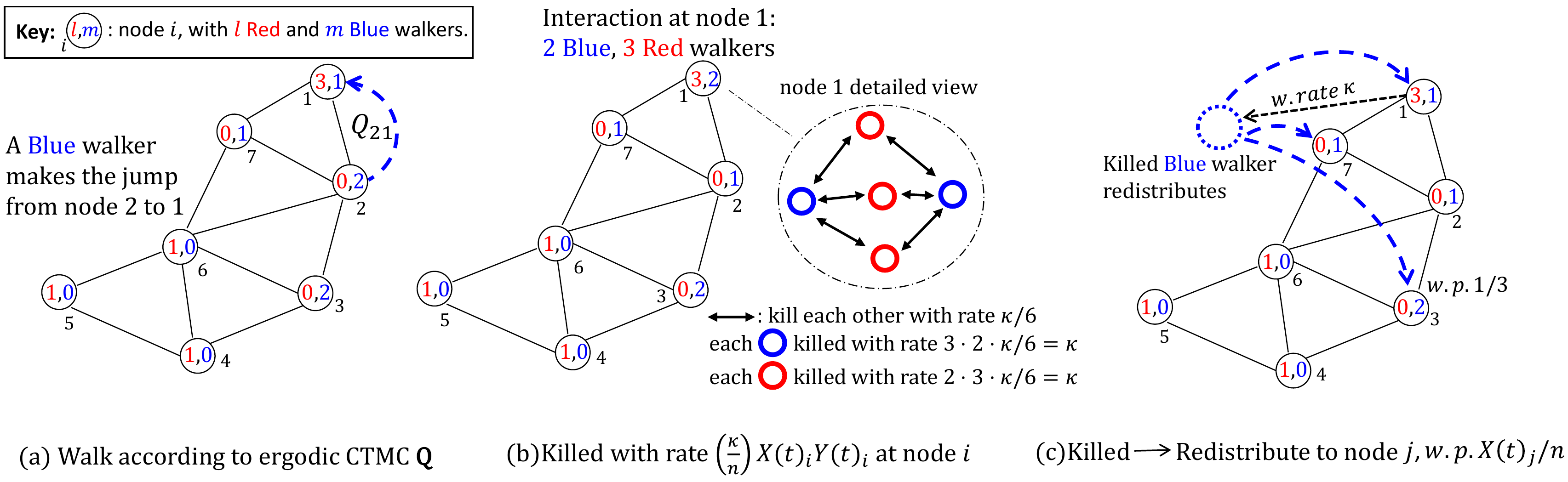}
  \caption{The interaction and redistribution mechanism.}
  \vspace{-5mm}
  \label{fig}
\end{figure*}

Consider Figure \ref{fig} as an example of an interaction event. We consider $n=6$. The colour blue is used for type-\emph{x} walkers, while red is used for type-\emph{y} walkers. At each node, the number of \emph{blue} and \emph{red} walkers present is given by the respectively coloured numeric entry. Figure \ref{fig}(a) shows a \emph{blue} walker moving from its initial position at node $2$, to node $1$, with rate $Q_{21}$. Figure \ref{fig}(b) shows a snapshot of the system at a time right after the event in Figure \ref{fig}(a). Node $1$ is now the site of interactions between $2$ \emph{blue} and $3$ \emph{red} walkers. A detailed view of the interactions at node $1$ is shown, where an arrow represents a pair of walkers killing each other with rate $\kappa/n$. Let $r \in \N$ count the number of arrows connected to any particular walker. Then, that walker is killed by the group of the other type walkers with the total rate $\frac{\kappa}{n}\cdot r$.
Following this logic, each \emph{blue} walker in Figure \ref{fig}(b) is killed with rate $\frac{\kappa}{6}\cdot 3 = \kappa/2$. Since there are two \emph{blue} walkers present, one of them is killed with rate $\frac{\kappa}{2}\cdot 2 = \kappa$. Similarly, each \emph{red} walker is killed individually with rate $\kappa/3$, and since there are three \emph{red} walkers, one of them is killed with rate $\kappa$. Therefore the rate with which a \emph{blue} walker dies is the same at which a \emph{red} walker dies at node $1$. Finally, Figure \ref{fig}(c) shows an event in which a \emph{blue} walker is killed at node $1$. Upon death, it instantaneously relocates to the position of another \emph{blue} walker chosen uniformly at random. That is, it randomly redistributes to node $i$ with probability $X(t)_j/n$. A similar redistribution would occur if a \emph{red} walker was to die instead of a \emph{blue} walker, in which case it would redistribute to node $j$ with probability $Y(t)_j/n$.

Events of the process $\big(X(t),Y(t)\big)_{t\geq0}$ involve type-\emph{x} or type-\emph{y} walkers going from a node $j$ to another node $i$. This corresponds to jumps of size $(\ve_i - \ve_j , 0)$ for type-\emph{x} and $(0,\ve_i - \ve_j)$ for type-\emph{y} walkers. At any state $(X,Y) \in \Gamma_n$, jumps can be caused by walking from $j \to i$ according to the base CTMC kernel $\mQ$, which happens with rate $Q_{ji}X_j$ for type-\emph{x} walkers and $Q_{ji}Y_j$ for type-\emph{y} walkers. The jumps can also occur as a result of being killed at node $j$ and redistributed to node $i$. As mentioned earlier, one of the type-\emph{x} walkers at node $j$ is killed with rate $\frac{\kappa}{n}X(t)_jY(t)_j$, upon which it redistributes to $i$ with probability $\frac{X_i}{n}$. Therefore, the jump $j \to i$ occurs with the overall rate $\frac{\kappa}{n}X(t)_jY(t)_j  \frac{X_i}{n}$ for type-\emph{x} walkers. Similarly, type-\emph{y} are killed at $j$ and redistributed to $i$ with overall rate $\frac{\kappa}{n}X(t)_jY(t)_j  \frac{Y_i}{n}$.

Hereafter, for any state $(X,Y) \in \Gamma_n$, we use $\vx \in \R^N$ to denote the density distribution of type-\emph{x} walkers over the graph, with $x_i \triangleq \frac{X_i}{N}$. Similarly we use $\vy \in \R^N$ to denote the density distribution of type-\emph{y} walkers over the graph, with $y_i \triangleq \frac{Y_i}{N}$. Let $\bar \mQ^n_{x:j \to i}(X,Y)$ denote the total rate with which jumps of type $(\ve_i - \ve_j , 0)$ occur, and similarly let $\bar \mQ^n_{y:j \to i}(X,Y)$ denote the total rate with which jumps of size $(0 , \ve_i - \ve_j )$ occur. From all of the above, these two quantities can be written as
\begin{align}
\bar Q^n_{x:j \to i} (X,Y) = Q_{ji}X_j + (\frac{\kappa}{n} Y_j X_j) x_i = n\big(Q_{ji}x_j + (\kappa y_j x_j) x_i\big)
\label{rate x} \\
\bar Q^n_{y:j \to i} (X,Y) = Q_{ji}Y_j + (\frac{\kappa}{n} X_j Y_j) y_i = n\big(Q_{ji}y_j + (\kappa x_j y_j) y_i\big)
\label{rate y}.
\end{align}
We can think of $\bar Q^n_{x:j \to i}$ and $\bar Q^n_{y:j \to i}$ for all $i,j \in \cN, i\neq j$ as the off-diagonal entries of a $2N \times 2N$ dimensional matrix $\bar\mQ(X,Y)$. This matrix is then the transition rate matrix of the CTMC $\big(X(t),Y(t)\big)_{t \geq 0}$ on a finite state space $\Gamma_n$ as in \eqref{bar S_n}, and we have now fully characterized our stochastic process.

\subsection{A closely related deterministic system} \label{ODE intro}
Define the density dependent version of  $\big(X(t)/n,Y(t)/n \big)_{t\geq0}$ as
$\big(\vx^n(t),\vy^n(t)\big)_{t\geq0} \triangleq \big(X(t)/n,Y(t)/n \big)_{t\geq0}$. Its state space $\Theta_n$ is a version of $\Gamma_n$ where each entry is scaled by $\frac{1}{n}$, or more precisely,
\begin{equation}
\Theta_n = \Sigma_n \times \Sigma_n,
\label{Theta}
\end{equation}
where $\Sigma_n = \Sigma \cap \{\frac{1}{n}\vu \ | \ \vu \in \Z^N \}$, with $\Sigma \triangleq \{ \vu \in \R^N \ | \ \vu^T \ones = 1, \vu \geq 0\}$ and $\Z^N$ being the $N$-dimensional grid. Jumps of size $\frac{1}{n}(\ve_i - \ve_j, 0)$ and $\frac{1}{n}(0, \ve_i - \ve_j)$ for the process $\big(\vx^n(t),\vy^n(t)\big)_{t\geq0}$ occur with rates \eqref{rate x},\eqref{rate y}. Using this information, we can define a vector field $F:\Theta_n\rightarrow \R^{2N}$. For any $(\vx,\vy) \in \Theta_n$, $F(\vx,\vy)$ captures the average change in $(\vx,\vy) \in \Theta_n$ per unit time, and is written as
\begin{equation}
F(\vx,\vy) = \big(F_x(\vx,\vy), F_y(\vx, \vy)\big),
\label{Deterministic F(x,y)}
\end{equation}
where
\begin{align}
F_x(\vx,\vy) &\triangleq \sum_{i \in \cN} \sum_{j \in \cN, j \neq i} \frac{(\ve_i-\ve_j)}{n} \bar Q^n_{x:j,i} (\vx,\vy)
\label{mean field X}\\
F_y(\vx,\vy) &\triangleq \sum_{i \in \cN} \sum_{j \in \cN, j \neq i} \frac{(\ve_i-\ve_j)}{n} \bar Q^n_{y:j,i} (\vx,\vy).
\label{mean field Y}
\end{align}
The following result helps us write the above two equations in a more compact form.
\begin{proposition}(Proof in Appendix \ref{proof 3.1}.) For any $(\vx,\vy) \in \Theta_n$, we have
\begin{align}
F_x(\vx,\vy) = \mQ^T\vx + [\kappa \vx^T\vy] \vx - \kappa \mD_{\vy} \vx
\label{F_x}\\
F_y(\vx,\vy) = \mQ^T\vy + [\kappa \vx^T\vy] \vy - \kappa \mD_{\vx} \vy
\label{F_y}
\end{align}
\label{Prop Simplify}
\end{proposition}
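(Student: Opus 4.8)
The plan is to substitute the explicit transition rates \eqref{rate x} directly into the definition \eqref{mean field X} of $F_x$ and simplify coordinate-by-coordinate, after which the formula \eqref{F_y} for $F_y$ follows by the $\vx\leftrightarrow\vy$ symmetry of the construction. Reading the rate in its density form $\bar Q^n_{x:j\to i}(\vx,\vy) = n\big(Q_{ji}x_j + \kappa y_j x_j x_i\big)$, the prefactor $n$ cancels the $\tfrac1n$ in \eqref{mean field X}, leaving
\[
F_x(\vx,\vy) = \sum_{i\in\cN}\sum_{j\neq i}(\ve_i - \ve_j)\big(Q_{ji}x_j + \kappa y_j x_j x_i\big).
\]
First I would split this into a \emph{base-walk} term carrying $Q_{ji}x_j$ and an \emph{interaction} term carrying $\kappa y_j x_j x_i$, and treat each separately by reading off the $k$-th coordinate, the $\ve_i$ piece contributing at $i=k$ and the $-\ve_j$ piece at $j=k$.

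For the base-walk term, the $k$-th coordinate picks up $\sum_{j\neq k}Q_{jk}x_j$ from the $\ve_i$ piece and $-x_k\sum_{i\neq k}Q_{ki}$ from the $-\ve_j$ piece. The key step is to invoke the zero-row-sum property of the CTMC generator, $\sum_i Q_{ki}=0$, so that $\sum_{i\neq k}Q_{ki} = -Q_{kk}$; the two restricted sums then recombine into the unrestricted $\sum_j Q_{jk}x_j = [\mQ^T\vx]_k$, yielding the first term $\mQ^T\vx$ of \eqref{F_x}.

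For the interaction term, the $k$-th coordinate picks up $\kappa x_k\sum_{j\neq k}y_j x_j$ from the $\ve_i$ piece and $-\kappa y_k x_k\sum_{i\neq k}x_i$ from the $-\ve_j$ piece. Here I would use the simplex constraint $\vx^T\ones = 1$ (valid since $(\vx,\vy)\in\Theta_n$ by \eqref{Theta}) together with $\sum_{j\neq k}y_j x_j = \vx^T\vy - x_k y_k$ and $\sum_{i\neq k}x_i = 1 - x_k$. The two cubic cross terms $\kappa x_k^2 y_k$ then cancel exactly, leaving $\kappa[\vx^T\vy]x_k - \kappa x_k y_k$, which is precisely the $k$-th coordinate of $[\kappa\vx^T\vy]\vx - \kappa\mD_{\vy}\vx$. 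Summing the two contributions gives \eqref{F_x}.

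I do not anticipate a genuine obstacle, since the statement is an identity proved by direct computation; the only care required is bookkeeping around the $j\neq i$ restriction. The mild points worth highlighting are that it is exactly the two structural facts above -- the generator's zero row sums and the simplex constraint -- that allow the restricted sums to be rewritten as full, unrestricted quantities, and that the clean cancellation of the cubic cross terms is what lets the interaction contribution collapse into the compact form $[\kappa\vx^T\vy]\vx - \kappa\mD_{\vy}\vx$. The companion identity \eqref{F_y} for $F_y$ is then obtained verbatim by interchanging the roles of $\vx$ and $\vy$ throughout.
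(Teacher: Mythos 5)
Your proposal is correct and follows essentially the same route as the paper's proof: split $F_x$ into the base-walk and interaction terms, identify the $k$-th coordinate of each, use the zero row sums of $\mQ$ to recombine the base-walk term into $[\mQ^T\vx]_k$, and use the simplex constraint to collapse the interaction term into $[\kappa\vx^T\vy]x_k - \kappa x_k y_k$, with $F_y$ by symmetry. The only cosmetic difference is in the interaction-term bookkeeping -- the paper adds and subtracts $(\kappa x_k y_k)x_k$ to complete both sums, while you expand the restricted sums and observe the cubic cross terms cancel -- which is the same algebra written in a different order.
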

Proposition \ref{Prop Simplify} allows us to consider the following system of ordinary differential equations (ODEs):
\begin{equation}
\begin{split}
\frac{d}{dt}\vx(t) = F_x\big(\vx(t),\vy(t)\big) \\
\frac{d}{dt}\vy(t) = F_y\big(\vx(t),\vy(t)\big) \\
\vx(0)^T\ones = \vy(0)^T\ones = 1,
\end{split}
\label{ODE F}
\end{equation}
where we let $\big(\vx(t),\vy(t)\big)_{t\geq 0}$ denote solutions to \eqref{ODE F} for $t \geq 0$ (this is sometimes referred to as the semi-flow of $F$). Note the distinction between the notations $\big( \vx^n(t),\vy^n(t) \big)$ and $\big(\vx(t),\vy(t)\big)$, where the former is a stochastic process (parameterized by $n$), while the latter is the deterministic solution to an ODE system. For the next result, let $\| \cdot \|$ denote the Euclidean norm in $\R^{2N}$, and $S$ be the subset of $\R^{2N}$ given by
\begin{equation}
S \triangleq \{(\vx,\vy) \in \R^{2N} ~|~ \vx^T\ones = \vy^T\ones = 1\}.
\end{equation}
\begin{proposition}(Proof in Appendix \ref{proof 3.2}.) $F:S\rightarrow\R^{2N}$ is Lipschitz continuous with Lipschitz constant $M<\infty$, and for every initial point $\big(\vx(0),\vy(0)\big) \in S$, there exists a unique solution to the system \eqref{ODE F}.
\label{Lipschitz prop}
\end{proposition}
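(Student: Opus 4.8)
The plan is to read off the analytic structure of $F$ from the compact form in Proposition~\ref{Prop Simplify} and then invoke the Cauchy--Lipschitz (Picard--Lindel\"of) theorem. Writing $F(\vx,\vy)=(F_x,F_y)$, each component is a polynomial map in the entries of $(\vx,\vy)$: a linear part $(\mQ^T\vx,\mQ^T\vy)$, a cubic part $\kappa(\vx^T\vy)(\vx,\vy)$, and a quadratic part $-\kappa(\mD_\vy\vx,\mD_\vx\vy)$. Being polynomial, $F$ is $C^\infty$ and hence locally Lipschitz, so the only real content of the statement is producing a \emph{uniform, finite} Lipschitz constant on the relevant domain and upgrading local existence to global existence.

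First I would bound the Jacobian $DF$. Differentiating \eqref{F_x}--\eqref{F_y} gives blocks such as $\partial F_x/\partial\vx=\mQ^T+\kappa\vx\vy^T+\kappa(\vx^T\vy)\eye-\kappa\mD_\vy$ and $\partial F_x/\partial\vy=\kappa\vx\vx^T-\kappa\mD_\vx$, whose entries are \emph{affine} in $(\vx,\vy)$. On any bounded set the operator norm $\|DF\|$ is therefore finite, and by the mean-value inequality $F$ is Lipschitz there. The subtlety is that $S$ is an (unbounded) affine subspace, on which these affine-in-$(\vx,\vy)$ Jacobian entries --- equivalently the cubic term in $F$ --- are not uniformly bounded. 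The resolution, which I expect to be the crux, is to restrict attention to the compact, convex, forward-invariant simplex $\Sigma\times\Sigma\subset S$ where the dynamics actually live.

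Establishing invariance is thus the main step. Preservation of the affine constraints $\vx^T\ones=\vy^T\ones=1$ follows from $\mQ\ones=\0$ (so $(\mQ^T\vx)^T\ones=\vx^T\mQ\ones=0$) together with the cancellation $\kappa(\vx^T\vy)(\vx^T\ones)-\kappa(\mD_\vy\vx)^T\ones=\kappa(\vx^T\vy)\big(\vx^T\ones-1\big)$, which yields $\tfrac{d}{dt}(\vx^T\ones)=\kappa(\vx^T\vy)(\vx^T\ones-1)$ and hence $\vx^T\ones\equiv1$ whenever it starts at $1$; the same holds for $\vy$. Preservation of nonnegativity follows from a boundary check: on the face $\{x_i=0\}$ one has $\dot x_i=[\mQ^T\vx]_i=\sum_{j\neq i}A_{ji}x_j\geq0$, since the diagonal term drops out and $\mA\geq0$, so the vector field points back into $\Sigma$. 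Thus $\Sigma\times\Sigma$ is compact and forward invariant, the Jacobian bound gives a finite $M$ (of the form $M=\|\mQ\|+c\kappa$) valid on this convex set, and Cauchy--Lipschitz delivers a unique local solution for each admissible initial point; global existence for all $t\geq0$ is then immediate because the trajectory cannot leave the compact set $\Sigma\times\Sigma$ and therefore cannot blow up in finite time.
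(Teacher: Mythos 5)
Your proof is correct, and on the one point that matters it is more careful than the paper's own argument. The paper's proof (Appendix \ref{proof 3.2}) writes $F$ in block-matrix form, applies the mean value theorem to get $\|F(\vx,\vy)-F(\vu,\vw)\| = \| \J_F(\va,\vb)\| \, \|(\vx,\vy)-(\vu,\vw)\|$ for a point $(\va,\vb)$ on the connecting segment, asserts that the Jacobian entries are ``bounded uniformly,'' and concludes a global Lipschitz constant on $S$ together with existence and uniqueness. It never confronts the issue you isolate: $S$ is an unbounded affine subspace, the Jacobian entries are affine in $(\va,\vb)$, and the cubic term $\kappa(\vx^T\vy)\vx$ genuinely grows cubically along rays in $S$ (take $\vx = \ones/N + t\vw$, $\vy = \ones/N - t\vw$ with $\vw\perp\ones$), so no single finite $M$ can work on all of $S$. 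Your route --- local Lipschitz from smoothness, forward invariance of $\Sigma\times\Sigma$ via the identity $\frac{d}{dt}(\vx^T\ones)=\kappa(\vx^T\vy)(\vx^T\ones-1)$ together with the boundary check $\dot x_i=[\mQ^T\vx]_i\geq 0$ on $\{x_i=0\}$ (a standard Nagumo/quasi-positivity argument), then a uniform Jacobian bound on that compact convex set and global existence because trajectories cannot leave it --- is exactly the patch the statement needs. It also delivers precisely what Theorem \ref{fluid limit} uses: there $M$ enters a Gronwall bound comparing the ODE with the stochastic process, and both of these live in $\Sigma\times\Sigma$ (the process by construction, the ODE by your invariance argument). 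What the paper's shorter route buys is brevity; what yours buys is a true statement and global-in-time solutions. The only caveat is that you prove existence, uniqueness, and a uniform $M$ for initial data in $\Sigma\times\Sigma$ rather than on all of $S$ as literally claimed; but the full-$S$ claim with a single constant is not available anyway, and assumption (A1) places all initial points of interest inside the simplex, so your restricted version is the honest form of the proposition.

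Two further technical remarks. First, the equality form of the mean value theorem used in the paper is valid only for scalar-valued maps; for vector fields one needs the mean value \emph{inequality} $\|F(p)-F(q)\|\leq \sup_{c}\|\J_F(c)\|\,\|p-q\|$ over the segment, which is the form you use. Second, your Jacobian is the correct one: differentiating $\kappa(\vx^T\vy)\vx$ in $\vx$ produces $\kappa\vx\vy^T+\kappa(\vx^T\vy)\eye$, so $\partial F_x/\partial\vx = \mQ^T+\kappa\vx\vy^T+\kappa(\vx^T\vy)\eye-\kappa\mD_\vy$, whereas the paper's displayed Jacobian contains the diagonal term $\mD_\va\mD_\vb$ where $(\va^T\vb)\eye$ should appear. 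This slip does not affect the qualitative boundedness argument --- both expressions are affine in $(\va,\vb)$ and bounded on bounded sets --- but yours is the computation to trust.
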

The set $S$ is an invariant set for the ODE system \eqref{ODE F}. Indeed, observe that for any $(\vx,\vy \in S)$ we have $d(\ones^T\vx)/dt = \ones^T F(\vx,\vy) = \ones^T \mQ^T \vx + \kappa \vx^T \vy - \kappa\vy^T\vx = 0$ since $\mQ\ones = 0$. Similarly, $d(\ones^T\vy)/dt = 0$, and any solution starting in $S$ stays in $S$. We will only consider such solutions in our analysis of the ODE. To ensure that this is the case, we rewrite \eqref{ODE F} as an equivalent system given by
\begin{equation}
\begin{split}
\frac{d}{dt} \vx(t) = \mQ^T\vx(t) + \Lambda_x(t)\vx(t) - \kappa \mD_{\vy(t)} \vx(t) \\
\frac{d}{dt} \vy(t) = \mQ^T\vy(t) + \Lambda_y(t)\vy(t) - \kappa \mD_{\vx(t)} \vy(t) \\
\vx(t)^T\ones = \vy(t)^T \ones = 1 \ \big( \forall t\geq 0 \big),
\end{split}
\label{ODE XY}
\end{equation}
where $\Lambda_x(t)$ and $\Lambda_y(t)$ are real valued scalars. By summing up all the entries of $\frac{d}{dt} \vx(t)$ and $\frac{d}{dt} \vx(t)$ and substituting $\vx(t)^T\ones = \vy(t)^T \ones = 1$, it can be easily seen that $\Lambda_x(t) = \Lambda_y(t) = \kappa \vx(t)^T \vy(t)$. Thus, we retrieve the original system and hence the equivalence. Moving forwards, we will use $\Lambda(t) \triangleq \Lambda_x(t) = \Lambda_y(t) = \kappa \vx(t)^T \vy(t)$ in our equations. Before proceeding with our first important result, we make an observation about fixed points $(\vx^*, \vy^*)$ of \eqref{ODE XY}.
\begin{remark}
Any fixed point $(\vx^*,\vy^*) \in S$ of \eqref{ODE XY} has all strictly positive entries. In other words, there exists no $i \in \cN$ such that $x^*_i = 0$ or $y^*_i= 0$.
\label{Remark}
\end{remark}
\begin{proof} (Remark \ref{Remark})
Consider $(\vx^*,\vy^*) \in S$ to be fixed points of \eqref{ODE XY}. Then, the $i^{\text{th}}$ entry of $\vx^*$ satisfies the equation
$$0 = \sum_{j \in \cN} Q_{ji}x^*_j + [\kappa {\vx^*}^T \vy^*]x_i - \kappa x^*_i y^*_i.$$
Suppose $x^*_i = 0$. Then the above equation becomes
$0 = \sum_{j \in \cN,~ j \neq i} Q_{ji}x^*_j.$
This means that for any $j \in \cN$ such that $Q_{ji} > 0$ (implying that node $j$ is a neighbor of node $i$), the corresponding entry $x^*_j = 0$. This in turn leads to $x^*_k = 0$, for all neighbors $k$ of node $j$. Since our graph is connected, there is always a path connecting node $i$ to any other node of the graph, implying $\vx = \0$. Similarly, if $y^*_i = 0$, following the same steps as before gives us $\vy^* = \0$. This is in violation of the third equation in \eqref{ODE XY}, giving us a contradiction. This completes the proof.
\end{proof}


We are now ready to state the main result connecting the stochastic process from Section \ref{construction} and the deterministic ODE system from Section \ref{ODE intro}.

\subsection{From stochastic to deterministic dynamics} \label{stoch-deter main}
In this section we show that the stochastic process from Section \ref{construction}, indexed by $n$,  almost surely converges to the deterministic ODE system from Section \ref{ODE intro}. Before we state the theorem, we make the following assumption.
\begin{itemize}
\item [\textbf{A1:}] For any $n \in \N$, $\big( \vx(0), \vy(0) \big) = \big( \vx^n(0), \vy^n(0) \big)$.
\end{itemize}

\begin{theorem} (Proof in Appendix \ref{stochastic-deterministic}.)
Consider the family of stochastic processes (indexed by $n$) $\big\{ \big( \vx^n(t),\vy^n(t) \big)_{t\geq 0} \big\}_{n \in \N}$, with kernels $\big(\bar \mQ^n\big)_{n\in\N}$ defined as in \eqref{rate x} and \eqref{rate y}. Let $\big(\vx(t),\vy(t)\big)$ be solutions to the ODE system \eqref{ODE F} that satisfy (A1). Then, for all $T \in (0, \infty)$, we have
\begin{equation}
\lim_{n\rightarrow\infty} \sup_{0 \leq t \leq T} \| \big(\vx^n(t), \vy^n(t) \big) - \big(\vx(t), \vy(t) \big) \| = 0 \ \ \ \ \text{a.s.}
\label{convergence a.s.}
\end{equation}
More precisely, for any $\epsilon > 0$ and $ T \in (0,\infty)$, we have
\begin{equation}
\begin{split}
\mP&\Big( \sup_{0 \leq t \leq T} \big\|   \big( \vx^n(t), \vy^n(t) \big) - \big( \vx(t), \vy(t) \big)  \big \| \geq \epsilon\  \Big) \\
&\leq 4N(N-1)\exp\Bigg({-n(1+\kappa)T \cdot h\Big( \frac{\epsilon e^{-MT}}{\sqrt{2} N (N-1) (1+\kappa) T} \Big)}\Bigg),
\end{split}
\label{Prob bound}
\end{equation}
where $h(x) \triangleq (1+x)\log(1+x) - x$, $M$ is the Lipschitz constant from Proposition \ref{Lipschitz prop}, and $N = |\cN|$ is the size of the graph.
\label{fluid limit}
\end{theorem}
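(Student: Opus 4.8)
The plan is to establish the concentration bound \eqref{Prob bound} directly; the almost sure statement \eqref{convergence a.s.} then follows by Borel--Cantelli, since for fixed $\epsilon$ and $T$ the right-hand side of \eqref{Prob bound} has the form $4N(N-1)e^{-cn}$ with $c>0$ \emph{independent of $n$} (the argument of $h$ does not depend on $n$, and $h(x)>0$ for $x>0$), hence is summable. The backbone is Kurtz's random time-change representation for density-dependent jump processes. First I would write each scaled process as a superposition of independent unit-rate Poisson processes run along internal clocks: for every ordered pair $(i,j)$ with $i\neq j$ and each type introduce an independent unit-rate Poisson process and set
\begin{equation}
\vx^n(t) = \vx^n(0) + \sum_{i\neq j}\frac{\ve_i-\ve_j}{n}\,P^x_{ji}\!\left(\int_0^t \bar Q^n_{x:j\to i}\big(X^n(s),Y^n(s)\big)\,ds\right),
\end{equation}
and likewise for $\vy^n$. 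Subtracting the compensators and invoking Proposition \ref{Prop Simplify}, which identifies $\tfrac1n\sum_{i\neq j}(\ve_i-\ve_j)\bar Q^n_{x:j\to i}$ with $F_x(\vx^n,\vy^n)$, yields the semimartingale decomposition $\vx^n(t) = \vx^n(0) + \int_0^t F_x(\vx^n(s),\vy^n(s))\,ds + \mE^n_x(t)$, where $\mE^n_x$ is the sum of the centered (hence martingale) time-changed Poisson terms, and analogously for $\vy^n$.

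Next, using the integral form $\vx(t)=\vx(0)+\int_0^t F_x(\vx(s),\vy(s))\,ds$ of the ODE \eqref{ODE F}, assumption (A1) to cancel the initial conditions, and the Lipschitz property of $F$ from Proposition \ref{Lipschitz prop}, I would subtract and take norms to obtain, with $\Delta^n(t)\triangleq\|(\vx^n(t),\vy^n(t))-(\vx(t),\vy(t))\|$,
\begin{equation}
\Delta^n(t) \leq \|\mE^n(t)\| + M\int_0^t \Delta^n(s)\,ds.
\end{equation}
Gronwall's inequality then gives $\sup_{0\le t\le T}\Delta^n(t)\le e^{MT}\sup_{0\le t\le T}\|\mE^n(t)\|$, so the event in \eqref{Prob bound} is contained in $\{\sup_{t\le T}\|\mE^n(t)\|\ge \epsilon e^{-MT}\}$. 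This reduces the problem to a uniform-in-time tail bound on the martingale part $\mE^n$, and explains the factor $e^{-MT}$ inside $h$.

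The crux is the concentration of $\mE^n$, which is where $h$ enters. Since $\mE^n$ is a finite sum of $2N(N-1)$ centered, time-changed unit-rate Poisson processes scaled by $1/n$, a triangle-inequality and union-bound argument reduces $\{\sup_{t\le T}\|\mE^n(t)\|\ge \epsilon e^{-MT}\}$ to the event that some single centered clock $\tilde P_k(u)=P_k(u)-u$ exceeds a threshold $m$ of order $n\epsilon e^{-MT}/(N(N-1))$ over its running time. Because the off-diagonal entries of $\mQ$ are $0$ or $1$ and the densities lie in the simplex $\Sigma$, each internal clock satisfies $\int_0^t\bar Q^n\,ds\le n(1+\kappa)T$; this deterministic bound on the horizon is the source of the prefactor $n(1+\kappa)T$ multiplying $h$. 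I would then apply the exponential maximal inequality for centered Poisson processes, obtained from the martingale $\exp\!\big(\theta\tilde P_k(u)-u(e^\theta-1-\theta)\big)$ via Doob's inequality and optimization over $\theta>0$, which gives, for a unit-rate Poisson process run for time $\tau$, the two-sided bound $\mP(\sup_{u\le\tau}|\tilde P_k(u)|\ge m)\le 2\exp(-\tau\, h(m/\tau))$ with precisely $h(x)=(1+x)\log(1+x)-x$. Taking $\tau=n(1+\kappa)T$, substituting the threshold $m$, and union-bounding over the $2N(N-1)$ clocks (the additional factor $2$ for the two-sided tail producing the overall prefactor $4N(N-1)$) assembles \eqref{Prob bound}; the residual numerical factors $\sqrt2$ and $N(N-1)$ in the argument of $h$ arise from this bookkeeping (the jump size $\|\ve_i-\ve_j\|=\sqrt2$ and the splitting of the Euclidean norm across the two types and the $N(N-1)$ directed pairs).

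I expect the main obstacle to be the uniform-in-time control of the martingale term rather than the drift term. The drift is handled cleanly by Lipschitz continuity and Gronwall, but obtaining the \emph{sharp} exponential rate $h$ requires the Poisson exponential-martingale maximal inequality applied to the time-changed clocks, and care is needed because these clocks are state-dependent: one must bound their running times deterministically and uniformly over the state space $\Theta_n$ before invoking the maximal inequality, which is exactly what the $0/1$ structure of the off-diagonal $\mQ$ entries and the simplex constraint $\vx,\vy\in\Sigma$ supply.
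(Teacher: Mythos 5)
Your proposal is correct and follows essentially the same route as the paper's proof: Kurtz's random time-change representation with independent unit-rate Poisson clocks, the deterministic bound $\bar Q^n \le n(1+\kappa)$ on the clock running times, cancellation of initial conditions via (A1), Gronwall's inequality producing the $e^{-MT}$ factor, a union bound over the $2N(N-1)$ directed pairs and two types, and Borel--Cantelli for the almost sure statement. The only cosmetic difference is that the paper cites the two-sided Poisson maximal inequality $\mP\big(\sup_{u\le\tau}|\tilde P(u)|\ge m\big)\le 2\exp\big(-\tau\, h(m/\tau)\big)$ from the literature (Proposition 5.2 in \cite{Draief}), whereas you sketch its standard derivation via the exponential martingale and Doob's inequality --- the same key lemma either way.
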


\eqref{Prob bound} gives an upper bound on the probability that, uniformly over a finite time horizon $[0,T]$, the stochastic process indexed by $n\in \N$ deviates from the solution to the deterministic process by at least $\epsilon>0$, provided they start at the same initial point.
A discussion on the effect of parameters $n$, $\kappa$ and $N$ on the bound \eqref{Prob bound} and the long run behavior (as $T \to \infty$) is deferred to the end of Section \ref{deterministic-stochastic}.

\section{Analysis of the deterministic flow and convergence to FV} \label{deterministic section}

From Theorem \ref{fluid limit}, the solutions $\big( \vx(t),\vy(t)  \big)_{t \geq 0}$ of the ODE system \eqref{ODE XY} serve as a deterministic approximation for the CTMC $\big( \vx^n(t),\vy^n(t)  \big)_{t \geq 0}$ for larger values of $n\in\N$. Then, it makes sense to analyze the trajectories of the ODE system for its convergence properties and the nature of its fixed points. For the rest of the paper, we use the notation $\Lambda(t) = \Lambda_x(t) = \Lambda_y(t)$ mentioned earlier.

\subsection{Fixed points of the system} \label{fixed-pts}

We first state a consequence of the Courant-Fischer min-max theorem~\cite{Horn,Carl2000} (specific to our case), which we shall refer to later in this Section.
\begin{lemma} Let $\tilde S \triangleq \{ \vw \in \R^N \ \big| \ \vw^T\ones = 0, \| \vw \| = 1\}$. Given a CTMC kernel $\mQ = -\mL$, where $\mL$ is a Laplacian matrix with eigenvalues ordered as $0=\lambda_1 < \lambda_2 \leq \cdots \leq \lambda_N$, we have
\begin{equation}
\lambda_2 = \min_{\vu \in \tilde S} \vu^T[-\mQ]^T\vu,
\label{v2}
\end{equation}
with $\tilde \vecv_2 = \vecv_2/\|\vecv_2 \|$ being the minimizer, and
\begin{equation}
\lambda_k = \max_{\vu \in \tilde S, ~\vu \perp \{\vecv_{k+1}, \cdots, \vecv_N \}} \vu^T[-\mQ]^T\vu,
\label{vk}
\end{equation}
with $\tilde \vecv_k = \vecv_k/\|\vecv_k \|$ being the maximizer.
\label{Rayleigh lem}
\end{lemma}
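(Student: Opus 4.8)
The plan is to recognize this statement as the Courant--Fischer / Rayleigh-quotient characterization specialized to the symmetric matrix $-\mQ = \mL$. The first observation I would make is that since $\mQ = -\mL$ with $\mL$ symmetric, we have $[-\mQ]^T = \mL^T = \mL$, so the quadratic form $\vu^T[-\mQ]^T\vu$ is simply $\vu^T\mL\vu$, i.e. the Rayleigh quotient of $\mL$ evaluated on unit vectors $\vu \in \tilde S$. Because $\mL$ is real symmetric, the spectral theorem furnishes an orthonormal eigenbasis $\{\tilde\vecv_1,\dots,\tilde\vecv_N\}$ with $\tilde\vecv_i = \vecv_i/\|\vecv_i\|$, $\mL\tilde\vecv_i = \lambda_i\tilde\vecv_i$, and $\tilde\vecv_1 = \ones/\|\ones\|$. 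The key reduction is then to note that the linear constraint defining $\tilde S$ is precisely orthogonality to this first eigenvector: $\vw^T\ones = 0$ holds if and only if $\vw \perp \tilde\vecv_1$. Optimizing over $\tilde S$ is therefore the same as optimizing the Rayleigh quotient over unit vectors in $\mathrm{span}\{\tilde\vecv_2,\dots,\tilde\vecv_N\}$.

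For the first identity \eqref{v2}, I would expand any $\vu \in \tilde S$ in the eigenbasis as $\vu = \sum_{i=2}^N c_i \tilde\vecv_i$, so that orthonormality gives $\sum_{i=2}^N c_i^2 = \|\vu\|^2 = 1$. Then $\vu^T\mL\vu = \sum_{i=2}^N \lambda_i c_i^2 \geq \lambda_2 \sum_{i=2}^N c_i^2 = \lambda_2$, using the ordering $\lambda_2 \leq \lambda_i$ for all $i \geq 2$. Equality is attained exactly when all the weight concentrates on the $\lambda_2$-coordinate, i.e. at $\vu = \tilde\vecv_2$, which simultaneously establishes the value $\lambda_2$ and identifies the minimizer.

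For the general identity \eqref{vk}, the extra constraint $\vu \perp \{\vecv_{k+1},\dots,\vecv_N\}$ together with $\vu \in \tilde S$ confines $\vu$ to $\mathrm{span}\{\tilde\vecv_2,\dots,\tilde\vecv_k\}$. Writing $\vu = \sum_{i=2}^k c_i\tilde\vecv_i$ with $\sum_{i=2}^k c_i^2 = 1$, the same expansion yields $\vu^T\mL\vu = \sum_{i=2}^k \lambda_i c_i^2 \leq \lambda_k \sum_{i=2}^k c_i^2 = \lambda_k$, now using $\lambda_i \leq \lambda_k$ for $i \leq k$, with equality at $\vu = \tilde\vecv_k$. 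This gives the maximum value and the maximizer.

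I do not expect a serious obstacle; the argument is a direct application of the spectral theorem followed by a weighted-average bound. The only points needing minor care are (i) justifying the \emph{orthonormal} eigenbasis when some of $\lambda_2,\dots,\lambda_N$ coincide, which is handled by choosing an orthonormal basis within each eigenspace, and (ii) the claim that $\tilde\vecv_2$ (resp. $\tilde\vecv_k$) is \emph{the} optimizer, which as stated implicitly relies on the relevant eigenvalue being simple; when eigenvalues coincide the stated vector remains an optimizer but is no longer the unique one.
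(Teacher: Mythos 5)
Your proof is correct. The paper itself does not write out a proof of this lemma at all: it simply states the result as ``a consequence of the Courant--Fischer min-max theorem'' and cites standard matrix-analysis references. What you have done is different in a useful way: rather than invoking Courant--Fischer as a black box, you exploit the special structure of the constraints --- namely, that orthogonality is imposed against actual eigenvectors of $\mL$ ($\ones = \vecv_1$ for \eqref{v2}, and $\vecv_{k+1},\dots,\vecv_N$ for \eqref{vk}) --- so the full min-max machinery over arbitrary subspaces collapses to a direct eigenbasis expansion $\vu = \sum_i c_i \tilde\vecv_i$ plus a weighted-average bound. This buys a short, self-contained argument requiring only the spectral theorem, and it makes transparent exactly where each ordering inequality $\lambda_2 \le \lambda_i$ (resp.\ $\lambda_i \le \lambda_k$) enters. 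Your closing caveat is also apt: when $\lambda_2$ (or $\lambda_k$) is not simple, $\tilde\vecv_2$ (or $\tilde\vecv_k$) is \emph{an} optimizer rather than \emph{the} optimizer, which is a minor imprecision in the lemma as stated rather than a gap in your argument; the paper's later use of the lemma (identifying stable and unstable fixed points in Theorem \ref{convergence to FV}) is unaffected by this distinction.
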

Let $\vz(t) \triangleq \vx(t) - \vy(t)$ for all $t \geq 0$, and consider an (implicit) system of equations obtained by subtracting the second equation of \eqref{ODE XY} from the first one, which then reads as
\begin{equation}
\begin{split}
\frac{d}{dt} \vz(t) = \mQ^T\vz(t) + \Lambda(t)\vz(t) \\
\vz(t)^T\ones = 0 \ \big( \forall t\geq 0 \big).
\end{split}
\label{ODE Z}
\end{equation}
Let $\Omega$ denote the set of fixed points of \eqref{ODE Z}. Any fixed point $\vz^* \in \Omega$ satisfies
\begin{align}
-\mQ^T\vz^* &= \mL^T \vz^* = \Lambda^* \vz^*
\label{fixed point eqn} \\
{\vz^*}^T \ones &= 0
\end{align}
for some $\Lambda^* \in (0,\infty)$, and could therefore be a (left) eigenvector of $\mL$ (up to a scalar multiple), or the \textit{zero} vector (which we shall often refer to as the \textit{origin}).
Since, for any $t \geq 0$, $\vz(t) = \0$ if and only if $\vx(t) = \vy(t)$, the fixed point $\0 \in \Omega$ of \eqref{ODE Z} corresponds to the invariant set $S_0$ of  \eqref{ODE XY}, defined as
\begin{equation}
S_0 \triangleq \{(\vx,\vy) \in S\ |\ \vx = \vy, \ \vx^T = \vy^T = 1 \}.
\label{S_0}
\end{equation}
To exclude, from our analysis, the case where trajectories of \eqref{ODE Z} might hit \textit{zero} after some finite time, or equivalently trajectories of \eqref{ODE XY} might enter set $S_0$ and stay there for all future times, we show the following result.
\begin{proposition}
For all sufficiently large $\kappa$, the invariant set $S_0$ defined in \eqref{S_0} is an unstable set for the system \eqref{ODE XY}.
\label{instability x=y}
\end{proposition}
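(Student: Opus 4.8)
The plan is to certify instability of $S_0$ through the transverse variable $\vz(t)=\vx(t)-\vy(t)$, whose dynamics are governed by the (time-inhomogeneous, but linear in $\vz$) system \eqref{ODE Z}. The set $S_0$ is exactly $\{\vz=\0\}$, and since for any $(\vx,\vy)\in S$ the nearest point of $S_0$ is $\big(\tfrac{\vx+\vy}{2},\tfrac{\vx+\vy}{2}\big)$ (which lies in $S$ because $\ones^T(\vx+\vy)/2=1$), a one-line computation gives $\mathrm{dist}\big((\vx,\vy),S_0\big)=\|\vz\|/\sqrt{2}$. Thus escaping $S_0$ is equivalent to $\|\vz\|$ growing, and I would use $V(t)\triangleq\|\vz(t)\|^2$ as a \emph{Chetaev-type} function: it vanishes on $S_0$, is positive off it, and I will show it strictly increases in a punctured neighborhood of $S_0$ once $\kappa$ is large.

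Differentiating $V$ along \eqref{ODE Z} and using $\mQ=-\mL$ with $\mL$ symmetric (so $\mQ^T=-\mL$),
\begin{equation}
\frac{d}{dt}\|\vz(t)\|^2 = 2\,\vz(t)^T\big(-\mL+\Lambda(t)\eye\big)\vz(t) = -2\,\vz(t)^T\mL\vz(t) + 2\Lambda(t)\|\vz(t)\|^2 .
\end{equation}
Since $\vz(t)^T\ones=0$, the $k=N$ case of the Rayleigh bound \eqref{vk} in Lemma \ref{Rayleigh lem} gives $\vz^T\mL\vz\le\lambda_N\|\vz\|^2$, whence
\begin{equation}
\frac{d}{dt}\|\vz(t)\|^2 \ge 2\big(\Lambda(t)-\lambda_N\big)\|\vz(t)\|^2 .
\end{equation}
Everything then hinges on a lower bound for $\Lambda(t)=\kappa\,\vx(t)^T\vy(t)$ valid near $S_0$.

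For that bound I would write $\vx^T\vy=\tfrac12\big(\|\vx\|^2+\|\vy\|^2-\|\vz\|^2\big)$ and apply Cauchy--Schwarz against $\ones$: the constraint $\vx^T\ones=\vy^T\ones=1$ forces $\|\vx\|^2\ge 1/N$ and $\|\vy\|^2\ge 1/N$ (no nonnegativity required), so $\Lambda(t)\ge \tfrac{\kappa}{N}-\tfrac{\kappa}{2}\|\vz(t)\|^2$. Substituting,
\begin{equation}
\frac{d}{dt}\|\vz(t)\|^2 \ge 2\Big(\frac{\kappa}{N}-\lambda_N\Big)\|\vz(t)\|^2 - \kappa\,\|\vz(t)\|^4 .
\end{equation}
Taking $\kappa>N\lambda_N$ makes $a\triangleq\tfrac{\kappa}{N}-\lambda_N>0$, so on the punctured ball $\{0<\|\vz\|^2<2a/\kappa\}$ the right-hand side is strictly positive; hence any trajectory launched with $\vz(0)$ small and nonzero has $\|\vz(t)\|^2$ strictly increasing (indeed at least exponentially fast until it exits the ball), i.e.\ it moves a fixed distance off $S_0$. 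This is exactly instability, with explicit threshold $\kappa>N\lambda_N$. Since instability only requires one escaping trajectory, I would additionally note that starting with $\vz(0)$ parallel to $\vecv_2$ keeps $\vz(t)$ parallel to $\vecv_2$ (as $\vecv_2$ is an eigenvector of $-\mL$ and $\Lambda(t)\eye$ acts as a scalar), replacing $\lambda_N$ by $\lambda_2$ and sharpening the threshold to $\kappa>N\lambda_2$.

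The main obstacle is the uniform lower bound on $\Lambda(t)$: for arbitrary $\vx,\vy\in S$ the cross term $\vx^T\vy$ is only guaranteed nonnegative, so the estimate is usable only because we restrict to a neighborhood of $S_0$ where $\|\vz\|$ is small while $\|\vx\|^2,\|\vy\|^2$ stay $\ge 1/N$. The rest is bookkeeping — converting ``$V$ strictly increasing on a punctured neighborhood'' into the formal definition of an unstable invariant set via the Chetaev argument and the distance identity $\mathrm{dist}((\vx,\vy),S_0)=\|\vz\|/\sqrt2$.
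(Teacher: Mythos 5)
Your proof is correct, but it takes a genuinely different route from the paper's. The paper argues by linearization: it computes the Jacobian $\J_F(\vx,\vx)$ at points of $S_0$, splits it into three symmetric matrices, and applies Weyl's inequality twice to obtain $\lambda_{N-1}\big(\J_F(\vx,\vx)\big) \geq -\lambda_N(\mL) + \kappa \min_{i\in\cN} x_i^2$, which is positive for large $\kappa$; the unstable space then has dimension exceeding that of $S_0$, giving linear instability of the set. You instead run a nonlinear, Chetaev-type argument directly on the transverse variable: along \eqref{ODE Z}, $\frac{d}{dt}\|\vz\|^2 = -2\vz^T\mL\vz + 2\Lambda(t)\|\vz\|^2$, and combining the Rayleigh bound $\vz^T\mL\vz \leq \lambda_N\|\vz\|^2$ (the $k=N$ case of Lemma~\ref{Rayleigh lem}) with the polarization identity and the Cauchy--Schwarz bounds $\|\vx\|^2,\|\vy\|^2 \geq 1/N$ (valid on all of $S$, with no nonnegativity assumption) yields $\frac{d}{dt}\|\vz\|^2 \geq 2(\kappa/N - \lambda_N)\|\vz\|^2 - \kappa\|\vz\|^4$, so that for $\kappa > N\lambda_N$ every trajectory starting off $S_0$ but near it exits a fixed neighborhood of $S_0$ in finite time. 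The trade-offs are real: the paper's proof exposes the local spectral structure of the linearized dynamics, but its threshold for ``sufficiently large $\kappa$'' is implicit and point-dependent (it involves $\min_i x_i^2$, which is not bounded below uniformly over $S_0$, and Remark~\ref{Remark} strictly concerns fixed points while $S_0$ also contains non-equilibrium points, where Jacobian-eigenvalue instability criteria need extra justification); your argument treats the entire invariant set at once, produces an explicit uniform threshold $\kappa > N\lambda_N$ --- sharpened to $\kappa > N\lambda_2$ via the observation that the span of $\vecv_2$ is invariant for \eqref{ODE Z}, since the coefficients $c_k(t) = \vz(t)^T\tilde\vecv_k$ satisfy decoupled scalar equations --- and gives a quantitative exponential escape rate. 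One piece of bookkeeping to make explicit when writing this up: the strict growth holds only on $\{0 < \|\vz\|^2 < 2a/\kappa\}$ with $a \triangleq \kappa/N - \lambda_N$, and the lower bound degenerates at the boundary, so the escape claim should be phrased as exit from $\{\|\vz\|^2 \leq c\}$ for a fixed $c < 2a/\kappa$ (say $c = a/\kappa$, where Gronwall gives $\|\vz(t)\|^2 \geq \|\vz(0)\|^2 e^{at}$); this suffices for instability of the set, since via your distance identity $\mathrm{dist}\big((\vx,\vy),S_0\big) = \|\vz\|/\sqrt{2}$ it produces trajectories starting arbitrarily close to $S_0$ that reach a fixed positive distance from it.
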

\begin{proof}
Recall the form of the Jacobian of $F:\R^{2N} \rightarrow \R^{2N}$ from Appendix A.2. When evaluated at $\vx = \vy$ (which we shall use here in place of $\vx^*$ and $\vy^*$ to denote fixed points), we can write it as
\begin{equation*}
\J_F(\vx,\vx) =
\begin{bmatrix}
\mQ	& 0\\
0	& \mQ
\end{bmatrix}
+
\kappa \begin{bmatrix}
    \vx \vx^T + \mD_\vx\mD_\vx - \mD_\vx      & \vx \vx^T - \mD_\vx \\
    \vx \vx^T - \mD_\vx       & \vx \vx^T + \mD_\vx\mD_\vx-\mD_\vx
\end{bmatrix}
\end{equation*}

\begin{equation*} =
\begin{bmatrix}
\mQ^T	& 0\\
0	& \mQ^T
\end{bmatrix}
+\kappa
\begin{bmatrix}
\mD_\vx^2	& 0\\
0	& \mD_\vx^2
\end{bmatrix}
+ \kappa
\begin{bmatrix}
1 &	1 \\
1 & 1
\end{bmatrix}
\otimes
\begin{bmatrix}
\vx \vx^T - \mD_{\vx}
\end{bmatrix},
\end{equation*}
Here, $\mD \otimes \mE$ denotes the Kronecker product of two square matrices $\mD$ and $\mE$. To simplify further analysis, we write down the above matrix as $\J_F(\vx,\vx) = \mA + \mB + \mC$, where
\[ \mA =
\begin{bmatrix}
\mQ^T	& 0\\
0	& \mQ^T
\end{bmatrix}, \ \ 
\mB =
 \kappa
\begin{bmatrix}
\mD_\vx^2	& 0\\
0	& \mD_\vx^2
\end{bmatrix}
\]
\[ \mC =
 \kappa
\begin{bmatrix}
1 & 1 \\
1 & 1
\end{bmatrix}
\otimes
\begin{bmatrix}
\vx \vx^T - \mD_{\vx}
\end{bmatrix}
\]
For any symmetric matrix $\M\in\R^{N\times N}$, let $\lambda_1(\M) \leq \lambda_2(\M) \leq \cdots \leq \lambda_N(\M)$ denote the ordering of its eigenvalues. Observe that the eigenvalues of $\mA$ are the same as those of $\mQ$ with double the multiplicity, implying $\lambda_{2N}(\mA)=0$, and $\lambda_{2k}(\mA)=\lambda_k(\mQ)$ for all $k \in \{1,\cdots ,N \}$. The eigenvalues of $\mB$ are the diagonal elements. Thus, $\lambda_1(B) = \kappa \min_{i \in \cN} x_i^2$ and $\lambda_{2N}(B) = \kappa \max_{i \in \cN} x_i^2$. 

For matrix $\mC$, observe that  $\vx\vx^T - \mD_\vx$ has a \textit{zero} row sum, with negative diagonal entries and non-negative off-diagonal entries. It therefore defines a CTMC transition rate matrix, and we have $\lambda_{2N}(\vx\vx^T - \mD_\vx) = 0$, with the other eigenvalues being strictly negative. It can also be checked that the matrix of all ones on the left of the Kronecker product has the spectrum $\{2,0\}$. The eigenvalues of the Kronecker product are equal to eigenvalues of the two involved matrices cross multiplied. We therefore obtain $2N$ eigenvalues of $\mC$, with $\lambda_{2N}(\mC) = \cdots = \lambda_{N-1}(\mC) = 0$, and the others being strictly negative.

The Weyl's inequality\cite{Horn, Carl2000} for real symmetric matrices $\mD,\mE \in \R^{N\times N}$ is given, for any $k \in \{1,\cdots,N\}$, as
\begin{equation*}
\lambda_1(\mD) + \lambda_k(\mE) \leq \lambda_k(\mD+\mE) \leq \lambda_N(\mD) + \lambda_k(\mE)
\end{equation*}
Applying the lower bound of Weyl's inequality on $\lambda_{N-1}(\mA + \mB + \mC)$, we get
\begin{equation*}
\lambda_1(\mA ) + \lambda_{N-1}( \mB + \mC) \leq \lambda_{N-1}(\mA + \mB + \mC).
\end{equation*}
Applying the lower bound of Weyl's inequality once again, this time on $\lambda_{N-1}( \mB + \mC)$, we get
\begin{equation*}
\lambda_1(\mA ) + \lambda_1(\mB ) + \lambda_{N-1}(\mC) \leq \lambda_{N-1}(\mA + \mB + \mC).
\end{equation*}
Now since $\lambda_1(\mA ) = \lambda_1(\mQ) = -\lambda_N(\mL)$, $\lambda_{N-1}(\mC) = 0$ and $\lambda_1(\mB )= \kappa \min_{i \in \cN} x_i^2$, we get
$$-\lambda_N(\mL)+ \kappa \min_{i \in \cN} x_i^2 \leq \lambda_{N-1} (\J_F(\vx,\vx)).$$
As a consequence of Remark ~\ref{Remark} in Section \ref{construction}, $x_i > 0$ for every $i \in \cN$. This means the $N+1$ largest eigenvalues of the Jacobian become positive for all sufficiently large $\kappa>0$, and each fixed point in $S_0$ has an unstable space\footnote{An unstable space of a point is the set where trajectories move away from the point. For individual fixed points, it is usually enough to show that this unstable space is at least $1$-dimensional to guarantee instability. This can be done by showing that one of the eigenvalues of the Jacobian evaluated at the point is positive (for linear instability)\cite{Perko2001}. However when showing instability of all points associated to an invariant, $k$-dimensional set given by $K$, we need to show that the unstable space of these points is of dimension at least $k+1$, in order to rule out the unstable spaces made of vectors pointing simply inside set $K$. Showing the existence of an unstable space of dimension $k+1$ hence shows instability of the set $K$ itself.}
of minimum dimension $N+1$ associated with the linearized system at the point \cite{Perko2001}. Therefore the set $S_0$ is linearly unstable in $S$. This concludes the proof.
\end{proof}
The above result is equivalent to saying that the fixed point $\0\in\Omega$  
of \eqref{ODE Z} is unstable. This allows us to consider trajectories starting from $\big( \vx(0), \vy(0) \big) \in S \setminus S_0$, or equivalently, from $\vz(0) \neq 0$, which will be useful in Section \ref{convergence section}.

\subsection{Convergence to the Fiedler vector} \label{convergence section}

From Section \ref{fixed-pts}, we know that the set $\Omega$ of fixed points of \eqref{ODE Z} contains any scalar multiples $c \vecv_k$ of the (left) eigenvectors $\vecv_k$ of $\mL$ for $k \geq 2$. This includes $\0$, which we showed to be an unstable fixed point in Proposition \ref{instability x=y}. The next question is about the convergence properties of the ODE system, and we have the following. 
\begin{theorem}
Trajectories of the system \eqref{ODE Z} always converge to a fixed point. Furthermore, the Fiedler vector $\vecv_2$ is an asymptotically stable fixed point of the ODE system (\ref{ODE Z}), with all others being unstable.
\label{convergence to FV}
\end{theorem}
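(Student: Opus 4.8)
The plan is to reduce the coupled, non-autonomous system \eqref{ODE Z} to an autonomous gradient flow of the Rayleigh quotient by passing to the normalized direction of $\vz$. First I would note that since \eqref{ODE Z} is linear in $\vz$ for any prescribed path $\Lambda(t)$, uniqueness of solutions guarantees that a trajectory with $\vz(0)\neq\0$ never reaches the origin in finite time; combined with Proposition \ref{instability x=y} (instability of $S_0$, i.e. of $\0\in\Omega$) this lets me restrict attention to $\vz(t)\neq\0$ and work with $\tilde\vz(t)\triangleq\vz(t)/\|\vz(t)\|$. The crucial observation is that, writing $R(\vu)\triangleq\vu^T\mL\vu=\vu^T[-\mQ]^T\vu$ for the Rayleigh quotient, differentiating $\tilde\vz$ and substituting \eqref{ODE Z} makes the troublesome coupling term $\Lambda(t)\vz$ cancel, leaving the autonomous system
\begin{equation*}
\tfrac{d}{dt}\tilde\vz = -\mL\tilde\vz + \big(\tilde\vz^T\mL\tilde\vz\big)\tilde\vz = -(\eye-\tilde\vz\tilde\vz^T)\mL\tilde\vz.
\end{equation*}
This is exactly the projected gradient flow of $R$ on the unit sphere, and since $\mL\ones=\0$ with $\mL$ symmetric the constraint $\tilde\vz^T\ones=0$ is preserved, so the flow lives on $\tilde S$ from Lemma \ref{Rayleigh lem}. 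Decoupling the direction $\tilde\vz$ from its magnitude (and hence from $\Lambda(t)$ and the companion variable $\vx+\vy$) in this way is the key simplification.

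Next I would use $R$ itself as a Lyapunov function. A direct computation gives $\frac{d}{dt}R(\tilde\vz)=-2\big(\|\mL\tilde\vz\|^2-(\tilde\vz^T\mL\tilde\vz)^2\big)$, which is $\leq 0$ by Cauchy--Schwarz (using $\|\tilde\vz\|=1$), with equality precisely when $\mL\tilde\vz$ is parallel to $\tilde\vz$, i.e. when $\tilde\vz$ is an eigenvector of $\mL$. By LaSalle's invariance principle, $\tilde\vz(t)$ converges to the set of unit eigenvectors in $\tilde S$, namely $\{\pm\tilde\vecv_2,\dots,\pm\tilde\vecv_N\}$ (the mode $\vecv_1=\ones$ excluded by the constraint). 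To upgrade convergence-to-the-set into convergence to a single fixed direction I would invoke real-analyticity of $R$ and the {\L}ojasiewicz gradient inequality, or, assuming a simple spectrum, isolatedness of the critical points.

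The stability classification then follows from the variational characterization in Lemma \ref{Rayleigh lem}. Since $\tilde\vecv_2$ is the global minimizer of $R$ on $\tilde S$ with value $\lambda_2$, and $R$ is a strict Lyapunov function off the critical set, $\tilde\vecv_2$ is asymptotically stable. For $k\geq 3$, linearizing the flow at $\tilde\vecv_k$ gives, in the tangent directions $\tilde\vecv_j$, eigenvalues $\lambda_k-\lambda_j$; taking $j=2$ produces the positive eigenvalue $\lambda_k-\lambda_2>0$, so $\tilde\vecv_k$ has a nontrivial unstable manifold and is unstable (equivalently, $\tilde\vecv_k$ is a saddle of $R$, not a local minimum).

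Finally I would lift the directional result back to the full variable $\vz$. The magnitude obeys $\frac{d}{dt}\log\|\vz\|=\Lambda(t)-R(\tilde\vz(t))$; once $\tilde\vz(t)\to\tilde\vecv_2$ we have $R(\tilde\vz(t))\to\lambda_2$, and it remains to show $\Lambda(t)$ settles so that $\|\vz(t)\|$ converges, pinning the trajectory to the specific fixed point $c^*\vecv_2\in\Omega$ determined by the full system \eqref{ODE XY} through $\Lambda^*=\kappa\,{\vx^*}^T\vy^*=\lambda_2$. I expect this last step — controlling $\Lambda(t)$ and hence the magnitude via the coupled $(\vx,\vy)$ dynamics, rather than the clean directional argument — to be the main obstacle, together with ruling out convergence to the merely critical (non-minimal) eigendirections for all but a negligible set of initial conditions.
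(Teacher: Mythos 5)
Your proposal is correct, and its skeleton coincides with the paper's: normalize to $\tilde\vz=\vz/\|\vz\|$, use the Rayleigh quotient as a Lyapunov function, apply LaSalle for convergence to a fixed point, and classify stability through the variational characterization of Lemma \ref{Rayleigh lem}. The differences are worth recording. You write the normalized dynamics explicitly as the autonomous projected gradient flow $\frac{d}{dt}\tilde\vz=-(\eye-\tilde\vz\tilde\vz^T)\mL\tilde\vz$, which the paper never does (it differentiates $V(\tilde\vz(t))$ by the quotient rule inside Proposition \ref{Lyapunov prop}), and your Cauchy--Schwarz bound $\frac{d}{dt}R(\tilde\vz)=-2\big(\|\mL\tilde\vz\|^2-(\tilde\vz^T\mL\tilde\vz)^2\big)\le 0$ is literally the paper's ``negative variance'' identity \eqref{-Var} in different clothing. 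For instability of $\tilde\vecv_k$, $k\ge3$, you linearize and exhibit the positive eigenvalue $\lambda_k-\lambda_2$, whereas the paper shows $\tilde\vecv_k$ fails to be a local minimizer of $V$ along the segment toward $\tilde\vecv_2$ and invokes Theorem \ref{stability prop}(ii); both arguments require $\lambda_k>\lambda_2$ strictly and are equally valid. Finally, the two difficulties you flag at the end are not defects of your argument relative to the paper's---they are places where the paper is silent. The paper's proof lives entirely on $\tilde S$, so what it actually establishes is convergence and stability of the \emph{direction} of $\vz$ (i.e., convergence to $c\vecv_2$ up to the scalar); it never controls $\Lambda(t)$ or $\|\vz(t)\|$, so your ``main obstacle'' is not something the paper overcomes. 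Likewise, isolation of the fixed points in $\tilde S$, needed for the LaSalle step, holds only for a simple spectrum; the paper asserts it without comment, and your {\L}ojasiewicz suggestion is a legitimate way to cover the degenerate case. You may therefore stop where the paper stops, having matched its proof; just be aware that reading the theorem as a statement about the unnormalized trajectory $\vz(t)$, magnitude included, asks for more than either proof delivers.
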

The rest of this section is devoted to the proof of Theorem~\ref{convergence to FV}. 
First, we provide a `Lyapunov function' $V:\R^{N} \rightarrow [0, +\infty)$ with strictly negative slope along trajectories of \eqref{ODE XY}, at any point which is not in $\Omega$ (i.e. not a fixed point). This will subsequently be shown in Proposition \ref{Lyapunov prop}. We then put together our proof of Theorem \ref{convergence to FV}, where we show convergence to a fixed point using the LaSalle invariance principle (Theorem \ref{LaSalle} in Appendix \ref{Lyapunov theory}). This will prove the first statement of
Theorem~\ref{convergence to FV}. To prove the second statement, we use the Lyapunov stability theory to show that all the fixed points of type $c \vecv_k$ ($k\geq 3, c\ne 0$) are unstable, and the only possible candidate left for convergence, i.e., $c \vecv_2$, is asymptotically stable. For the sake of completeness, we provide relevant definitions and results from the theory surrounding LaSalle invariance principle and Lyapunov stability in Appendix \ref{Lyapunov theory}.

Recall that the set $S_0$ is unstable and we only consider trajectories starting from $S \setminus S_0$. For such trajectories, since $\vz(t)\ne \0$ in $S \setminus S_0$, we can 
construct another (implicit) system whose solutions are
\begin{equation}
\tilde \vz(t) \triangleq \frac{\vz(t)}{\| \vz(t) \|} ~~\text{for all}~ t\geq 0.
\label{w system}
\end{equation}
Trajectories of \eqref{w system} are always contained in the set $\tilde S = \{ \vw \in \R^N \ \big | \ \vw^T\ones = 0, \| \vw \| = 1\}$. Non-zero fixed points of \eqref{ODE Z} will also be fixed points of this new system. However, this time, they are normalized, and isolated in space, unique up to only a sign. This will make it easier to apply the LaSalle principle later to show convergence. Since we are now working in the normalized space, we let $\{\tilde \vecv_2, \cdots , \tilde \vecv_N \}$ denote the normalized eigenvectors of $\mL$, and denote the set of fixed points as $\tilde{\Omega} = \{\tilde\vecv_2, \cdots , \tilde\vecv_N \}$.\footnote{Technically, it should be $\tilde{\Omega} = \{\pm\tilde \vecv_2, \cdots , \pm\tilde \vecv_N \}$, but we use $\tilde \vecv_k$ to refer to $\pm\tilde\vecv_k$ irrespective of the sign.}

Define the Lyapunov function $V:\R^N \rightarrow [0,\infty)$ as 
\begin{equation}
V(\vu) = \frac{1}{2}\vu^T[-\mQ]^T\vu = \frac{1}{2}\vu^T\mL\vu, \ \ \ \ \mbox{for any}~ \vu \in \R^N.
\label{V}
\end{equation}
We shall now analyze the function $V(\vu)$ over trajectories $\tilde \vz(t)$ as defined in \eqref{w system}.
\begin{proposition}
For trajectories of (\ref{ODE XY}) starting from any $(\vx(0), \vy(0)) \in S \setminus S_0$, we have $\frac{d}{dt} V(\tilde \vz(t)) \leq 0$, with equality only at the fixed points $\tilde{\Omega}$ that are left eigenvectors of $\mL$.
\label{Lyapunov prop}
\end{proposition}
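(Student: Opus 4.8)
The plan is to reduce the claim to a statement about the normalized flow on the unit sphere $\tilde S$ and then close it with a single application of the Cauchy--Schwarz inequality. First I would record that along any trajectory starting in $S \setminus S_0$ we have $\vz(t) \neq \0$ for all $t \geq 0$, so that $\tilde\vz(t) = \vz(t)/\|\vz(t)\|$ in \eqref{w system} is well defined. Once a particular solution is fixed, $\Lambda(t) = \kappa\,\vx(t)^T\vy(t)$ is a determined continuous scalar function of $t$, and \eqref{ODE Z} becomes the linear time-varying system $\frac{d}{dt}\vz(t) = \big(\mQ^T + \Lambda(t)\eye\big)\vz(t)$. Uniqueness of solutions to a linear ODE forces $\vz \equiv \0$ whenever $\vz$ vanishes at even one time, so $\vz(0) \neq \0$ guarantees $\vz(t) \neq \0$ throughout, consistent with Proposition~\ref{instability x=y}.

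Next I would derive the ODE obeyed by $\vw(t) \triangleq \tilde\vz(t)$. Writing $r(t) = \|\vz(t)\|$ and differentiating $\vw = \vz/r$ gives $\dot\vw = \dot\vz/r - (\dot r/r)\,\vw$, where $\dot r/r = \vw^T(\dot\vz/r)$. Substituting $\dot\vz/r = \mQ^T\vw + \Lambda\vw$, the $\Lambda$-contributions cancel: the $\Lambda\vw$ from $\dot\vz/r$ is exactly removed by the $\Lambda\|\vw\|^2\vw = \Lambda\vw$ inside the projection term $\big(\vw^T(\mQ^T\vw + \Lambda\vw)\big)\vw$, since $\|\vw\| = 1$. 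One is then left with the clean, $\Lambda$-free flow
\begin{equation}
\dot\vw = \mQ^T\vw - (\vw^T\mQ^T\vw)\,\vw = -\mL\vw + (\vw^T\mL\vw)\,\vw,
\label{normalized flow}
\end{equation}
using $\mQ^T = \mQ = -\mL$ by symmetry of $\mL$. This is precisely the negative Rayleigh-quotient gradient flow projected onto the tangent space of the sphere, which is exactly why normalization is the right device: the time-dependent scaling nuisance $\Lambda(t)$ disappears entirely.

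With \eqref{normalized flow} in hand, I would compute the slope of $V(\vw) = \tfrac12 \vw^T\mL\vw$. By symmetry of $\mL$, $\frac{d}{dt}V(\vw) = \vw^T\mL\dot\vw$, and substituting \eqref{normalized flow} yields
\begin{equation}
\frac{d}{dt}V(\vw) = -\vw^T\mL^2\vw + (\vw^T\mL\vw)^2 = -\|\mL\vw\|^2 + (\vw^T\mL\vw)^2.
\label{Vdot}
\end{equation}
Since $\|\vw\| = 1$, Cauchy--Schwarz gives $(\vw^T\mL\vw)^2 = \langle \vw, \mL\vw\rangle^2 \leq \|\vw\|^2\|\mL\vw\|^2 = \|\mL\vw\|^2$, so the right-hand side of \eqref{Vdot} is $\leq 0$. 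Equality in Cauchy--Schwarz holds exactly when $\vw$ and $\mL\vw$ are linearly dependent, i.e. $\mL\vw = \mu\vw$ for some scalar $\mu$, which is precisely the condition that $\vw$ be an eigenvector of $\mL$. Imposing the defining constraints $\vw^T\ones = 0$ and $\|\vw\| = 1$ of $\tilde S$ rules out the eigenvector $\ones$ and leaves exactly the normalized eigenvectors $\tilde\vecv_2, \dots, \tilde\vecv_N$, i.e. the fixed-point set $\tilde\Omega$. This establishes $\frac{d}{dt}V(\tilde\vz(t)) \leq 0$ with equality only on $\tilde\Omega$, as claimed.

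I expect the only delicate step to be the derivation of \eqref{normalized flow}: one must track the projection term carefully to confirm that $\Lambda(t)$ genuinely cancels rather than merely remaining bounded, since it is this cancellation that turns $V$ into a bona fide Lyapunov function with sign-definite slope independent of the interaction strength $\kappa$. Once \eqref{normalized flow} is secured, the remaining work is a one-line symmetric differentiation giving \eqref{Vdot} and a direct Cauchy--Schwarz estimate, both of which are routine.
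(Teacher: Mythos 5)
Your proof is correct, and although it rests on the same Lyapunov function \eqref{V} and passes through exactly the same intermediate quantity---your expression for $\frac{d}{dt}V$ is the paper's \eqref{d/dt V - two} specialized to $\vz^T\vz = 1$---the derivation and the closing inequality are genuinely different. The paper never writes an ODE for $\tilde\vz$: it differentiates the Rayleigh quotient $V(\vz/\|\vz\|)$ along \eqref{ODE Z} by the quotient rule (inside which the $\Lambda(t)$ terms cancel), expands $\vz$ in the orthonormal eigenbasis of $\mL$, and recognizes $\frac{d}{dt}V = E[R(t)]^2 - E[R(t)^2] = -\text{Var}[R(t)] \le 0$ for a discrete random variable $R(t)$ placing mass $c_k(t)^2/\sum_j c_j(t)^2$ on $\lambda_k$; equality means $R(t)$ is degenerate, i.e.\ $\tilde\vz(t)$ is an eigenvector. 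You instead first derive the autonomous, $\Lambda$-free projected dynamics $\dot{\tilde\vz} = \mQ^T\tilde\vz - (\tilde\vz^T\mQ^T\tilde\vz)\tilde\vz$ on the unit sphere (identifying the normalized dynamics as a projected Rayleigh-quotient gradient flow), after which $\frac{d}{dt}V = (\tilde\vz^T\mL\tilde\vz)^2 - \|\mL\tilde\vz\|^2 \le 0$ follows from Cauchy--Schwarz, whose equality case (linear dependence of $\tilde\vz$ and $\mL\tilde\vz$) characterizes eigenvectors directly; your cancellation computation and equality analysis are both correct. Each route buys something: Cauchy--Schwarz is basis-free, so your argument transfers verbatim to the self-adjoint setting of Section \ref{time reversible} with the inner product $\innprod{\cdot}{\cdot}$, while the paper's variance identity is quantitative, exhibiting the instantaneous decay rate of $V$ as the spectral variance of the trajectory, which is useful intuition for how convergence slows when spectral mass concentrates near a single eigenvalue. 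Finally, your preliminary observation that $\vz(0) \neq \0$ forces $\vz(t) \neq \0$ for all $t$---because $\vz$ solves the linear time-varying system $\dot\vz = \big(\mQ^T + \Lambda(t)\eye\big)\vz$ with continuous coefficient, so a solution vanishing at one instant vanishes identically---is a genuine strengthening: the paper only invokes instability of $S_0$ (Proposition \ref{instability x=y}), which by itself does not rule out trajectories from $S\setminus S_0$ reaching $S_0$ in finite time, whereas your uniqueness argument closes that small gap.
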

\begin{proof}
For the rest of this section, we suppress the `$(t)$' notation and assume it implicitly. We also use $\dot f$ to mean $\frac{d}{dt} f$ whenever convenient. Observe that 
\[
V(\tilde \vz(t)) = \frac{1}{2}\tilde\vz^T[-\mQ]^T\tilde \vz = \frac{1}{2}\frac{\vz^T[-\mQ]^T\vz}{\vz^T\vz}. 
\]
Differentiating $V$ along the trajectories of (\ref{ODE Z}) gives
\begin{equation}
\frac{d}{dt} V(\tilde \vz(t)) = \frac{1}{2(\vz^T\vz)^2}\Big( [\vz^T\mQ^T\vz] \big[\frac{d}{dt} \vz^T\vz \big] - [\vz^T\vz] \big[\frac{d}{dt}\vz^T\mQ^T\vz\big]  \Big).
\label{d/dt V}
\end{equation}
First, we have
\begin{equation}
 \frac{d}{dt} \Big(\vz^T\vz\Big) = 2\vz^T \dot \vz(t) = 2\big(\vz^T\mQ^T\vz + \Lambda(t) \vz^T\vz \big),
 \label{d/dt z^Tz}
\end{equation}
and similarly we have
\begin{equation}
\frac{d}{dt} \Big (\vz^T[-\mL]\vz \Big) = \dot \vz^T\mQ^T \vz + \vz^T\mQ^T\dot \vz = 2\big( \Lambda(t)\vz^T\mQ^T\vz + \vz^T[\mQ^2]^T\vz \big).
\label{d/dt z^TLz}
\end{equation}
By substituting (\ref{d/dt z^Tz}) and (\ref{d/dt z^TLz}) into (\ref{d/dt V}), we get
\begin{equation}
\frac{d}{dt} V(\tilde \vz(t)) = \frac{\big(\vz\mQ^T\vz\big)^2 - \big(\vz^T[\mQ^2]^T\vz\big)(\vz^T\vz) }{(\vz^T\vz)^2}
\label{d/dt V - two}
\end{equation}

To show (\ref{d/dt V - two}) is non-positive, we leverage the property of $\mQ = -\mL$ being a symmetric matrix whose eigenvectors $\{ \tilde \vecv_1,\tilde \vecv_2, \cdots \tilde \vecv_N\}$ form an orthonormal basis for $\R^N$. Thus, any vector $\vz \in \R^N$ can be written as a linear combination of these orthonormal eigenvectors. Since the trajectory $\vz$ always satisfies $\vz^T\vecv_1 = \vz^T\ones = 0$, we can write
$\vz =  \sum_{k=2}^N c_k \tilde\vecv_k$, where $c_k = \vz^T \tilde\vecv_k$
and similarly,
\begin{equation}
\vz^T\vz =  \sum_{k=2}^N c_k^2, ~~
\vz^T[-\mQ^T]\vz =  \sum_{k=2}^N \lambda_k c_k^2, ~~
\vz^T[\mQ^2]^T\vz =  \sum_{k=2}^N \lambda_k^2 c_k^2.
\label{decomp}
\end{equation}
Substituting these into (\ref{d/dt V - two}) yields

\begin{equation}
 \frac{d}{dt} V(\tilde \vz(t)) = \Bigg[ \sum_{k=2}^N \lambda_k \frac{c_k(t)^2}{\Big( \sum_{k=2}^N c_k(t)^2\Big)} \Bigg]^2 - \Bigg[  \sum_{k=2}^N \lambda_k^2 \frac{c_k(t)^2}{\Big( \sum_{k=2}^N c_k(t)^2\Big)} \Bigg].
\label{d/dt V - three}
\end{equation}
Now, for each $t > 0$, define a discrete random variable $R(t)$ which takes values $\lambda_k$ with probability $\frac{c_k(t)^2}{\sum_{k=2}^N c_k(t)^2}$ for $k \in \{2, 3, \cdots, N\}$. We can then rewrite (\ref{d/dt V - three}) as
\begin{equation}
 \frac{d}{dt} V(\tilde \vz(t)) = E[R(t)]^2 - E[R(t)^2] = -\text{Var}[R(t)] \leq 0,
 \label{-Var}
\end{equation}
where the equality $\frac{d}{dt} V(\vz(t)) = 0$ holds when the random variable $R(t)$ is constant, i.e., when only one of the $c_k(t)$'s is non-zero. In other words, we have zero derivative only when $\tilde \vz (t)$ hits an eigenvector $\tilde \vecv_k$ for some $k=2, 3, \cdots, N$. This completes the proof.
\end{proof}

Using the above results and Theorem \ref{stability prop} from Appendix \ref{Lyapunov theory}, which is regarding the stability of fixed points, we are now ready to prove Theorem \ref{convergence to FV}.
\begin{proof} (Theorem \ref{convergence to FV})
First, we use the LaSalle invariance principle (Theorem \ref{LaSalle}) to prove convergence to \emph{a} fixed point. Note that any trajectory originating from $\tilde S = \{ \vw \in \R^N \ \big| \ \vw^T\ones = 0, \| \vw \| = 1\}$ (and as a result never leaving $\tilde S$) is relatively compact. This is because $\tilde S$ is a closed and bounded subset of a finite dimensional Euclidean space $\R^N$, and hence compact. Also, the fixed points $\{\tilde \vecv_2, \tilde \vecv_3, \cdots, \tilde \vecv_N \}$ of $\big(\tilde \vz(t) \big)_{t\geq 0}$ are isolated in $\tilde S$. Then, Theorem \ref{LaSalle} implies that the solutions given by \eqref{w system} have all their trajectories starting from $\tilde \vz(0) \in \tilde S$ converge to \emph{a} fixed point (i.e. no limit cycles exist and convergence to a point is guaranteed).

Now, we use Proposition \ref{stability prop}(i) to show that the Fiedler vector $\tilde \vecv_2$ is an asymptotically stable fixed point. We already know from \eqref{v2} in Lemma \ref{Rayleigh lem} that $\tilde \vecv_2$ is the global minimizer of the Lyapunov function $V$, and by Proposition \ref{stability prop}(i), it is asymptotically stable.

To see why the eigenvectors $\tilde \vecv_k \in \tilde \Omega \setminus \{\tilde \vecv_2\}$ are unstable, we show that they can never be local minima. Indeed, for any $\alpha \in (0,1)$ and $k > 2$, let $\vp = \tilde \vecv_k + \alpha(\tilde \vecv_2 - \tilde \vecv_k) = (1-\alpha)\tilde \vecv_k + \alpha\tilde \vecv_2$. Then, we have again as a consequence of \eqref{vk} in Lemma \ref{Rayleigh lem} that $V(\vp) < V(\tilde \vecv_k)$. Moreover, if $\tilde \vz(0) = \vp$ we have $\dot V(\tilde z(0)) < 0$ by Proposition \ref{Lyapunov prop}. Therefore, for $k>2$, $\tilde \vecv_k$ is definitely not a local minimizer, and is thus unstable in view of Proposition~\ref{stability prop}(ii).

In summary, we have shown that the Fiedler vector is the only asymptotically stable fixed point of \eqref{ODE Z}, with all other fixed points (eigenvectors) being unstable. This completes the proof.
\end{proof}
\section{From deterministic back to stochastic dynamics} \label{deterministic-stochastic}
In this section, we take the opportunity to reflect upon the results in Sections \ref{stochastic section} and \ref{deterministic section}, and their influence on the long run behaviour of the stochastic process constructed in Section \ref{construction}. Theorem \ref{fluid limit} made the first connection between stochastic processes $\left( \vx^n(t) , \vy^n(t) \right)$ for $n \in \N$, and the solution $\left( \vx(t) , \vy(t) \right)$ of the deterministic ODE system \eqref{ODE F} (or, consequently, between $\vz^n(t) \triangleq \vx^n(t) - \vy^n(t)$ and $\vz(t)$ in \eqref{ODE Z}) by showing that uniformly over a finite time horizon (and sufficiently large $n$) the stochastic process rarely deviates from the solution of the ODE system. This prompted us to analyze system \eqref{ODE Z} and in Theorem \ref{convergence to FV}, we gave stability results on all the fixed points of \eqref{ODE Z}. The Fiedler vector $\vecv_2$ (up to a constant multiple) turned out to be an asymptotically stable fixed point, with all the higher eigenvectors $\vecv_k$, $k \geq 3$ (up to constant multiples) being unstable. This gives us reason to believe that a working algorithm simulating the interacting stochastic process $\left( \vx^n(t) , \vy^n(t) \right)$ should have $\vz^n(t)$ converge to $c \vecv_2$ in some sense, provided it never gets stuck around other, unstable fixed points forever. In this section, we make this intuition precise by resorting to results in \cite{BW2003}, Sections \ref{ODE intro} and \ref{stoch-deter main}.

For any set $B \subset \R^N$ and given $\vz^n(0) \in B$ (stochastic process starts in $B$), let 
$$T^n(B) = \inf \left\lbrace t \geq 0 \ | \ \vz^n(t) \notin B \right\rbrace$$
define the \emph{exit time} from set $B$. Similarly, let 
$$H^n(B,T) =  \frac{1}{T}\int_0^T \Char_{\{\vz^n(s) \in B \}}ds$$
denote the (random) fraction of time the stochastic process $\vz^n(t)$ spends inside set $B$ in the interval $[0,T]$. Note that $\vz^n(t)$ takes values on the N-dimensional grid ($\Z^N$) scaled by the factor $1/n$. Denote this scaled grid by $\frac{1}{n} \Z^N$. Then, $\vz^n(t) \in B$ for some $B \in R^N$ and $t\geq0$ is possible only if $B \cap \frac{1}{n} \Z^N \neq \varnothing$. Using this, define for every $k \geq 2$ the set $B(\vecv_k,m)$ as the smallest open ball containing $c \vecv_k$, but not the \emph{origin}, such that $B(\vecv_k,m) \cap \frac{1}{m} \Z^N \neq \varnothing$. Note that as $m$ increases, $B(\vecv_k,m)$ gets smaller and smaller for each $k \geq 0$ (since resolution of the grid $\frac{1}{m} \Z^N$ gets finer), until the sets do not intersect anymore, and $\lim_{m \to \infty} B(\vecv_k,m) = \{c \vecv_k, c \in \R \}$\footnote{The above can also be done for the normalized version $\tilde \vz(t)$, in which case the fixed points are $\tilde \vecv_k$ and truly isolated in space, making construction of the isolating simple. However, we construct $B(\vecv_k,m)$  the way we do to emphasize that for larger $n$,  $B(\vecv_k,m)$ can be taken to be tighter around $c \vecv_k$ for larger $m$.}. Therefore for large values of $m$, the sets $B(\vecv_k,m)$ for $k \geq 2$ are disjoint and never contain two or more different eigenvectors as fixed points, thereby isolating them.
Also, for any set $B \subset \R^N$ containing $\vecv_k$, $k \geq 3$, let $\bar B$ represent all points in $B$ such that starting from those points, the ODE trajectories $\vz(t)$ leave $B$ in finite time (i.e. subset of the unstable space of $\vecv_k$ which is contained in $B$). Then, we can prove the following by adapting Propositions 3 and 4 in \cite{BW2003} to our system.
\begin{proposition} 
Consider $m$ sufficiently large such that $B(\vecv_k,m)$ for all $k \geq 2$ are disjoint. Then, 
\begin{itemize}
\item[(i)]
for any $k \geq 3$ and sufficiently large $n$, the stochastic process originating from $\vz^n(0) \in \bar B(\vecv_k,m)$ leaves the neighborhood $B(\vecv_k,m)$ of $\vecv_k$ in finite time with high probability. More precisely,
$$\mP \left(\limsup_{n \rightarrow \infty} T^n(B(\vecv_k,m)) < +\infty \right) = 1.$$

\item[(ii)]
for $n$ sufficiently large and any $B(\vecv_2,m)$ where $m \leq n$, with high probability the process $\vz^n(t)$ spends almost all its time, in the long run, in the neighborhood $B(\vecv_2,m)$. More precisely, for every $m\in \N$,we have
$$\lim_{n \to \infty} [\liminf_{T \to \infty} H^n(B(\vecv_2,m),T)] = 1 \ \ \ \ a.s.$$
\end{itemize}
\label{Benaim prop}
\end{proposition}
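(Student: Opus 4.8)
The plan is to treat Proposition~\ref{Benaim prop} as a direct instantiation of the Benaïm–Weibull machinery of \cite{BW2003}, whose two escape/occupation results require three ingredients that are all already in hand: (a) a Lipschitz vector field generating the limiting semi-flow, supplied by Proposition~\ref{Lipschitz prop}; (b) a quantitative fluid-limit estimate controlling, uniformly on compact time intervals, the deviation of the density-dependent process from the ODE solution, supplied by the exponential bound \eqref{Prob bound} of Theorem~\ref{fluid limit}; and (c) a complete classification of the limit set of \eqref{ODE Z}, supplied by Theorem~\ref{convergence to FV} together with Proposition~\ref{instability x=y}. Concretely, Theorem~\ref{convergence to FV} gives that $\tilde\vecv_2$ is the unique asymptotically stable fixed point while every $\tilde\vecv_k$, $k\ge3$, is unstable, and Proposition~\ref{instability x=y} rules out the origin (the set $S_0$) as a trap. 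First I would translate these facts into the language of \cite{BW2003} — attractor, isolated invariant set, unstable set — so that their Propositions~3 and~4 apply to $\vz^n(t)$.

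For part~(i) the key object is the unstable direction at $c\vecv_k$ for $k\ge3$. By construction $\bar B(\vecv_k,m)$ consists of points whose deterministic trajectory exits $B(\vecv_k,m)$ in finite time, and the non-triviality of this set — that $c\vecv_k$ genuinely possesses an escaping unstable direction — is precisely the instability established in Theorem~\ref{convergence to FV} via Proposition~\ref{Lyapunov prop} and the strict Rayleigh inequality \eqref{vk} of Lemma~\ref{Rayleigh lem}. Taking a finite exit time $T_0$, uniform over a compact subset of $\bar B(\vecv_k,m)$, I would apply \eqref{Prob bound} on $[0,T_0+1]$ with $\epsilon$ smaller than the gap between the ODE trajectory and $\partial B(\vecv_k,m)$; this shows that the probability the stochastic process fails to leave $B(\vecv_k,m)$ by time $T_0+1$ decays exponentially in $n$. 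A Borel–Cantelli argument over $n$ then yields $\mP(\limsup_n T^n(B(\vecv_k,m))<+\infty)=1$.

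For part~(ii) I would combine the global-convergence half of Theorem~\ref{convergence to FV} (LaSalle, so no limit cycles) with part~(i). Asymptotic stability of $\tilde\vecv_2$ provides a forward-invariant neighborhood on which the ODE occupation fraction equals $1$, while the escape property of part~(i) prevents trajectories from accumulating near any $\vecv_k$, $k\ge3$. Transcribing Proposition~4 of \cite{BW2003}, the deterministic occupation fraction near $c\vecv_2$ tends to $1$; feeding the estimate \eqref{Prob bound} into a window-by-window comparison of $H^n(B(\vecv_2,m),T)$ with its ODE analogue over successive intervals $[jT_1,(j{+}1)T_1]$ then promotes this to the almost-sure double limit $\lim_n\liminf_T H^n(B(\vecv_2,m),T)=1$.

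The hard part will be the faithful adaptation of the hypotheses of \cite{BW2003} to the present continuous-time, density-dependent setting, rather than the estimates themselves. Three points demand care: the jumps here are $O(1/n)$ and the rates are bounded only because $(\vx,\vy)$ lives on the compact simplex $S$, so I must verify globally the boundedness conditions underlying \eqref{Prob bound}; the grid granularity forces the constraint $m\le n$ so that $B(\vecv_2,m)\cap\frac1n\Z^N\neq\varnothing$ and the occupation event is non-degenerate; and, because the unnormalized process $\vz^n$ has fixed points forming entire rays $\{c\vecv_k\}$ rather than isolated points, I must check that the balls $B(\vecv_k,m)$, being bounded away from the origin, genuinely isolate a single ray, so that the isolated-invariant-set requirement of \cite{BW2003} is met. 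Once these bookkeeping issues are settled, both statements follow from Propositions~3 and~4 of \cite{BW2003}.
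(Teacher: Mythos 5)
Your proposal takes essentially the same approach as the paper: the paper's entire justification of Proposition~\ref{Benaim prop} is to invoke Propositions~3 and~4 of \cite{BW2003}, supported by exactly the ingredients you assemble --- the fluid-limit estimate of Theorem~\ref{fluid limit}, the Lipschitz property from Proposition~\ref{Lipschitz prop}, and the stability classification of Theorem~\ref{convergence to FV} together with Proposition~\ref{instability x=y}. If anything, your sketch (Borel--Cantelli over $n$ for part~(i), the window-by-window occupation comparison for part~(ii), and the bookkeeping about grid granularity $m\le n$ and isolating the rays $\{c\vecv_k\}$) supplies more detail than the paper, which states the adaptation without a written proof.
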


The above result states that for all sufficiently large $n$, the stochastic process spends only finite time in a neighborhood of the unstable fixed points $\vecv_k$, $k \geq 3$. It then proceeds to spend almost all its time, in the long run, in a neighborhood of the asymptotically stable fixed point $\vecv_2$. The second statement of Proposition \ref{Benaim prop} is especially helpful to us. Since it implies that for all sufficiently large $n$, the stochastic process spends most of its time in a small neighborhood of $\vecv_2$ with high probability, we expect to observe that $\frac{1}{T}\int_0^T \vz^n(s)ds \approx  \vecv_2$ via simulations, for large time horizon $T$. In the next section, we provide numerical results to show that this is indeed the case for interacting random walks over different values of $n$. 

Lastly, recall the bound \eqref{Prob bound} in Theorem \ref{fluid limit}. \eqref{Prob bound} only gives information on how closely the stochastic process follows the solution to the ODE system, and is not by any means a measure of how quickly $\vz^n(t)$ might converge to $\vecv_2$. That being said, the bound in \eqref{Prob bound} increases with $\kappa$ and $N$, but decreases in $n$. Therefore, theoretically, increases in $\kappa$ and $N$ need to be compensated by increasing $n$ if we want \eqref{Prob bound} to ensure that the stochastic process $\big( \vx^n(t),\vy^n(t) \big)_{t\geq 0}$ closely follows the solution $\big(\vx(t),\vy(t)\big)_{t\geq0}$ of the ODE system \eqref{ODE F}. However in practice our framework turns out to be forgiving and requires only moderately large $n$ to work well with a wide range of $N$ and $\kappa$, as we shall observe in the next section.

\section{Numerical results} \label{Numericals}
We begin by providing our simulation setup in Section \ref{sim setup}. In Section \ref{supporting sim} we present simulation results over a range of parameters and different graphs in order to confirm our main theoretical results from Sections \ref{stochastic section}, \ref{deterministic section} and \ref{deterministic-stochastic}. In Section \ref{dynamic sim} we consider the setting of dynamic graphs, and present numerical results that show the robustness of the performance of framework over a dynamic topology.

\subsection{Simulation setup} \label{sim setup}
We source two real world datasets, one is about a social network of dolphins ($62$ nodes, $159$ edges) \cite{konect:dolphins} and the other is about the Facebook social network ($4039$ nodes, $88234$ edges) from the SNAP repository \cite{snap}.
They are undirected, connected graphs, each having its own combinatorial Laplacian matrix $\mL$. Walking according to $\mQ = -\mL$ on any graph is very natural, since it requires only local information.\footnote{A walker at node $i$, will stay there for a random exponential time with rate $d(i)$, after which it will move to a neighbor of $i$ chosen uniformly at random. Hence, the walker uses only local information to walk according to CTMC $\mQ=-\mL$.} For a specifically chosen $n$, we keep track of the quantities $\hat \vx^n(t) \triangleq \frac{1}{t} \int_0^t \vx^n(s) ds$ and $\hat \vy^n(t) \triangleq \frac{1}{t} \int_0^t \vy^n(s) ds$, the empirical density distributions of type-\emph{x} and type-\emph{y} walkers; and our Fiedler vector estimator $\hat \vz^n(t) \triangleq \hat \vx^n(t) - \hat \vy^n(t)$ for all $t\geq0$.

\begin{figure*}[t!]
  \centering
  \includegraphics[width=\textwidth]{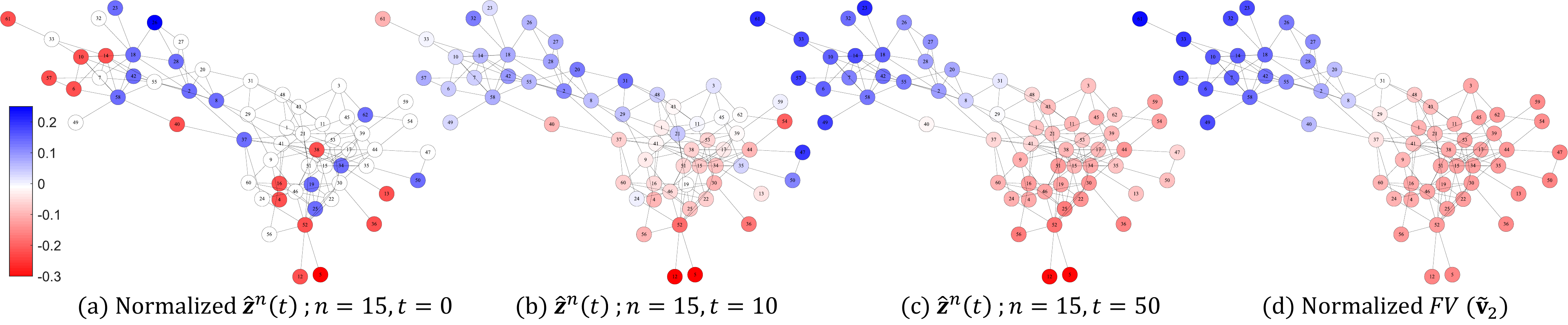}
  \caption{Colormap showing progression of the algorithm over a short time. Red and blue colors denote which group of walkers has majority at the corresponding node, while intensity of the color depends on the value of the corresponding FV estimate. (d) shows partition using the actual Fiedler vector.}
  \vspace{-2mm}
  \label{fig-heatmap}
\end{figure*}

To show how our framework approximates the Fiedler vector and leads to a natural bi-partition of the graph, we illustrate in Figure \ref{fig-heatmap} the progression of a single simulation run over the \emph{Dolphins} graph, with $n=15$ walkers in each group and $\kappa=1000$ kept constant throughout the run. Figures \ref{fig-heatmap}(a), (b) and are `snapshots' taken at times $t=0$ (initial configuration), $10$ and $50$ respectively. The color of a node represents which group of walkers hold majority at that node, and its intensity depends on the corresponding (normalized) value of $\hat \vz^n(t)$. Figure \ref{fig-heatmap}(d) is an illustration of what the spectral bi-partitioning using the true value (computed offline) of $\vecv_2$ would be. As the simulation progresses, we can observe the bi-partition taking shape and Figure \ref{fig-heatmap}(c) and (d) end up closely resembling each other.

We wish to make these observations from Figure \ref{fig-heatmap} more concrete, and show quantitatively that the quantity $\hat \vz^n(t)$ converges to the Fiedler vector $\vecv_2$ over time. For this purpose we use two metrics to help visualize the convergence, namely the Rayleigh quotient ($RQ$) and the cosine similarity ($CS$), which are given respectively by

\begin{align*}
RQ(\hat \vz^n(t)) \triangleq \frac{\hat \vz^n(t)^T\mL\hat \vz^n(t)}{\| \hat \vz^n(t) \|^2}, \ \ \ \ \ \ CS(\hat \vz^n(t)) \triangleq \frac{| \hat \vz^n(t)^T \vecv_2 |}{\| \hat \vz^n(t)  \|\cdot\|\vecv_2 \|}.
\end{align*}
From Lemma \ref{Rayleigh lem}, $RQ(\cdot)$ achieves its minimum, $\lambda_2$ at the Fiedler vector $\vecv_2$. Therefore $RQ(\hat \vz^n(t))$ approaching $\lambda_2$ signifies that $\hat \vz^n(t)$ aproaches $\vecv_2$. The cosine similarity $CS(\hat \vz^n(t))$ tracks the angle between  $\vecv_2$ and $\hat \vz^n(t)$. $CS(\hat \vz^n(t)) = 1$ if $\hat \vz^n(t)$ aligns with $\vecv_2$, implying its convergence, and $CS(\hat \vz^n(t)) = 0$ if it is orthogonal to $\vecv_2$. $CS(\hat \vz^n(t)$ does not just track the convergence, but also tests Proposition \ref{Benaim prop}(i), i.e. whether $\hat \vz^n(t)$ ever gets stuck near one of the unstable fixed points, $\vecv_k$ for $k \geq 3$. Such a case would cause $CS(\hat \vz^n(t)) = 0$, and will be fairly visible in our results if it is a common occurrence. For each simulation run, every random walker starts at an initial position on the graph selected uniformly at random. We use 100 different runs for each simulation setup and display results as averaged over the 100 runs. To test these simulations, we pre-compute values of $\lambda_2$ and $\vecv_2$ for our datasets and use them to compare with $RQ$ and to obtain values of $CS$ respectively. It should be noted that this is purely for comparison purposes, and our framework never requires such computations to be carried out.

\subsection{Simulation results over a range of parameters} \label{supporting sim}

\begin{figure*}[t!]
    \centering
    \vspace{-2mm}
    \hspace{-1mm}
    \subfigure[$RQ$ vs $t$ over different $n$; $\kappa = 1000$ (inset plotted on log-log scale)]{\includegraphics[scale = 0.35]{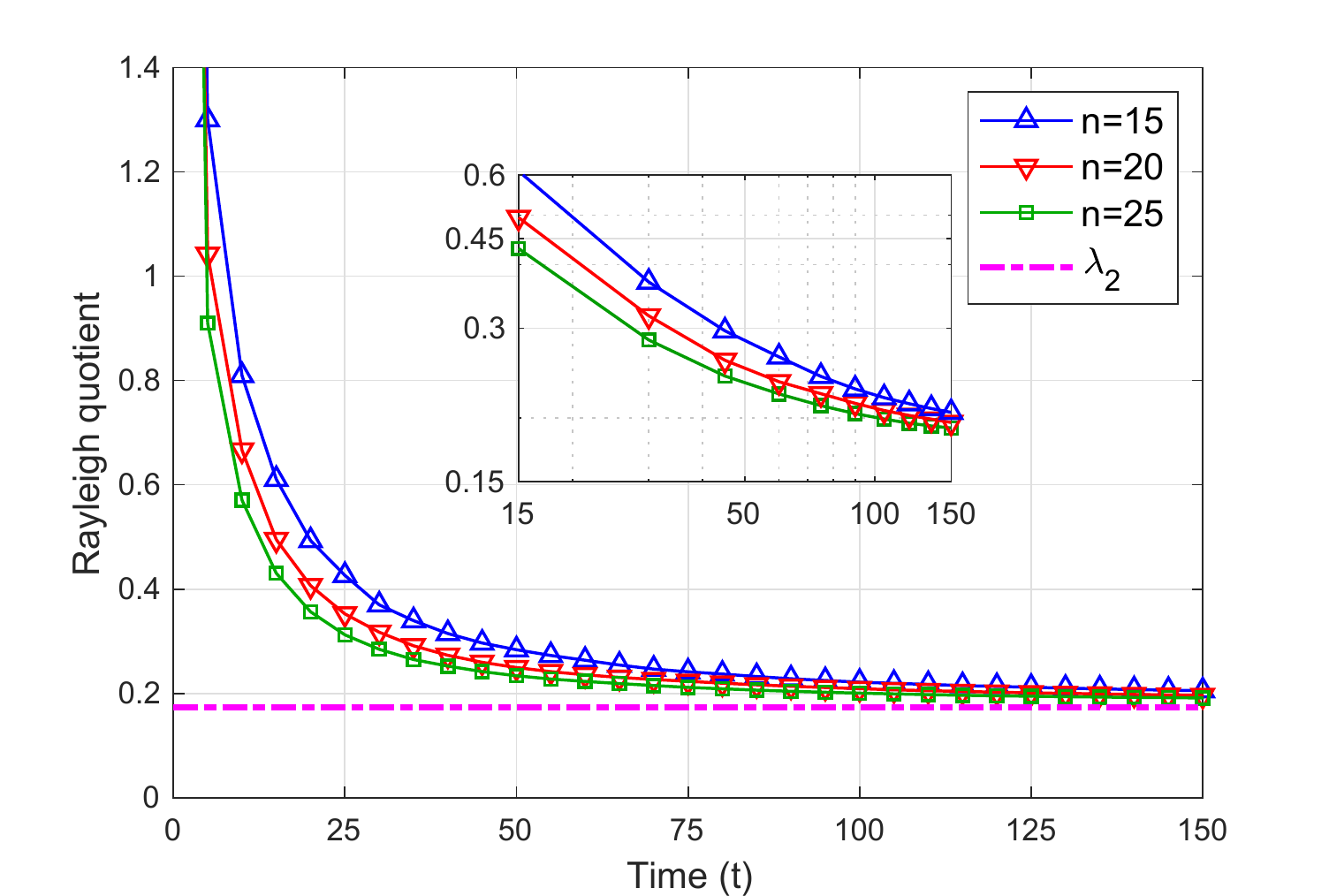}}
    \hspace{-1mm}
    \subfigure[$CS$ vs $t$ over different$n$; $\kappa = 1000$]{\includegraphics[scale = 0.35]{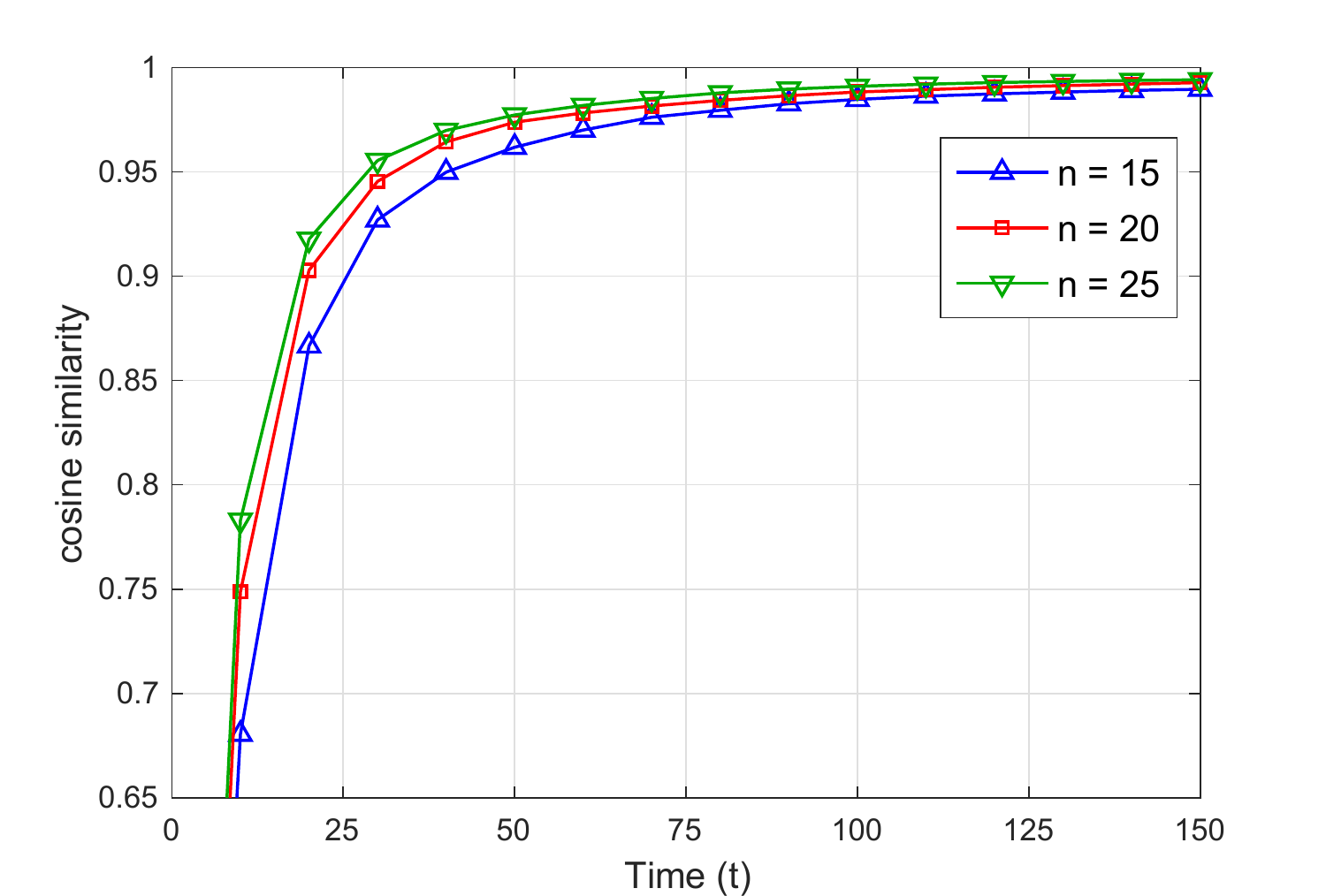}}
    \hspace{-1mm}\\

    \hspace{-1mm}
     \subfigure[$RQ$ vs $t$ over different $\kappa$; $n = 15$]{\includegraphics[scale = 0.35]{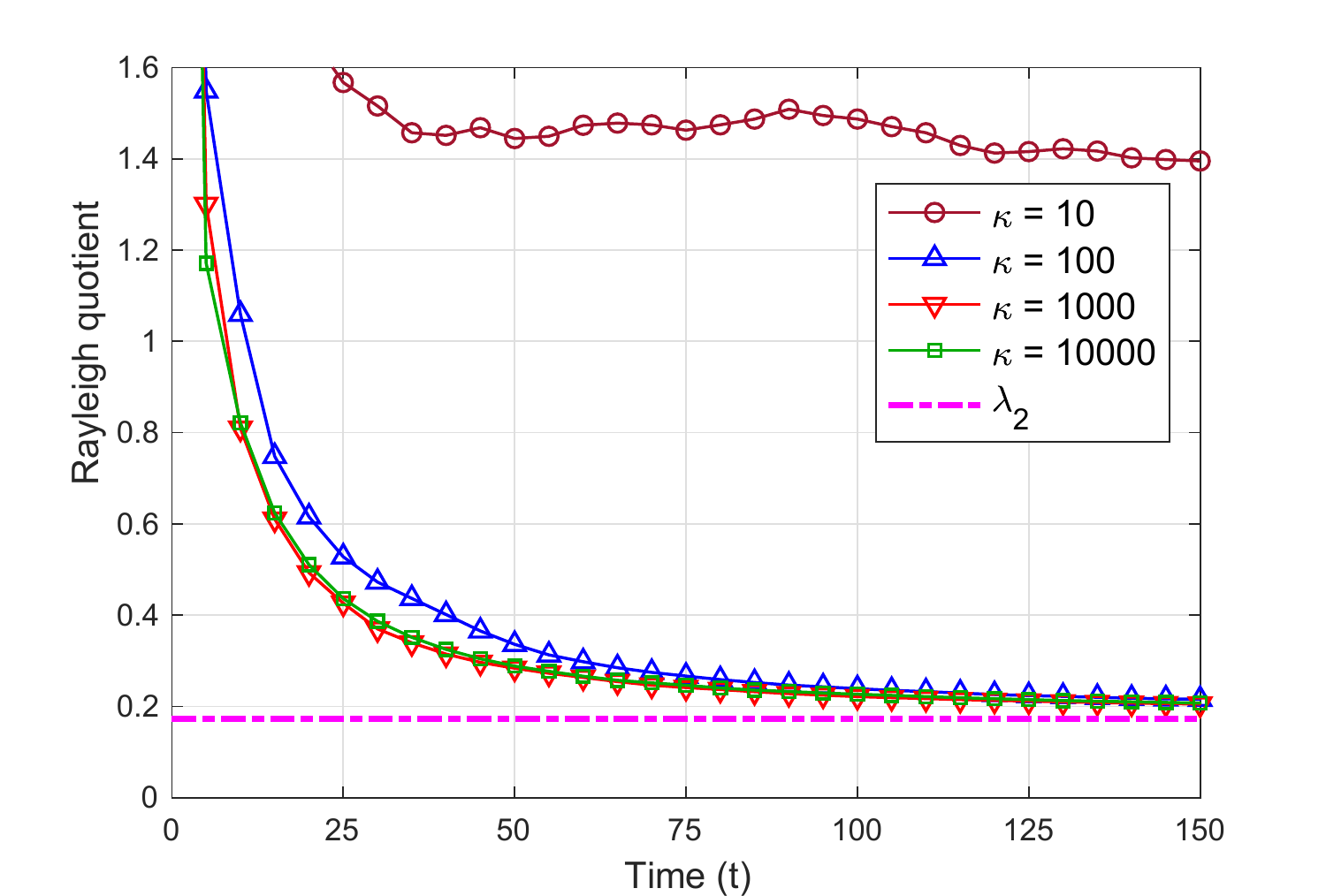}}
    \vspace{-0mm}
    \hspace{-1mm}
     \subfigure[$CS$ vs $t$ over different $\kappa$; $n = 15$]{\includegraphics[scale = 0.35]{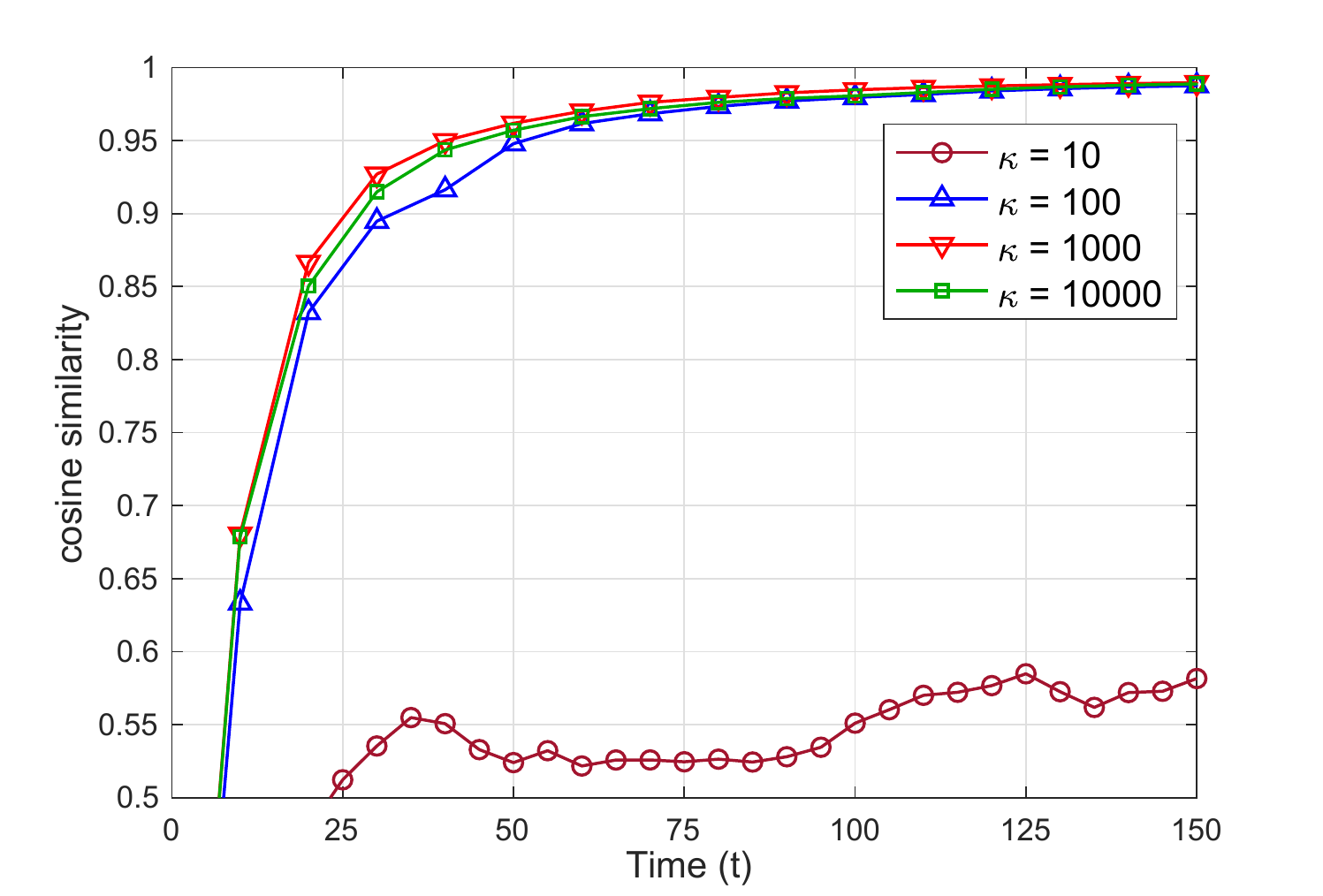}}
     \hspace{-1mm}
    \vspace{-3mm}
    \caption{Simulations for the \emph{Dolphins} graph.}
    \vspace{-3mm}
    \label{dolp}
\end{figure*}

We first present simulation results for the \emph{Dolphins} graph. Figures \ref{dolp}(a) and (b) show the results for values of $n = 15, 20$ and $25$ for each type of walker, with $\kappa$ set at $1000$. For each $n$, simulation shows the Rayleigh quotient $RQ$ approaching $\lambda_2$ of the Laplacian of the graph. This hints convergence to $\vecv_2$, which is also confirmed by the cosine similarity $CS$ approaching \emph{one}. The inset in Figure \ref{dolp}(a) plots the data on the log-log scale, and shows that $RQ$ decays roughly polynomially fast to $\lambda_2$ for each $n$. As would be expected, $\vz^n(t)$ for $n=25$ outperforms the other two choices of $n$ early on in the simulation, but its early advantage in performance is less pronounced as the simulation progresses.

Figures \ref{dolp}(c) and (d) show convergence results under different values of $\kappa = 10, 100, 1000$ and $10000$, while keeping $n=15$ for each run. Observe that the simulation results for $\kappa = 10$ do not show any convergence to the Fiedler vector. This can be attributed to the failure of $\kappa = 10$ in satisfying Proposition \ref{instability x=y} by not being sufficiently large. For larger values of $\kappa$, we observe robust convergence over a wide range of values with no visible trend in $\kappa$.
\begin{figure*}[t!]
\centering
    \vspace{-2mm}
    \hspace{-1mm}
    \subfigure[$RQ$ vs $t$ over different $n$; $\kappa = 10000$ (inset plotted on log-log scale)]{\includegraphics[scale = 0.35]{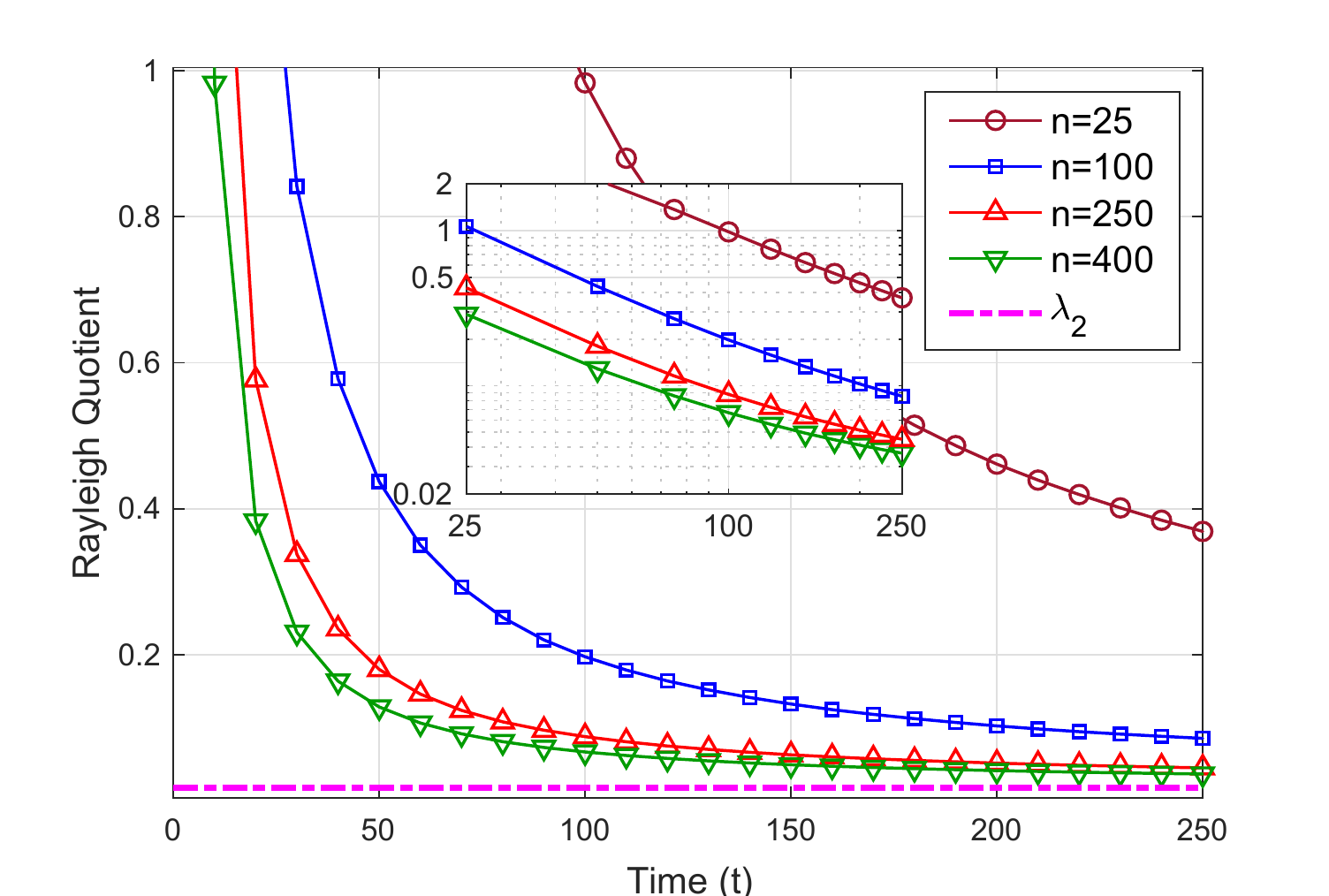}}
    \hspace{-1mm}
    \subfigure[$CS$ vs $t$ over different $n$; $\kappa = 10000$]{\includegraphics[scale = 0.35]{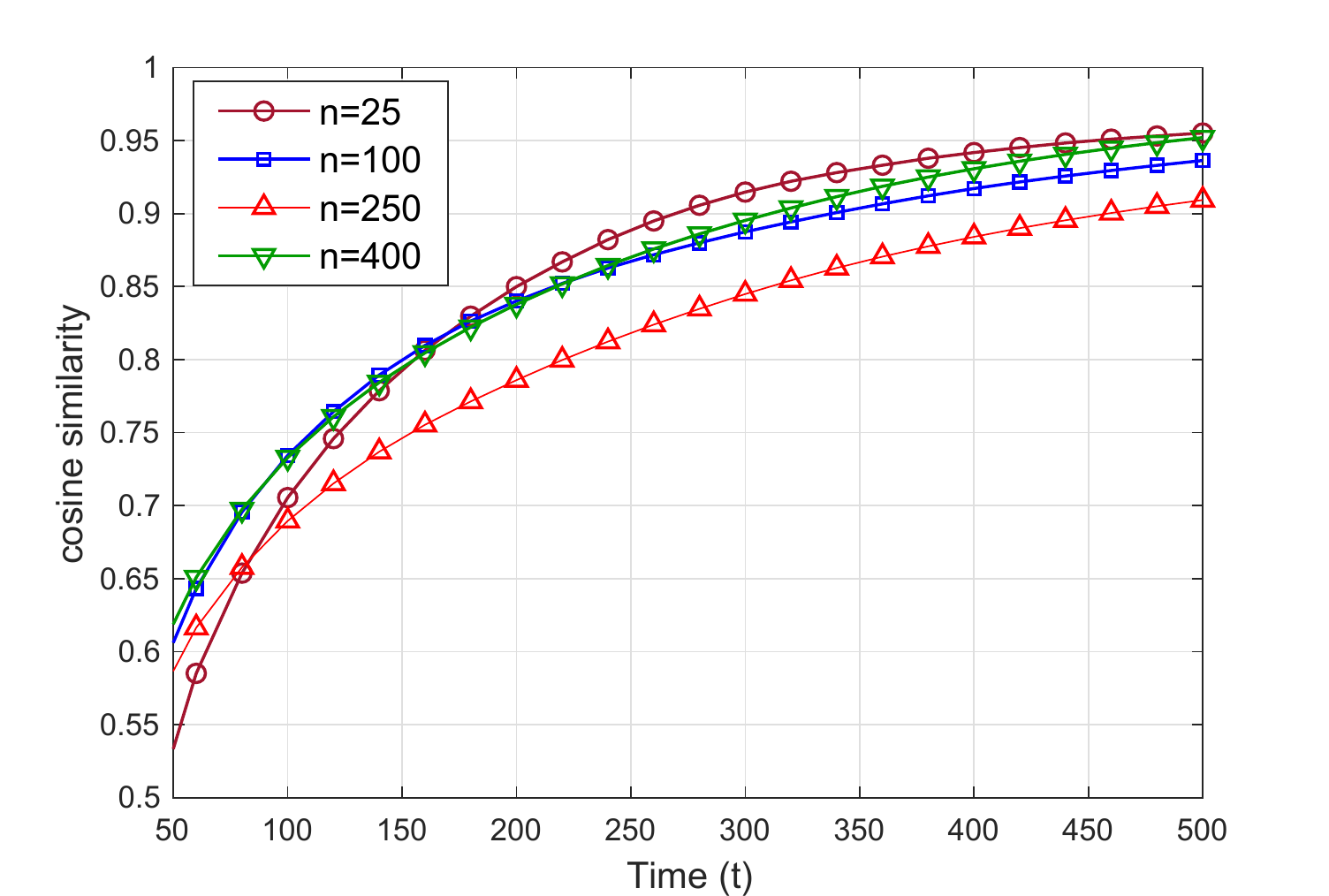}}
    \hspace{-1mm}
    \vspace{-0mm}\\

    \subfigure[$RQ$ vs $t$ over different $\kappa$; $n = 100$]{\includegraphics[scale = 0.35]{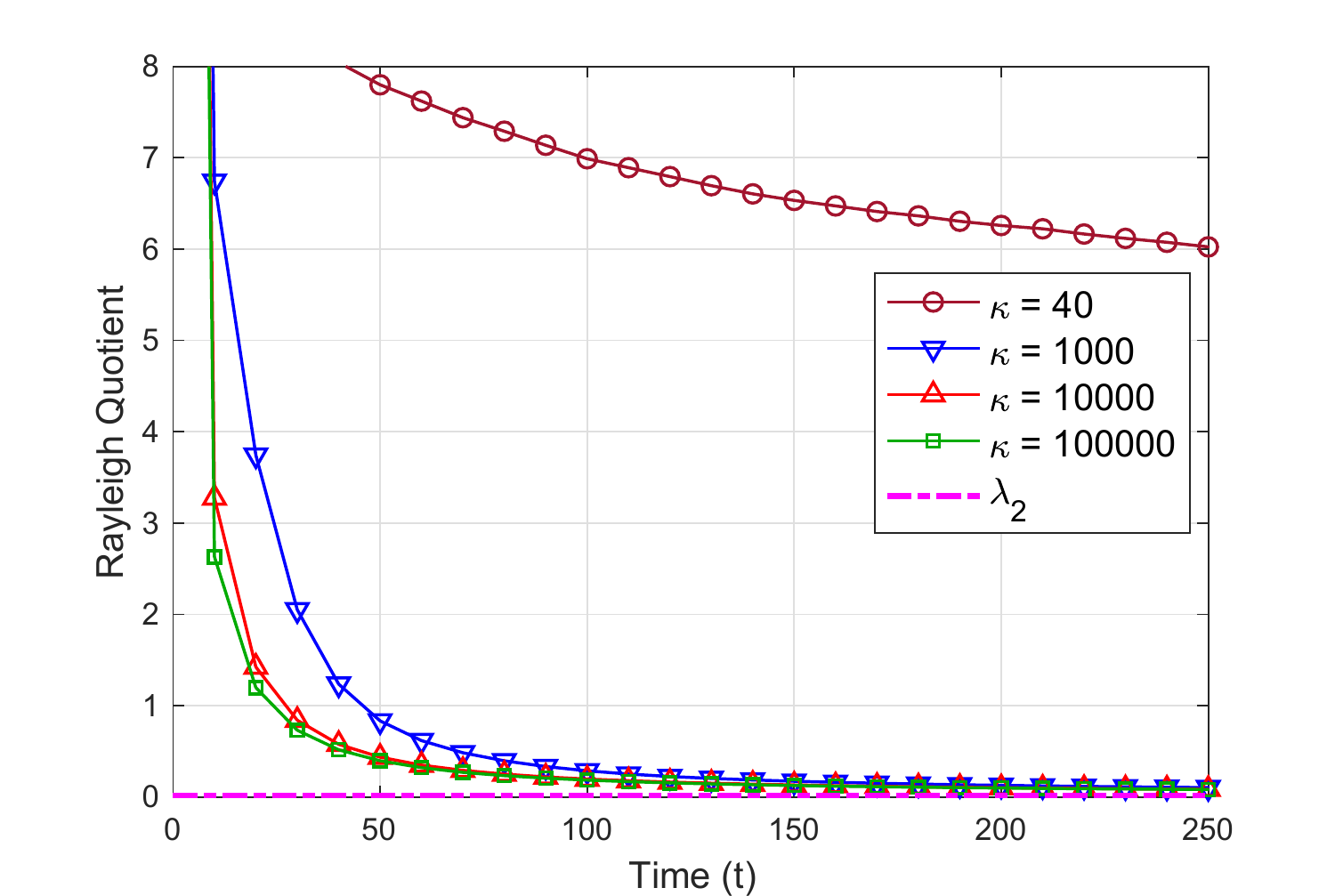}}
    \vspace{-0mm}
    \hspace{-1mm}
    \subfigure[$CS$ vs $t$ over different $\kappa$; $n = 100$]{\includegraphics[scale = 0.35]{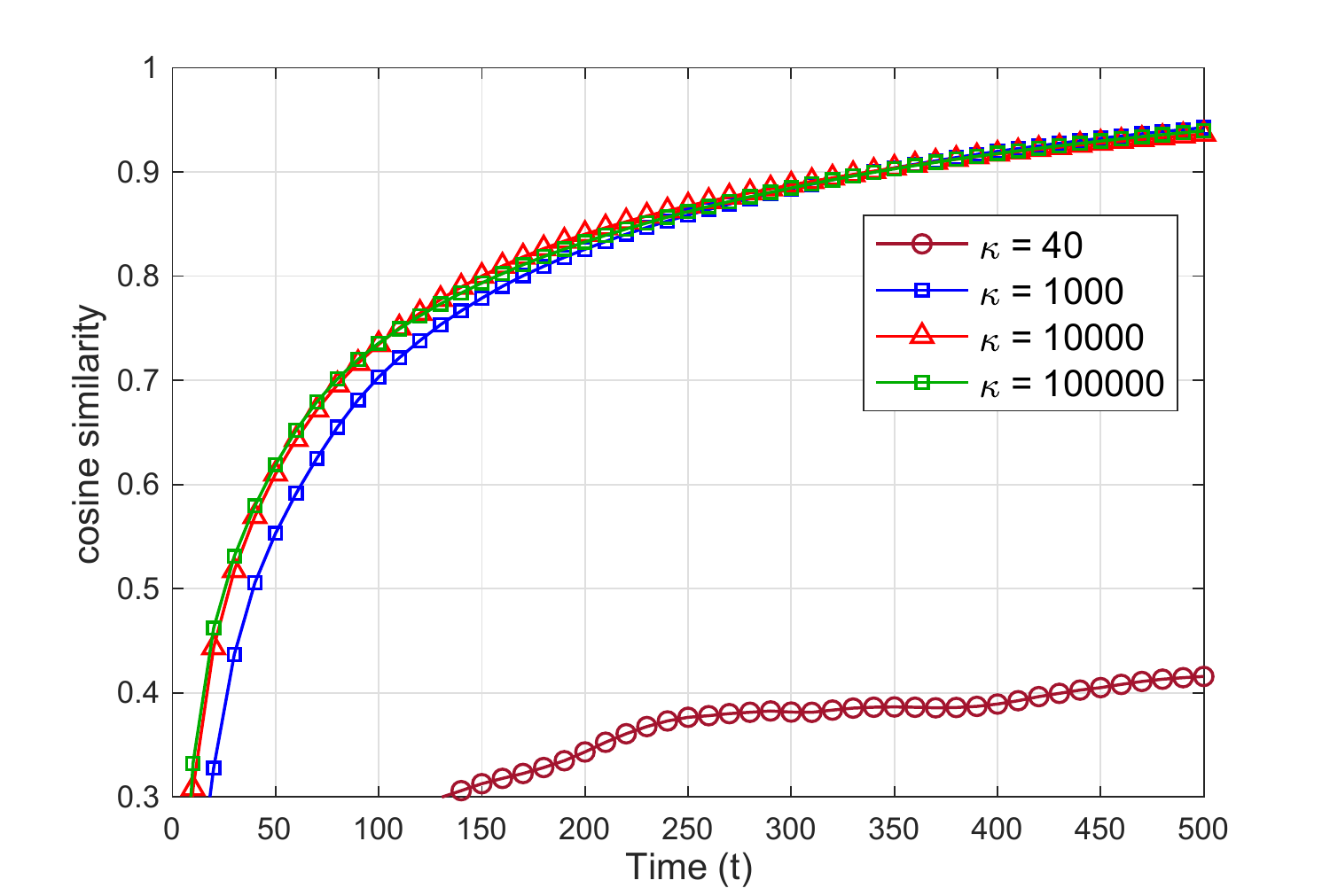}}
    \hspace{-1mm}
    \vspace{-3mm}
    \caption{Simulations for the \emph{ego-Facebook} graph.}
    \vspace{-3mm}
    \label{face}
\end{figure*}
We now present simulation results for the \emph{ego-Facebook} graph, where we perform similar simulations as before. it is interesting to see how well our interacting random walk based method scales to a larger graph. Figures \ref{face}(a) and (b) show simulation results for $n=25, 100, 250$ and $400$, for a fixed $\kappa = 10000$. As can be seen in Figure \ref{face}(a), the Rayleigh quotient for all four simulation sets converges towards $\lambda_2$. The inset in Figure \ref{face}(a) shows that similar to results for the \emph{Dolphins} graph, $RQ$ decays roughly polynomially to $\lambda_2$. The simulation for $n=25$ seems to perform worst, at least early on, and $n=400$ performs best early on. An increase in $n$ clearly shows faster convergence of $RQ$, but the big improvement from $n=25$ to $n=100$ is nowhere to be seen between $n=250$ and $n=400$. This appears to be because the probability of the stochastic process of deviating from the deterministic ODE decays exponentially fast in $n$, as theorized in Theorem \ref{fluid limit}. This is also the reason why even though \emph{ego-Facebook} is a much larger graph than \emph{Dolphins}, even values of $n$ that are less that $3\%$ of the graph size can provide good approximations, compared to about $50\%$ needed for the much smaller \emph{Dolphins} graph. In the longer run, $\hat \vz^n(t)$ for all the simulations seems to align well enough with $\vecv_2$, as can be seen in Figure \ref{face}(b). Even for small number of walkers ($n=25$), the $CS$ eventually catches up with the rest and shows the expected close alignment with $\vecv_2$. This suggests that if high precision approximation is not needed, lower values of $n$ can provide quick, yet reasonably accurate approximations of the Fiedler vector. This is useful for applications such as graph partitioning, where only the signs of the entries of the Fiedler vector matter, not the numerical entries themselves.

We also present simulations over \emph{ego-Facebook} to test the effect of $\kappa$. Figures \ref{face} (c) and (d) show the results for $\kappa = 40, 1000,10000$ and $100000$, keeping $n=100$ constant over all the run\footnote{We plot these on a different scale compared to Figures \ref{face}(a) and (b) to accommodate all the results in a single frame.}. We can observe that $\kappa = 40$ fails to be sufficiently large to satisfy Proposition \ref{instability x=y} and does not show convergence to $\vecv_2$. As observed before for the \emph{Dolphins} graph, there is no visible trend when it comes to $\kappa$ influencing the performance of the simulation, with similar performance for a wide range of higher values of $\kappa$. Overall from simulation results of both the \emph{Dolphins} and \emph{ego-Facebook} graphs, it appears that as long as $\kappa$ is large enough to satisfy Proposition \ref{instability x=y} (which it seems to satisfy as long as $\kappa$ is atleast of order $o(N)$) its value does not affect the performance of our framework, with convergence being robust for any choice of large $\kappa$.

It should be noted that in no simulated case did large $N$ or large $\kappa$ affect the ability of our stochastic process $\big( \vx^n(t),\vy^n(t) \big)_{t\geq 0}$ to closely follow the solutions $\big( \vx(t),\vy(t) \big)_{t\geq 0}$ of our ODE system \eqref{ODE F}, even though they possibly could have caused the bound in \eqref{Prob bound} from Theorem \ref{fluid limit} to grow large\footnotemark. This is because \eqref{Prob bound} is just an upper bound and in reality, the $\big( \vx^n(t),\vy^n(t) \big)$ could closely follow $\big( \vx(t),\vy(t) \big)$ over a much larger range of parameters. Moreover, it is an upper bound for the largest deviation over a finite time horizon $[0,T]$ and not for each time $t>0$, meaning that it provides a worst case bound and efforts taken to make it smaller can be overkill.

\footnotetext{Failure to converge for smaller values of $\kappa$ in Figures \ref{dolp}(c) and (d), and Figures \ref{face}(c) and (d) is attributed to failure to satisfy Proposition \ref{instability x=y}, and is an intrinsic trait of the ODE system \eqref{ODE F} itself and not the stochastic process.}

\subsection{Simulations on dynamic graphs} \label{dynamic sim}
As briefly mentioned in Section \ref{intro}, our random walk based framework is expected to perform well in the setting of dynamic graphs. In this section we provide simulation results to support that statement. We modify the \emph{Dolphins} and \emph{ego-Facebook} graphs by removing sets of nodes at certain time points during the course of each simulation run. When a node is removed, we also delete all the edges associated with it. While the instances at which these changes occur are selected for convenience of presentation, the sets of nodes to be removed are randomly generated before the simulation begins, with the condition that their removal does not affect the connectivity of the original graph. To differentiate the dynamic graphs from the original one, we rename our dynamic version of the graphs with node removal as \emph{Dolphins-dyn} and \emph{ego-Facebook-dyn}, respectively. We change the graphs three times in total by deleting a different set of nodes and all related edges at each of those instances. Overall, we delete around $21.0\%$ of nodes from the original \emph{Dolphins} graph and $12.4\%$ of nodes from the original \emph{ego-Facebook} graph, maintaining their connectivity as we do so. Table \ref{dynamic graph table} gives details about the times at which nodes are removed and the number of nodes removed.

\begin{table}[htbp]
\centering
\begin{tabular}{|c|c|c|c|c|c|}
\hline
 & ($t_1$, $\#$ nodes removed) & ($t_2$, $\#$ nodes removed) & ($t_3$, $\#$ nodes removed) \\ \hline
Dolphins-dyn  &(25, 4 nodes)    &(75, 3 nodes)		&(100, 6 nodes)	 \\ \hline
ego-Facebook-dyn  &(50, 196 nodes)    &(100, 150 nodes)		&(125, 154 nodes)    \\ \hline
\end{tabular}
\vspace{1mm}
\caption{Statistics of the dynamic changes made}\label{dynamic graph table}
\vspace{-5mm}
\end{table}

During the three pre-selected times $t_1, t_2, t_3 > 0$ when all the nodes from the corresponding sets are removed, any random walker currently located at the removed node is redistributed to the position of another randomly selected walker (of the same group) present at an unremoved node of the graph\footnote{We did this only for convenience of the simulation, and the redistribution need not follow a prescribed rule since our theory allows for convergence starting from any arbitrary initial condition.}. While plotting the Rayleigh quotient for the dynamic graphs, we represent by the black dotted lines the values of $\lambda_2(\cG)$, the algebraic connectivity of $\cG$, updated to reflect the changes in graph topology.

\begin{figure*}[t!]
\centering
    \vspace{-2mm}
    \hspace{-0mm}
    \subfigure[$RQ$ vs $t$ for \emph{Dolphins}-dyn]{\includegraphics[scale = 0.35]{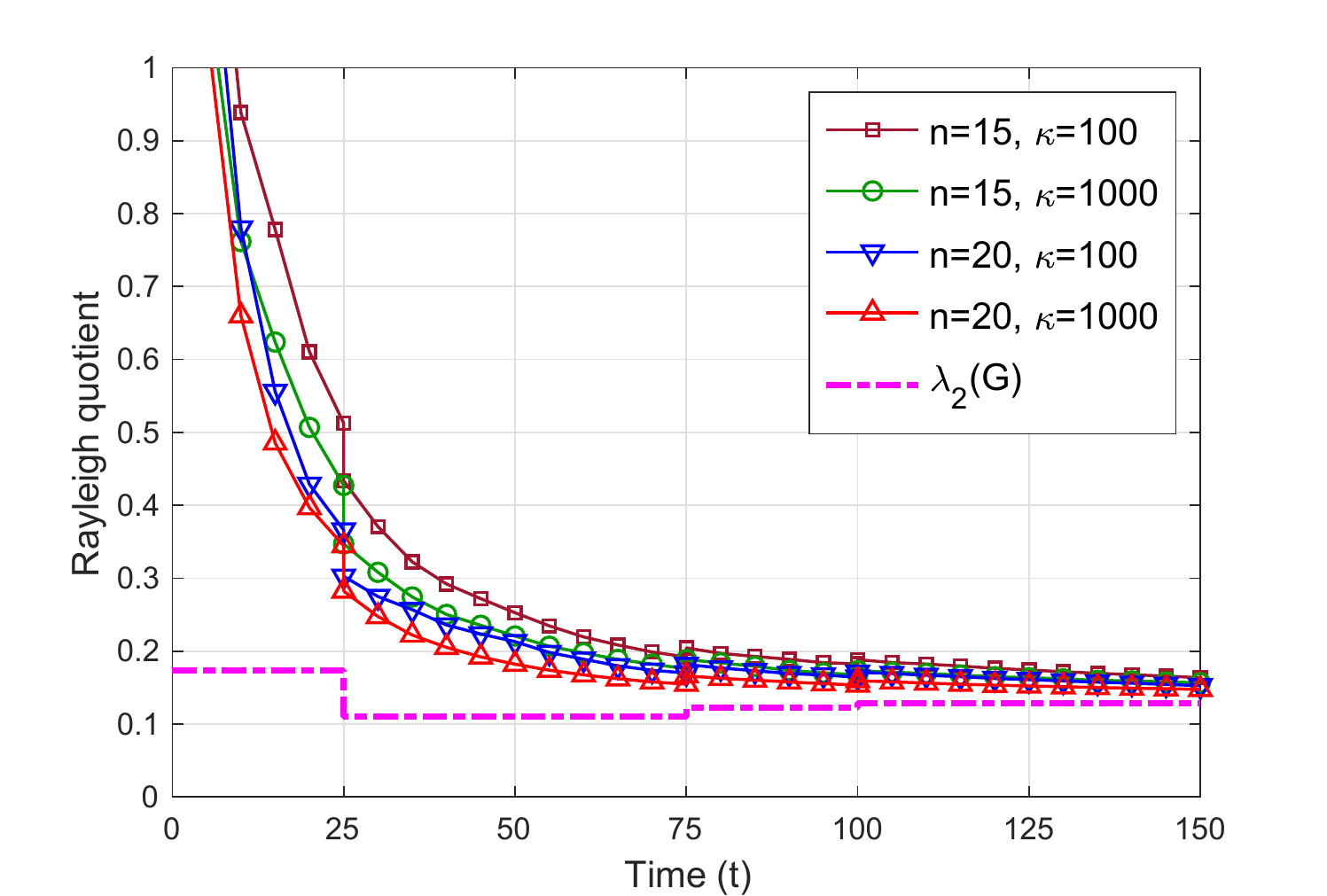}}
    \hspace{-0mm}
    \subfigure[$RQ$ vs $t$ for \emph{ego-Facebook}-dyn]{\includegraphics[scale = 0.35]{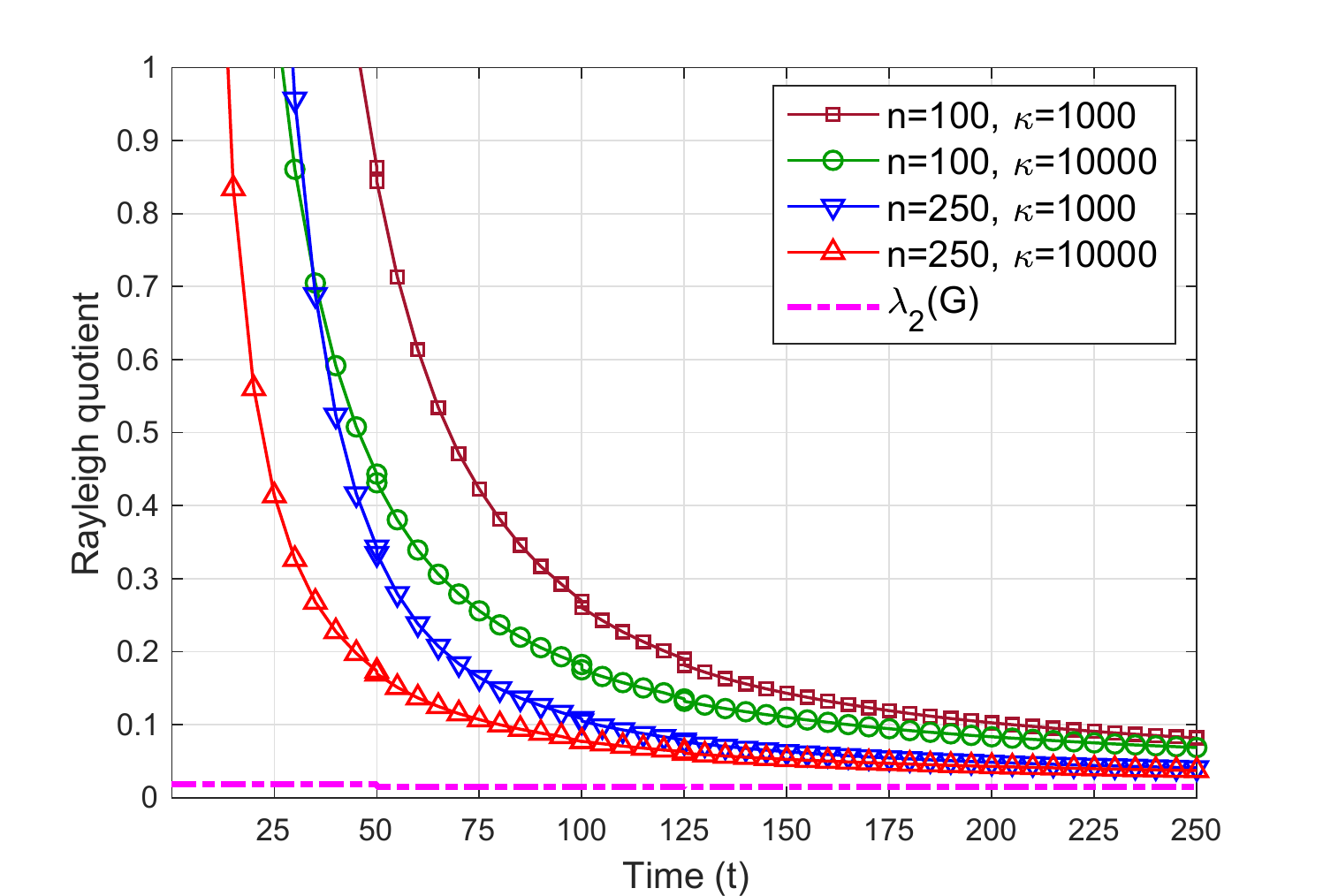}}
    \hspace{-0mm} \\

    \subfigure[$CS$ vs $t$ for \emph{Dolphins}-dyn]{\includegraphics[scale = 0.35]{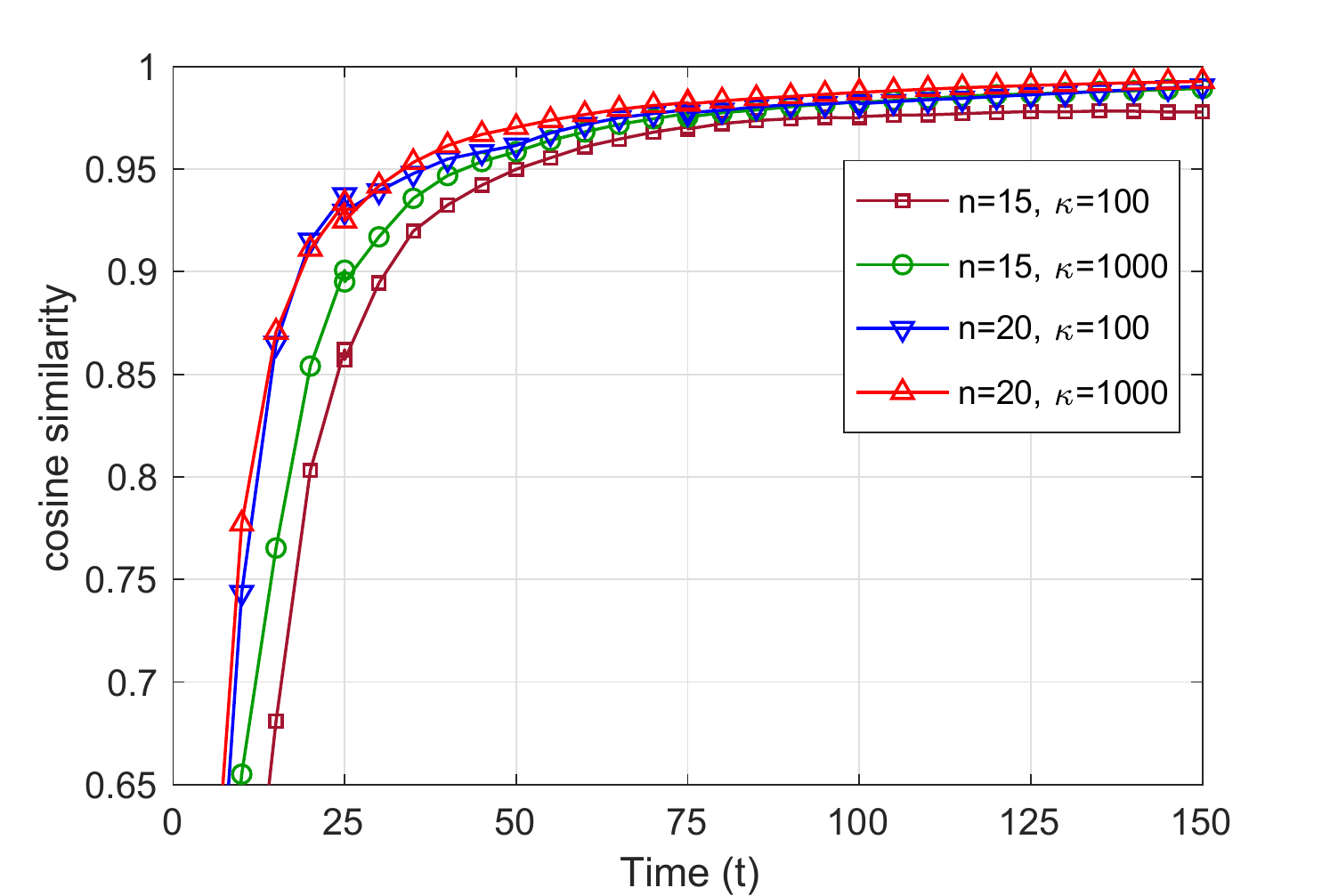}}
    \vspace{-0mm}
    \hspace{-0mm}
     \subfigure[$CS$ vs $t$ for \emph{ego-Facebook}-dyn]{\includegraphics[scale = 0.35]{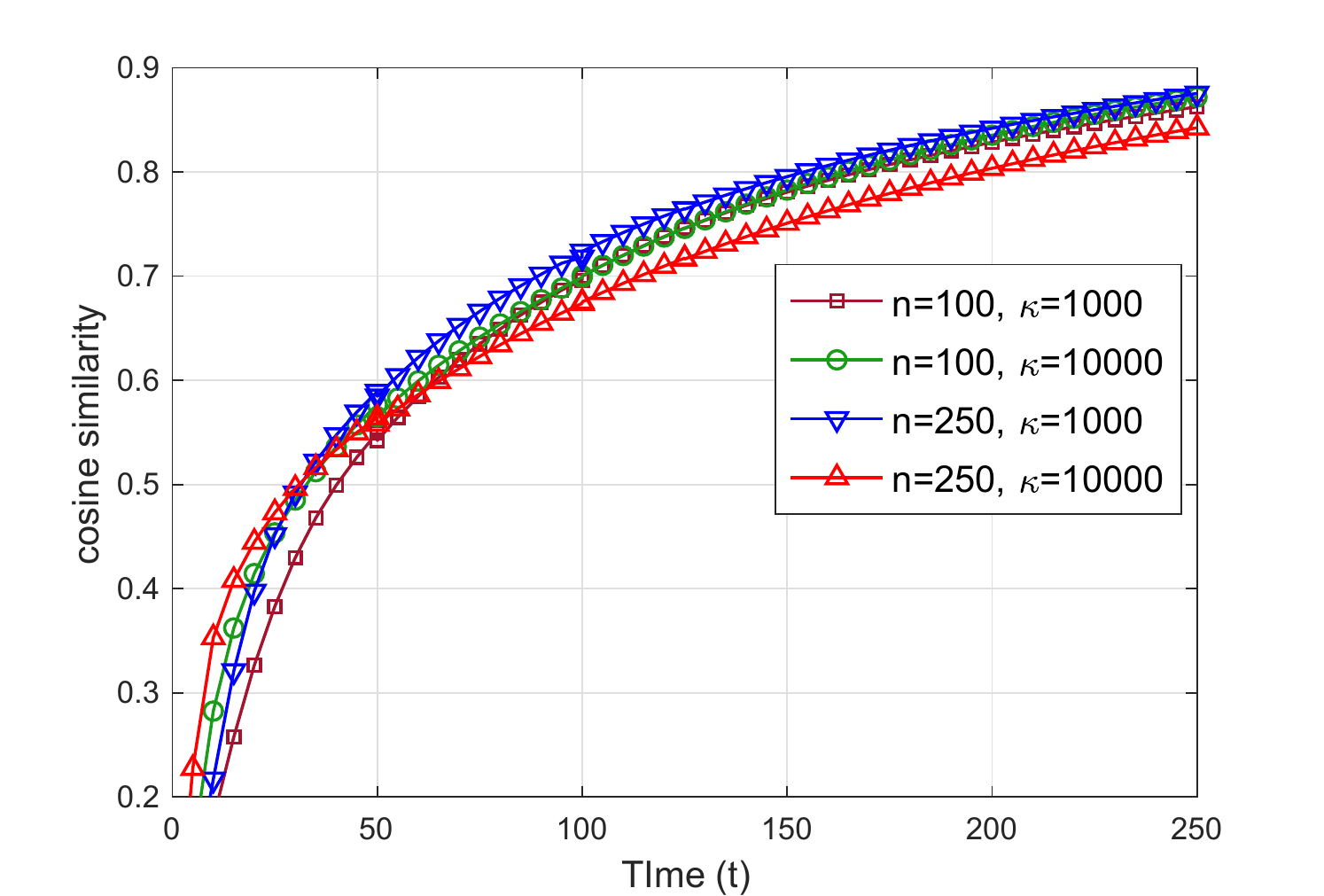}}
    \vspace{-3mm}
    \caption{Simulations for \emph{Dolphins} and \emph{ego-Facebook} graphs subject to node removals.}
    \vspace{-5mm}
    \label{dynamic_figs}
\end{figure*}

Figures \ref{dynamic_figs}(a) to (d) show our simulation results under the dynamic graph setting over a range of parameters. Figures \ref{dynamic_figs}(a) and (b) show results concerning Rayleigh quotient $RQ$ for the \emph{Dolphins-dyn} and \emph{ego-Facebook-dyn} graphs. We represent by the black dotted lines, the values of $\lambda_2$ updated to reflect the modification in graph topology. As expected, we observe small jumps in $RQ$ at the times when nodes are deleted. Better visible in Figure \ref{dynamic_figs}(a) than in Figure \ref{dynamic_figs}(b), these jumps are similar in size to the jumps in $\lambda_2(\cG)$, and apart from these, the Rayleigh quotient values monotonically decrease with time without any unexpected fluctuations. From this, we can safely conclude that even after nodes are deleted, trajectories of the simulation do not diverge from their intended paths and keep converging to the Fiedler vector (of the updated/modified graphs). While they do not diverge, the trajectories do face minor changes in terms of a small loss in progress, as can be observed across all the values of $n$ and $\kappa$ in Figure \ref{dynamic_figs}(c) at time $t=25$, where the cosine similarity $CS$ for \emph{Dolphins-dyn} drops due to deletion of nodes. On the other hand, these changes can also be minor improvements/gains in progress as can be seen more clearly for $n=250, \kappa=1000$ in Figure \ref{dynamic_figs}(d) at time $t=50$, where $CS$ for \emph{ego-Facebook-dyn} actually increases due to deletion of nodes. Therefore changes in topology can randomly be either beneficial, or disadvantageous depending on the random state the simulation is in. However, these random effects seem too minor to be a cause of concern, and there is no observable improvement or reduction in performance.

Thus, we can conclude that the framework behaves as would be expected and is robust to dynamical changes in topology, as long as the graph remains connected. It should be noted that even if the graph gets disconnected from time to time, the framework will still, by its design, converge to the Fiedler vector as long as connectivity is eventually restored again\footnote{For a disconnected graph, the Fiedler vector is not well defined, since the second eigenvalue is also \emph{zero}, and no longer strictly positive. Therefore talking about the Fiedler vector of the whole graph makes sense only when it is connected.}.

\section{Extensions to reversible Markov Chains} \label{time reversible}
In Section \ref{stochastic section} the CTMC used as the basis for constructing our stochastic process was given by $\mQ = -\mL$, the negative of the combinatorial Laplacian. We then went on the show convergence of a suitably scaled version of this process to the Fiedler vector $\vecv_2$ of $\mL$ in Section \ref{deterministic section} and discussed the long run behavior of our stochastic process in Section \ref{deterministic-stochastic}. In this section, we generalize our results by extending them to not just the combinatorial Laplacian $\mL$, but also any time reversible CTMC kernel which, as mentioned in Section \ref{intro}, is an important part of our paper's contribution.

Consider the kernel $\mQ \in \R^{N \times N}$ of an ergodic time reversible CTMC on a finite state space $\cN$ (where $|\cN| = N$), not necessarily symmetric. We denote by $\uppi \in \R^N$, its stationary distribution ($\uppi^T\mQ = 0$).  The vector $\uppi$ is also the first left eigenvector of $\mQ$. If we now define our stochastic process in Section \ref{construction} and all the consequent systems with respect to time reversible $\mQ$, we still have $\vz^n(t)^T\ones = 0$ (and $\vz(t)^T\ones = 0$) by construction. Earlier in Section \ref{stochastic section} when $\mQ$ was a symmetric matrix, its eigenvectors formed an orthogonal basis of the $N$-dimensional Euclidean space. This meant that being orthogonal to $\ones$ automatically guaranteed being in a set where $\vecv_2$ minimized the Lyapunov function $V$. This, however, is not the case for any general non-symmetric, time reversible CTMC kernel $\mQ$. We will extend our results to time reversible Markov chain kernels by introducing a specially constructed inner product which ensures orthogonality to the first left eigenvector of $\mQ$, $\uppi.$

For these purposes, we first formally define time reversible Markov chains.

\begin{definition}
A Markov chain with $\mQ$ is called `time reversible' if and only if there exists a unique $\uppi$ such that for all $i,j \in \cN, i \neq j$, the pair $(\uppi,\mQ)$ satisfies the `detailed balance equation'
$\pi_i Q_{ij} = \pi_j Q_{ji}.$
\label{DBE}
\end{definition}
For a given pair $(\uppi, \mQ)$, define an $N$-dimensional vector space $\cH_{\frac{1}{\pi}}$ endowed with an inner product $\innprod{\cdot}{\cdot}$, defined as
$$\innprod{\vx}{\vy} = \vx^T \Pi^{-1} \vy,\ \ \ \ \ \text{for any } \vx,\vy \in \R^N$$
where $\Pi = \mD_\uppi$.
\begin{lemma}\cite{Bremaud}
The pair $(\uppi,\mQ)$ is reversible if and only if $\mQ^T$, acting as a linear operator from $\cH_{\frac{1}{\pi}}$ onto itself, satisfies
$\innprod{\mQ^T\vu}{\vecv} = \innprod{\vy}{\mQ^T\vecv}.$
\label{self adjoint}
\end{lemma}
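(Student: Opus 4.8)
The plan is to reduce the self-adjointness condition to a single matrix identity and then read off detailed balance entrywise. Every step in the reduction is an equivalence, so both directions of the `if and only if' are obtained simultaneously, with no need to argue them separately.

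First I would unfold both inner products using the definition $\innprod{\va}{\vb} = \va^T \Pi^{-1}\vb$ with $\Pi = \mD_\uppi$. For arbitrary $\vu, \vecv \in \R^N$,
\begin{equation*}
\innprod{\mQ^T\vu}{\vecv} = (\mQ^T\vu)^T\Pi^{-1}\vecv = \vu^T\mQ\Pi^{-1}\vecv, \qquad \innprod{\vu}{\mQ^T\vecv} = \vu^T\Pi^{-1}\mQ^T\vecv.
\end{equation*}
Hence $\mQ^T$ is self-adjoint on $\cH_{\frac{1}{\pi}}$ precisely when $\vu^T\big(\mQ\Pi^{-1} - \Pi^{-1}\mQ^T\big)\vecv = 0$ holds for all $\vu, \vecv \in \R^N$.

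Next I would use nondegeneracy of the standard bilinear pairing: the vanishing of $\vu^T \M \vecv$ for all $\vu, \vecv$ is equivalent to $\M = \0$. Thus self-adjointness is equivalent to the matrix identity $\mQ\Pi^{-1} = \Pi^{-1}\mQ^T$. Since $\Pi$ is diagonal with strictly positive entries $\pi_i$, I would compare the $(i,j)$ entries directly: the left-hand side equals $Q_{ij}/\pi_j$ while the right-hand side equals $Q_{ji}/\pi_i$, so the identity reads $\pi_i Q_{ij} = \pi_j Q_{ji}$ for every pair $(i,j)$, which is exactly the detailed balance equation of Definition \ref{DBE}. Reading this chain of equivalences in reverse recovers self-adjointness from reversibility, closing both directions.

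There is no substantive obstacle here beyond bookkeeping; the only points deserving care are the `for all vectors $\Rightarrow$ matrix identity' step, which relies on invertibility of $\Pi$ (equivalently $\pi_i > 0$ for all $i$, guaranteed by ergodicity), and the remark that detailed balance as stated for $i \neq j$ already captures the entire matrix equation, since the diagonal entries $i=j$ reduce to the trivial identity $\pi_i Q_{ii} = \pi_i Q_{ii}$ and impose no constraint.
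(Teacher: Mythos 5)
Your proof is correct. The paper gives no proof of this lemma at all---it is cited directly from Br\'emaud---and your chain of equivalences (unfolding the inner product, reducing self-adjointness to the matrix identity $\mQ\Pi^{-1} = \Pi^{-1}\mQ^T$ via nondegeneracy of the bilinear pairing, then comparing $(i,j)$ entries to recover $\pi_i Q_{ij} = \pi_j Q_{ji}$) is precisely the standard textbook argument the citation points to, including the correct observations that invertibility of $\Pi$ rests on $\pi_i>0$ and that the diagonal entries impose no constraint. You also implicitly corrected the typo in the statement, where the $\vy$ on the right-hand side of the displayed identity should be $\vu$.
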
 
Such operators are called self-adjoint or Hermitian and always have real eigenvalues. Thus, we can define the ordering of eigenvalues of $-\mQ$ as $0 = \lambda_1 < \lambda_2 \leq \lambda_3 \leq \cdots \leq \lambda_N$, just like we did for $\mL$. The eigenvectors of self-adjoint operators are orthogonal with respect to the related inner product. Hence, for any two left eigenvectors $\vecv_i$ and $\vecv_k$ of $\mQ$, $i \neq k$, we have $\innprod{\vecv_i}{\vecv_k} = 0$. Picking $i=1$, and substituting $\vecv_1 = \uppi$, we obtain
$$\innprod{\uppi}{\vecv_k} = \uppi^T \Pi^{-1} \vecv_k = \ones^T \vecv_k = 0.$$
Therefore, the Euclidean subspace that is orthogonal to $\ones$, is also a subspace of $\cH_{\frac{1}{\pi}}$ orthogonal to $\uppi$ in terms of our new inner product, $\innprod{\cdot}{\cdot}$. Also, $\vz^n(t)^T\ones = 0 \iff \innprod{\vz^n(t)}{\uppi} = 0$, giving us the desired orthogonality with $\uppi$. This allows us to recover results from Section \ref{deterministic section} by a redefinition of the Lyapunov function $V$ in terms of $\innprod{\cdot}{\cdot}$, i.e. 
$$V(\vu) = \frac{1}{2}{\innprod{\vu}{-\mQ^T\vu}}\ \ \text{for all } \vu \in \cH,$$
and appropriately using the new inner product (i.e. $\innprod{\vu}{\vw}$ instead of $\vu^T\vw$, for any two vectors $\vu$ and $\vw$) wherever necessary. The results from Section \ref{deterministic-stochastic} never required a specific form of $\mQ$ to work as long the results from Sections \ref{stochastic section} and \ref{deterministic section} held, and now therefore hold true for any choice of time reversible $\mQ$.

From the above, we have extended our main results to be applicable to any time reversible CTMC kernel $\mQ$. It is important to note that our extension is purely based on an update in the inner product used for the analysis purposes in Section \ref{deterministic section}, and requires no modification to the way our stochastic process is constructed in Section \ref{stochastic section}, making $\mQ$ simply a \emph{plug-and-play} term in our framework.

We now show the applicability of this extension to any ergodic, time reversible DTMC kernel $\mP$ as well. The Fiedler vector for such matrices is now the eigenvector corresponding to the second largest eigenvalue of $\mP$, also known as the spectral gap \cite{Levin,aldous,Bremaud}. We start by defining a CTMC  kernel $\mQ_p$ based on the time reversible $\mP$ as $\mQ_p \triangleq \mP - \eye$\footnote{See that $\mQ_p \ones = (\mP - \eye)\ones = 0$, and $\mQ_p$ has non-negative off-diagonal entries and negative diagonal entries. It is therefore a well defined CTMC kernel}. Suppose $\pi$ is the unique stationary distribution of $\mP$, such that the pair $(\pi,\mP)$ satisfies the \emph{detailed balance equation} (DBE) from Definition \ref{DBE}. Then, the pair $(\pi, \mQ_p)$ also satisfies the DBE and $\mQ_p$ is a time reversible CTMC. This means that all our theoretical results stand for $\mQ_p$ if we use it as the basis for generating our stochastic process in Section \ref{construction}. Therefore, $\vz^n(t)$ for a system of random walkers, which walk according to the CTMC $\mQ_p = \mP - \eye$ and interact in the same manner as in the previous sections, will approximate $\vecv_2$ of the DTMC $\mP$\footnote{Since eigenvectors of $\mQ_p$ and $\mP$ are the same.} in the long run.

We can also apply this extension to the random walk Laplacian $L^{rw} = \eye - \mD^{-1}\mA$, and the normalized Laplacian $\cL = \eye - \mD^{-1/2}\mA\mD^{-1/2}$, introduced earlier in Section \ref{laplacians pre}. We can do this by first observing that $\mD^{-1}\mA$ is a DTMC kernel with the degree distribution vector $\frac{1}{\vd^T\ones}\vd$ serving as its stationary distribution. The pair $\left( \frac{1}{\vd^T\ones}\vd, \mD^{-1}\mA \right)$ is also time reversible \cite{aldous, Bremaud} and satisfies the DBE. Thus, setting $\mQ_{rw} \triangleq \mD^{-1}\mA - \eye = -\mL^{rw}$ as our CTMC kernel extends all the results of the paper to the random walk Laplacian and allows us to obtain approximations for $\vecv_2^{rw}$, the Fiedler vector of the random walk Laplacian. Given a component $[\vecv_2^{rw}]_i$, the corresponding component of the Fiedler vector $\bar \vecv_2$ of the normalized Laplacian $\cL$ can be obtained by setting $[\bar \vecv_2]_i = [\vecv_2^{rw}]_i/\sqrt{d(i)}$, due to the similarity relationship between the two matrices as shown in Section \ref{laplacians pre}. Therefore, once $[\vecv_2^{rw}]_i$ is approximated, a simple localized computation involving only the degree of the concerned node allows us to obtain $[\bar \vecv_2]_i$ as well.  With this, we can now use our framework to approximate (on-the-fly) the Fiedler vectors of all the three important graph Laplacians as special cases of our extended theoretical results.


\section{Concluding remarks} \label{conclusion}

In this paper, given any time reversible Markov Chain kernel, we have detailed the construction of a stochastic process based on interacting random walkers.
Random walk algorithms usually relate to the leading/principal eigenvectors of their respective kernels. In fact, the field of Markov chain Monte Carlo (MCMC) is dedicated to the problem of using a random walk to sample according to a given probability distribution. Famous examples include the Metropolis Hastings Random walk and the Gibbs sampler \cite{Hastings1997,Jun_MCMC_Book,Peskun1973, LeeSIGMETRICS12}, which construct Markov chains and leverage the ergodic theorem to sample according to its first eigenvector. While usually these are restricted to sampling on undirected graphs, \cite{NMMC} samples from a directed graph using a novel method which involves \emph{mapping} a target distribution to the quasi-stationary distribution (QSD) of a sub-stochastic Markov chain. However, this technique also essentially leverages properties of a leading eigenvector, which is the QSD in this case. No such random walk type technique has been applied towards approximating the second eigenvector. By relating our stochastic process to a deterministic ODE system we show convergence to the second eigenvector, making our random walk based method a first in literature.


\section{Acknowledgments}
The authors would like to thank the anonymous reviewers for their constructive comments and suggestions that greatly improved the quality of this paper. This work was supported in part by National Science Foundation under Grant Nos. IIS-1910749 and CNS-1824518.

\bibliographystyle{ACM-Reference-Format}
\bibliography{references}

\newpage
\appendix
\section{Appendix for Section 3}
\subsection{Proof of Proposition 3.1:}
We only show that \eqref{mean field X} implies \eqref{F_x}, since the steps relating \eqref{mean field Y} and \eqref{F_y} are exactly the same in terms of algebra.
By substituting \eqref{mean field X} into \eqref{rate x}, we obtain
\begin{equation}
F_x(\vx,\vy) 
= \sum_{i \in \cN} \sum_{j \in \cN, j \neq i}(\ve_i-\ve_j)Q_{ji}x_j    
+    \sum_{i \in \cN} \sum_{j \in \cN, j \neq i}(\ve_i-\ve_j) (\kappa x_jy_j)x_i 
= \vu + \vw,
 \label{u+w}
\end{equation}
where we use $\vu \in \R^N$ and $\vw \in \R^N$ to denote the two summation terms. Observe that the $k^\text{th}$ entry of $\vu$ can be written as
\begin{equation*}
u_k = \sum_{j \neq k} (1-0)Q_{jk}x_j + \Big[\sum_{i \neq k}(0-1) Q_{ki}\Big]x_k 
= \sum_{j \neq k} Q_{jk}x_j + Q_{kk} x_k 
= \sum_{j \in \cN} Q_{jk}x_j = [\mQ^T\vx]_k,
\end{equation*}
suggesting that $\vu = [u_k]$ can be written as $\vu = \mQ^T\vx$.
Similarly, the $k^\text{th}$ entry of $\vw$ can be written as
\begin{equation*}
\begin{split}
w_k 
&= \Big[\sum_{j \neq k} (1-0) \kappa x_jy_j\Big]x_k + \sum_{i \neq k}(0-1)(\kappa x_k y_k)x_i \\ 
&=  \Big[\sum_{j \neq k} (1-0) \kappa x_jy_j\Big]x_k + (\kappa x_k y_k)x_k 
+ \sum_{i \neq k}(0-1)(\kappa x_k y_k)x_i - (\kappa x_k y_k)x_k \\ 
&=  \Big[\sum_{j \in \cN} \kappa x_jy_j\Big]x_k - \kappa x_k y_k \sum_{i \in \cN} x_i = [\kappa \vx^T\vy ]x_k - \kappa x_k y_k,
\end{split}
\end{equation*}
suggesting that $\vw = [w_k]$ can be written as $\vw = [\kappa \vx^T \vy]\vx - \kappa \mD_\vy \vx$.
Substituting the expressions for $\vu$ and $\vw$ in \eqref{u+w}, we obtain \eqref{F_x}, which completes the proof.
\label{proof 3.1}

\subsection{Proof of Proposition \ref{Lipschitz prop}:}
For this proof, we make a change of notation. Vectors $(\vx,\vy) \in \R^{2N}$ will now be written as $\begin{bmatrix} \vx	\\  \vy  \end{bmatrix}.$ Similarly, we have 
$$F(\vx,\vy) = \begin{bmatrix} F_x(\vx,\vy)	\\  F_y(\vx,\vy)  \end{bmatrix} = \begin{bmatrix}  \mQ^T - \kappa \mD_\vy  &0	\\ 0  &\mQ^T - \kappa \mD_\vx   \end{bmatrix}\begin{bmatrix} \vx	\\  \vy  \end{bmatrix} + [\kappa\vx^T\vy]\begin{bmatrix} \vx	\\  \vy  \end{bmatrix}.$$

Denote by $\|\cdot \|$ the $2-$norm for any vector in $\R^{2N}$, and the induced matrix norm for any $2N \times 2N$ dimensional matrix. Using this notation for any $(\vx,\vy) \in S$ and $(\vu,\vw) \in S$, by the mean value theorem, there exists a point $(\va,\vb) \in S$ on the line segment joining $(\vx,\vy)$ and $(\vu,\vw)$ such that

\begin{equation}
\begin{split}
\|F(\vx,\vy) - F(\vu,\vw)\| = \| \J_F(\va,\vb) \| \Big\|\begin{bmatrix} \vx	\\  \vy  \end{bmatrix} - \begin{bmatrix} \vu	\\  \vw  \end{bmatrix}   \Big\|,
\end{split} 
\end{equation}
where $\J_F(\va,\vb))$ is the Jacobian matrix of $F$ evaluated at $(\va,\vb)$, given by
\begin{equation*}
\J_F(\va,\vb) =
\begin{bmatrix}
\mQ	& 0\\
0	& \mQ
\end{bmatrix}
+
\kappa \begin{bmatrix}
    \va \vb^T + \mD_\va\mD_\vb - \mD_\vb      & \va \va^T - \mD_\va \\
    \vb \vb^T - \mD_\vb       & \vb \va^T + \mD_\vb\mD_\va-\mD_\va
\end{bmatrix}.
\end{equation*}
All elements of the Jacobian matrix are bounded uniformly over the line segment joining $(\vx,\vy)$ and $(\vu,\vw)$, which in turn means that  $\| \J_F(\va,\vb) \|$ is bounded too. Therefore, $F:S\rightarrow \R^{2N}$ is Lipschitz continuous which is necessary and sufficient for the existence and uniqueness of solutions to \eqref{ODE F} \cite{ODE_Book_Hirsch, ODE_Book_Miller}.
\label{proof 3.2}
\subsection{Proof of Theorem \ref{fluid limit}} \label{stochastic-deterministic}
\begin{proof}(Theorem \ref{fluid limit})
Before the main part of the proof, we borrow some results from literature which will be used later.
\begin{proposition} (Proposition 5.2 in \citep{Draief})
Let $Y$ be a Poisson process with unit rate $(\text{i.e. } Y(t) \sim \text{Poisson}(t)$ for all $t\geq0)$. Then for any $\epsilon > 0$ and $T>0$,
$$\mP \Big( \sup_{0 \leq t \leq T} |Y(t) - t| \geq \epsilon \Big) \leq 2\epsilon \exp{\big( -T \cdot h(\epsilon/T) \big)} $$
where $h(x) = (1+x)\log(1+x) - x$.
\label{Poisson bound}
\end{proposition}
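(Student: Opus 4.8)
The plan is to prove this by a standard Chernoff/exponential-martingale argument, treating the two one-sided events $\{\sup_{t}(Y(t)-t)\ge\epsilon\}$ and $\{\sup_{t}(t-Y(t))\ge\epsilon\}$ separately and then combining them by a union bound. The central object is the exponential martingale $N_\theta(t)\triangleq\exp\!\big(\theta Y(t)-t(e^\theta-1)\big)$, which for every $\theta\in\R$ is a nonnegative martingale with $\E[N_\theta(t)]=N_\theta(0)=1$. This is the Poisson analogue of the usual exponential supermartingale and follows from the independent-increments structure of $Y$ together with the increment moment generating function $\E\big[e^{\theta(Y(s+u)-Y(s))}\mid\cF_s\big]=e^{u(e^\theta-1)}$, so I would establish it first as a short preliminary.

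For the upper tail I would fix $\theta>0$. The genuine difficulty here is that the supremum runs over the whole interval $[0,T]$ rather than over the endpoint, and the compensator $t(e^\theta-1)$ is itself time dependent, so a naive Chernoff bound at a single time does not control the running maximum. I would resolve this by observing that whenever $Y(t)-t\ge\epsilon$ for some $t\le T$ one has $N_\theta(t)\ge\exp\!\big(\theta\epsilon-t(e^\theta-1-\theta)\big)\ge\exp\!\big(\theta\epsilon-T(e^\theta-1-\theta)\big)$, where the last inequality uses $e^\theta-1-\theta>0$ (strict convexity of $e^\theta$) together with $t\le T$. Hence the event is contained in $\{\sup_{0\le t\le T}N_\theta(t)\ge\exp(\theta\epsilon-T(e^\theta-1-\theta))\}$, and Doob's maximal inequality for the nonnegative martingale $N_\theta$, namely $\mP(\sup_{t\le T}N_\theta(t)\ge a)\le \E[N_\theta(T)]/a=1/a$, yields $\mP(\sup_t(Y(t)-t)\ge\epsilon)\le\exp(-\theta\epsilon+T(e^\theta-1-\theta))$. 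Equivalently, one may apply optional stopping to $N_\theta$ at the first passage time of level $\epsilon$.

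The next step is the one-variable optimization. Minimizing $-\theta\epsilon+T(e^\theta-1-\theta)$ over $\theta>0$ gives $\theta^\ast=\log(1+\epsilon/T)$, and substituting back collapses the exponent to exactly $-\big[(T+\epsilon)\log(1+\epsilon/T)-\epsilon\big]=-T\,h(\epsilon/T)$, which recovers the stated rate function. For the lower tail I would run the mirror-image argument with $\theta<0$, now stopping at the continuous crossing time of $t-Y(t)=\epsilon$; the optimization produces the rate $(\epsilon/T)+(1-\epsilon/T)\log(1-\epsilon/T)$, and a short Taylor-expansion (or convexity) comparison shows this dominates $h(\epsilon/T)$, so this tail is likewise bounded by $\exp(-T\,h(\epsilon/T))$.

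Combining the two tails by a union bound produces the prefactor $2$, giving $\mP(\sup_{0\le t\le T}|Y(t)-t|\ge\epsilon)\le 2\exp(-T\,h(\epsilon/T))$, which already implies the stated inequality whenever $\epsilon\ge 1$ (precisely the regime in which the estimate is invoked in the fluid-limit proof, where the relevant $\epsilon$ scales with $n$), since there $2\le 2\epsilon$. I expect the main obstacle to be exactly the passage from an endpoint estimate to a uniform-in-time estimate — justifying that the running supremum of $Y(t)-t$ can be squeezed into a single Chernoff bound despite the moving compensator — and this is where the maximal inequality, or equivalently optional stopping at the first passage time, does the essential work; the remaining pieces are the routine exponential-moment identity for the Poisson process and the explicit optimization that reproduces $h$.
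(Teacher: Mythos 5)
The paper does not actually prove this proposition --- it is imported verbatim (as Proposition 5.2) from the cited book of Draief and Massouli\'e --- so there is no internal proof to compare against; your argument is, in substance, the standard textbook derivation of that result, and it is correct. The exponential martingale $N_\theta(t)=\exp\big(\theta Y(t)-t(e^\theta-1)\big)$, the containment of the running-supremum event in $\big\{\sup_{0\le t\le T}N_\theta(t)\ge \exp\big(\theta\epsilon-T(e^\theta-1-\theta)\big)\big\}$ (valid since $e^\theta-1-\theta>0$ for $\theta\ne0$ and $N_\theta$ is c\`adl\`ag), Doob's maximal inequality, the optimizers $\theta^\ast=\log(1\pm\epsilon/T)$, and the one-sided rate comparison $x+(1-x)\log(1-x)\ge h(x)$ on $[0,1]$ (the difference vanishes at $x=0$ and has derivative $-\log(1-x^2)\ge0$; and for $\epsilon>T$ the lower-tail event is empty) are all sound. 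One remark worth making explicit: your derivation yields the prefactor $2$, i.e.\ $\mP\big(\sup_{0\le t\le T}|Y(t)-t|\ge\epsilon\big)\le 2\exp\big(-T\,h(\epsilon/T)\big)$, and this is in fact the correct form of the bound. The printed prefactor $2\epsilon$ cannot hold for small $\epsilon$ (as $\epsilon\to0$ the left side tends to $1$ while the right side tends to $0$), so it is evidently a typo for $2$ --- consistent with how the proposition is actually invoked in Appendix A.3, where $2N(N-1)$ one-sided applications produce the $4N(N-1)$ prefactor, i.e.\ the prefactor-$2$ version. Your reconciliation via $2\le2\epsilon$ for $\epsilon\ge1$ is thus unnecessary but harmless, and in any case covers the regime ($\epsilon$ proportional to $n$) in which the fluid-limit proof uses the estimate.
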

We also state Gronwall's inequality.
\begin{lemma} (Gronwall's inequality)
Let $f$ be a bounded, real valued function on $[0,T]$ satisfying
$f(t) \leq a +b\int_0^t u(s)ds$ for all  $t \in [0,T]$,
where $a$ and $b$ are non-zero, real valued constants. Then,
$ f(t)\leq a \exp (bt)$ for all $t \in [0, T]$.
\label{Gronwall}
\end{lemma}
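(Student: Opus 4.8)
The plan is to reduce this integral inequality to a linear differential inequality and solve the latter with an integrating factor. Throughout I read the integrand as $f(s)$ (the symbol $u$ in the statement being a typo for $f$), and I treat the case relevant to the application, where $b = M > 0$ is the Lipschitz constant from Proposition~\ref{Lipschitz prop}.

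First I would set $g(t) \triangleq \int_0^t f(s)\,ds$ for $t \in [0,T]$. Since $f$ is bounded (hence integrable) on $[0,T]$, the function $g$ is well defined and absolutely continuous, with $g(0) = 0$ and $g'(t) = f(t)$ for almost every $t$. The hypothesis then reads $g'(t) \le a + b\,g(t)$, i.e. $g'(t) - b\,g(t) \le a$. This is where the boundedness assumption earns its keep: it is what guarantees that $g$ is differentiable almost everywhere, so that rewriting the hypothesis as a differential inequality is legitimate.

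Next I would multiply both sides by the strictly positive factor $e^{-bt}$ so as to recognize the left-hand side as an exact derivative, namely $\frac{d}{dt}\big(e^{-bt} g(t)\big) = e^{-bt}\big(g'(t) - b\,g(t)\big) \le a\, e^{-bt}$. Integrating from $0$ to $t$ and using $g(0) = 0$ gives $e^{-bt} g(t) \le \frac{a}{b}\big(1 - e^{-bt}\big)$, hence $g(t) \le \frac{a}{b}\big(e^{bt} - 1\big)$. Substituting this back into the original bound $f(t) \le a + b\,g(t)$ yields $f(t) \le a + a\big(e^{bt} - 1\big) = a\,e^{bt}$, which is the claim.

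An alternative route, which displays the role of boundedness even more transparently, is to iterate the inequality into itself: substituting the bound for $f(s)$ inside its own integral $n$ times produces $f(t) \le a\sum_{k=0}^{n-1}\frac{(bt)^k}{k!} + b^n R_n(t)$, where $R_n(t)$ is an $n$-fold iterated integral of $f$ over the simplex $\{0 \le s_n \le \cdots \le s_1 \le t\}$. Bounding $|f| \le K$ controls the remainder, $|b^n R_n(t)| \le K\,\frac{(bt)^n}{n!} \to 0$, while the finite sum converges to $a\,e^{bt}$. The main point requiring care — and the only genuine obstacle in an otherwise routine argument — is the sign of $b$: the step multiplying $g(t) \le \frac{a}{b}(e^{bt}-1)$ by $b$ preserves the inequality precisely when $b > 0$, which is exactly the case in the application. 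I would remark that $b = 0$ gives the trivial bound $f(t) \le a = a\,e^{0\cdot t}$, and that a sign-adjusted variant would be needed were $b < 0$ ever required.
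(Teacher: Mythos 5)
Your proof is correct, and there is in fact nothing in the paper to compare it against: the authors state Gronwall's inequality in Appendix A.3 as a borrowed, classical result and never prove it, using it only as a black box between \eqref{apply gronwall} and the final probability bound in Theorem \ref{fluid limit}. Your integrating-factor argument --- setting $g(t)=\int_0^t f(s)\,ds$, noting that $g$ is absolutely continuous with $g'=f$ almost everywhere, rewriting the hypothesis as $\frac{d}{dt}\bigl(e^{-bt}g(t)\bigr)\le a\,e^{-bt}$, integrating, and substituting back into $f(t)\le a+b\,g(t)$ --- is the standard textbook proof, and your reading of the stray symbol $u$ as $f$ is exactly how the lemma is invoked in the paper, where the same norm appears on both sides of \eqref{apply gronwall}. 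Two of your side remarks deserve emphasis, since they actually sharpen the paper's statement. First, you are right that the hypothesis ``$a$ and $b$ non-zero'' is too permissive: the conclusion can fail for $b<0$ (take $b=-1$, $a=1$, and an $f$ that is very negative on an initial interval, which inflates the bound $a+b\int_0^t f(s)\,ds$ at later times and permits $f(t)$ to far exceed $a\,e^{bt}$), and your final substitution step $b\,g(t)\le a\bigl(e^{bt}-1\bigr)$ genuinely uses $b>0$; fortunately the application takes $b=M>0$, the Lipschitz constant of Proposition \ref{Lipschitz prop}, so no generality is lost. Second, a pedantic caveat: boundedness of $f$ alone does not guarantee integrability without measurability, but in the application $f(t)=\|(\vx^n(t),\vy^n(t))-(\vx(t),\vy(t))\|$ is a c\`adl\`ag function of $t$, hence measurable, so your construction of $g$ is legitimate where it is needed. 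Your iterated-substitution alternative is also valid and is the route that generalizes to the integral-form Gronwall lemma with non-constant coefficients, though the paper needs only the constant-coefficient case.
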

Observe that since $Q_{ji} = -L_{ji} = A_{ji}$ is no bigger than 1 for all $j \neq i$ and $x_i, y_i \in (0,1)$ for all $i \in \cN$,we can bound the terms in \eqref{rate x} and \eqref{rate y}, and obtain\footnote{for $\mQ\neq -\mL$, the entries are still bounded by some positive constant given by $C = \max_{i,j \in \cN} Q_{ji}$. In this case, we use $C$ instead of $1$ to bound $Q_{ji}$. The rest of the steps remain the same.}
\begin{equation}
\bar Q^n_{x:j \to i} (\vx,\vy) \leq n(1+\kappa) \ \ \text{  and  } \ \ \bar Q^n_{y:j \to i} (\vx,\vy) \leq n(1+\kappa).
\label{Q bounds}
\end{equation}
We now proceed with the main body of our proof.

For a unit rate Poisson process $Y(t)$, its centered version is given by $\hat Y(t) \triangleq Y(t) - t$ for all $t\geq0$. Let $Y_{ji}^x$ and $Y_{ji}^y$ for all $i,j\in \cN,\ j \neq i$ be independent Poisson processes of unit rate. Let $\hat{Y}_{ji}^x$ and $\hat{Y}_{ji}^y$ be their centered versions, i.e. $\hat{Y}_{ji}^x (t) = Y_{ji}^x (t) - t$ and $\hat{Y}_{ji}^y (t) = Y_{ji}^y (t) - t$ for any $t \geq 0$.

The continuous time Markov chain $\{\vx^n(t), \vy^n(t) \}_{t\geq 0}$ can be constructed for any $t \geq 0$ as

\begin{equation}
\begin{split}
\big( \vx^n(t), \vy^n(t) \big) &= \big( \vx^n(0), \vy^n(0) \big) 
+ \sum_{i \in \cN} \sum_{j \neq i} \Big(   \frac{\ve_i - \ve_j}{n}   ,  0  \Big)Y_{ji}^x \Big(  \int_0^t \bar Q^n_{x:j \to i} \big( \vx^n(s), \vy^n(s)\big) \Big) \\
&+ \sum_{i \in \cN} \sum_{j \neq i} \Big(   0  ,   \frac{\ve_i - \ve_j}{n}  \Big)Y_{ji}^y \Big(  \int_0^t \bar Q^n_{y:j \to i} \big( \vx^n(s), \vy^n(s)\big) \Big).
\end{split}
\label{non homo poisson representation}
\end{equation}
Indeed, for any $i,j \in \cN,\ j \neq i$, $\Big(\frac{\ve_i - \ve_j}{n}   ,  0 \Big)$ and $\Big(   0  ,   \frac{\ve_i - \ve_j}{n}  \Big)$ are the admissible jumps. From (\ref{rate x}) and (\ref{rate y}), at any instant $s\in[0,t]$, these jumps take place with rate $\bar Q^n_{x:j \to i} \big( \vx^n(s), \vy^n(s)\big) $ and $\bar Q^n_{y:j \to i} \big( \vx^n(s), \vy^n(s)\big) $. The Poisson processes that counts the number of such jumps up till time $t \in [0,\infty)$ are therefore non-homogeneous  Poisson processes, which are given by $Y_{ji}^x \Big( \int_0^t \bar Q^n_{x:j \to i} \big( \vx^n(s), \vy^n(s)\big) \Big)$ and $Y_{ji}^y \Big(  \int_0^t \bar Q^n_{y:j \to i} \big( \vx^n(s), \vy^n(s)\big) \Big)$.

(\ref{non homo poisson representation}) can be rewritten in terms of $F$, $\hat Y_{ji}^x$ and $\hat Y_{ji}^y$ (the centered verions of $ Y_{ji}^x$ and $ Y_{ji}^y$) as

\begin{equation}
\begin{split}
\big( \vx^n(t), \vy^n(t) \big) &= \big( \vx^n(0), \vy^n(0) \big) 
+ \sum_{i \in \cN} \sum_{j \neq i} \Big(   \frac{\ve_i - \ve_j}{n}   ,  0  \Big)\hat{Y}_{ji}^x \Big(  \int_0^t \bar Q^n_{x:j \to i} \big( \vx^n(s), \vy^n(s)\big) \Big) \\
&+ \sum_{i \in \cN} \sum_{j \neq i} \Big(   0  ,   \frac{\ve_i - \ve_j}{n}  \Big)\hat{Y}_{ji}^y \Big(  \int_0^t \bar Q^n_{y:j \to i} \big( \vx^n(s), \vy^n(s)\big) \Big) 
+ \int_0^t F\big(\vx(s),\vy(s)\big)ds.
\end{split}
\label{stochastic rep}
\end{equation}
Subtracting (\ref{Deterministic F(x,y)}) from (\ref{stochastic rep}) and taking norm yields
\begin{equation}
\begin{split}
\big\|   \big( \vx^n(t),\vy^n(t) \big) - \big( \vx(t), \vy(t) \big)  \big \| &\leq \big\|   \big( \vx^n(0), \vy^n(0) \big) - \big( \vx(0), \vy(0) \big)  \big \| \\
&+ \int_0^t \big\| F\big(\vx^n(s),\vy^n(s)\big) - F\big(\vx(s),\vy(s)\big) \big\| \\
&+\sum_{i \in \cN} \sum_{j \neq i} \Big\| \Big(   \frac{\ve_i - \ve_j}{n}   ,  0  \Big) \Big\|  \Bigg| \hat{Y}_{ji}^x \Big(  \int_0^t \bar Q^n_{x:j \to i} \big( \vx^n(s), \vy^n(s)\big) \Big) \Bigg|\\
&+\sum_{i \in \cN} \sum_{j \neq i} \Big\| \Big(   0  ,   \frac{\ve_i - \ve_j}{n}  \Big) \Big\|  \Bigg| \hat{Y}_{ji}^y \Big(  \int_0^t \bar Q^n_{y:j \to i} \big( \vx^n(s), \vy^n(s)\big) \Big) \Bigg|
\end{split}
\end{equation}
where the inequality comes from repeated applications of triangle inequality. Note that $\Big\| \Big(   0  ,   \frac{\ve_i - \ve_j}{n}  \Big) \Big\| = \Big\| \Big(  \frac{\ve_i - \ve_j}{n} , 0  \Big) \Big\| = \sqrt{2}/n$. Thus, we can rewrite the above inequality as
\begin{equation*}
\begin{split}
\big\|   \big( \vx^n(t), \vy^n(t) \big) - \big( \vx(t), \vy(t) \big)  \big \| &\leq \big\|   \big( \vx^n(0), \vy^n(0) \big) - \big( \vx(0), \vy(0) \big)  \big \| \\
&+ \int_0^t \big\| F\big(\vx^n(s),\vy^n(s)\big) - F\big(\vx(s),\vy(s)\big) \big\| \\
&+\sum_{i \in \cN}\sum_{j \neq i}\frac{\sqrt{2}}{n}   \Big| \hat{Y}_{ji}^x \Big(  \int_0^t \bar Q^n_{x:j \to i} \big( \vx^n(s), \vy^n(s)\big) \Big) \Big| \\
&+\sum_{i \in \cN}\sum_{j \neq i}\frac{\sqrt{2}}{n} \Big| \hat{Y}_{ji}^y \Big(  \int_0^t \bar Q^n_{y:j \to i} \big( \vx^n(s), \vy^n(s)\big) \Big) \Big|.
\end{split}
\end{equation*}
(A1) says $\| \big( \vx(0), \vy(0) \big) - \big( \vx^n(0), \vy^n(0) \big) \| = 0$ for all $n \in \N$. From (A1), Proposition \ref{Lipschitz prop}, and using the bounds from \eqref{Q bounds}, we obtain
\begin{equation}
\begin{split}
\big\|   \big( \vx^n(t), \vy^n(t) \big) - \big( \vx(t), \vy(t) \big)  \big \| &\leq   \int_0^t M \big\|   \big( \vx^n(t), \vy^n(t) \big) - \big( \vx(t), \vy(t) \big)  \big \| \\
&+\sum_{i \in \cN}\sum_{j \neq i}\frac{\sqrt{2}}{n} \Big[ \Big| \hat{Y}_{ji}^x \big(n(1+\kappa)t\big) \Big| +  \Big| \hat{Y}_{ji}^y \big(n(1+\kappa)t\big) \Big| \Big].
\end{split}
\label{apply gronwall}
\end{equation}
Define \begin{equation}
\varepsilon_{ji}^n(t) \triangleq \sqrt{2}\Big[ \Big| \hat{Y}_{ji}^x \big(n(1+\kappa)t\big) \Big| +  \Big| \hat{Y}_{ji}^y \big(n(1+\kappa)t\big) \Big| \Big]
\label{epsilon}
\end{equation} 
for any $i,j \in \cN,\ j \neq i$. This quantity is what we would like to control. Observe that

\begin{equation}
\begin{split}
\mP \Big( \sup_{0 \leq t \leq T} \sum_{i \in \cN}\sum_{j \neq i} \frac{1}{n} \varepsilon_{ji}^n(t) \geq \epsilon\Big)  &\leq \sum_{i \in \cN}\sum_{j \neq i} \mP \Big( \sup_{0 \leq t \leq T}  \varepsilon_{ji}^n(t) \geq \frac{n \epsilon}{N(N-1)} \Big) \\
&\leq \sum _{x,y}\sum_{i \in \cN}\sum_{j \neq i} \mP \Big( \sup_{0 \leq t \leq T}  |\hat{Y}(n(1+\kappa)t) | \geq \frac{n \epsilon}{\sqrt{2}N(N-1)} \Big) \\
&= 2N(N-1)\mP \Big( \sup_{0 \leq s \leq n(1+\kappa)T}  |\hat{Y}(s) | \geq \frac{n \epsilon}{\sqrt{2}N(N-1)} \Big) \\
&\leq 4N(N-1)\exp\Bigg( {-n(1+\kappa)T \cdot h\Big( \frac{\epsilon}{\sqrt{2}N(N-1)(1+\kappa)T} \Big)} \Bigg).
\end{split}
\end{equation}
The first inequality comes from the fact that $\mP\big( \sum_{i=1}^k X_i  \geq \epsilon\big) \leq \sum_{i=1}^k \mP \big( X_i \geq \epsilon/n \big)$.. Applying the same to $\varepsilon_{ji}^n(t)$ in \eqref{epsilon}, which contains two terms (one corresponding to \emph{x} and the other to \emph{y}), gives us the second inequality. The centered Poisson processes $\hat{Y}$ are written in an un-indexed manner to emphasize their independence, which gives us the (third) equality. Finally, last inequality is a result of applying Proposition \ref{Poisson bound}.

Applying Lemma \ref{Gronwall} (Gronwall's inequality) to (\ref{apply gronwall}), we get
\begin{equation*}
\begin{split}
\mP \Big( \sup_{0 \leq t \leq T} \big\|   \big( \vx^n(t), \vy^n(t) \big) &- \big( \vx(t), \vy(t) \big)  \big \| \geq \epsilon e^{MT} \Big) \\
&\leq \mP \Big( \sup_{0 \leq t \leq T} \big(\sum_{i \in \cN}\sum_{j \neq i} \frac{1}{n} \varepsilon_{ji}^n(t) \big)e^{MT} \geq \epsilon e^{MT}  \Big) \\
&= \mP \Big( \sup_{0 \leq t \leq T}   \sum_{i \in \cN}\sum_{j \neq i} \frac{1}{n} \varepsilon_{ji}^n(t) \geq \epsilon  \Big) \\
&\leq 2N(N-1)\exp\Bigg( {-n(1+\kappa)T \cdot h\Big( \frac{\epsilon}{2\sqrt{2}N(N-1)(1+\kappa)T} \Big)} \Bigg).
\end{split}
\end{equation*}
This can also be written as
\begin{equation*}
\begin{split}
\mP &\Big(\sup_{0 \leq t \leq T} \big\|  \big( \vx^n(t), \vy^n(t) \big) - \big( \vx(t), \vy(t) \big)  \big \| \geq \epsilon \Big) \\
&\leq 4N(N-1)\exp\Bigg({-n(1+\kappa)T \cdot h\Big( \frac{\epsilon e^{-MT}}{\sqrt{2} N (N-1) (1+\kappa) T} \Big)}\Bigg)
\end{split}
\end{equation*}
which is (\ref{Prob bound}). Now observe that the above bound is finite and decreasing exponentially as $n$ increases. Therefore, 
$$ \sum_{n=1}^\infty  \mP\Big( \sup_{0 \leq t \leq T} \big\|   \big( \vx^n(t), \vy^n(t) \big) - \big( \vx(t), \vy(t) \big)  \big \| \geq \epsilon \Big) < \infty$$
and the almost sure convergence in (\ref{convergence a.s.}) follows from the \textit{Borel Cantelli Lemma}. This completes the proof of Theorem \ref{fluid limit}.
\end{proof}
\section{Lyapunov theory and LaSalle invariance principle} \label{Lyapunov theory}
In this section, we collect some definitions and results from the Lyapunov theory for non-linear systems. By the term \textit{Flow} (or \textit{Semi-flow}) with respect to some deterministic system, denoted by a function $\Phi:\R \times X \rightarrow X$, we mean that $\Phi(x_0,t)$ gives the solution to that deterministic system at time $t$ and starting at $x_0$ at time $0$ (note that $t\geq 0$ always in case of semi-flows, while $t \in (-\infty,+\infty)$ for flows).

Often in literature, the term `Lyapunov function' $V:X \rightarrow \R$ is defined as a non-negative function, with $V(0) =0$ and $d(V)/dt  < 0$ for any $x \in X, x\neq 0$. This is useful for analyzing systems with only one fixed point that can be easily translated to the origin $0 \in X$ without loss of generality. However, our ODE system \eqref{ODE Z} consists of multiple fixed points. The results we state below are accordingly adjusted to cover such general cases.

\begin{definition}(Section 3 in \cite{JB_semiflow}) Consider a flow $\Phi:\R \times X \rightarrow X$ contained within some set $X$. A point in $z \in X$ is a fixed point of the flow $\Phi$ if $\Phi(t,z) = z$ for all time $t \geq 0$. A function $V:X\rightarrow\R$ is called a `Lyapunov function' if
\begin{itemize}
\item[(i)]
$V$ is continuous over $X$,
\item[(ii)]
$V(\phi(t,x)) \leq V(x)$ for all $x \in X$ and $t \geq 0$ (negative semi-definiteness),
\item[(iii)]
If $V(\psi(t)) = c$, where $c$ is some constant, for some periodic oribit $\psi(t)$ for all $t \in R$, then $\psi(t)$ is actually some fixed point $x$ of the flow $\phi$.
\end{itemize}
\label{Def Lyapunov}
\end{definition}
Such Lyapunov functions can be used to prove convergence of a flow induced by a system of ODEs using the following famous theorem.

\begin{theorem}[LaSalle invariance principle](Theorem 3.1 in \cite{JB_semiflow}, Chapter 5 in \cite{Vidyasagar}, Chapter 3 in \cite{Slotine}). Let $V:X\rightarrow\R$ be a `Lyapunov function' for some set $X$ and flow $\Phi$. Let $\gamma^+(x)$ denote the forward (in time) orbit of the flow $\Phi(\cdot,x)$, i.e. starting at $x$. If $\gamma^+(x)$ is relatively compact, and all fixed points of $\Phi$ are isolated, then for all $x \in X$, $\Phi(t,x)\rightarrow z$ for some fixed point $z$.
\label{LaSalle}
\end{theorem}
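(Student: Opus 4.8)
The plan is to prove this via the classical $\omega$-limit set argument, using the monotonicity of $V$ along orbits to pin down a limiting level, and then using the isolation hypothesis to collapse that level set to a single point. Throughout, fix $x \in X$, let $\gamma^+(x) = \{\Phi(t,x) : t \geq 0\}$ be its forward orbit, and let $\omega(x)$ denote the associated $\omega$-limit set, i.e. the set of all subsequential limits $\lim_{k} \Phi(t_k, x)$ over sequences $t_k \to \infty$.

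First I would show that $V$ settles to a constant along the trajectory. By property (ii) of Definition~\ref{Def Lyapunov}, the map $t \mapsto V(\Phi(t,x))$ is non-increasing; since $\gamma^+(x)$ is relatively compact and $V$ is continuous (property (i)), $V$ is bounded below on $\overline{\gamma^+(x)}$, so $V(\Phi(t,x)) \downarrow c$ for some finite constant $c$ as $t \to \infty$. Next I would invoke the standard structural facts about $\omega$-limit sets of relatively compact orbits: $\omega(x)$ is non-empty, compact, connected, and invariant under the flow, and $\mathrm{dist}(\Phi(t,x), \omega(x)) \to 0$ as $t \to \infty$. Continuity of $V$ then transfers the limit onto $\omega(x)$: for any $y \in \omega(x)$, choosing $t_k \to \infty$ with $\Phi(t_k,x) \to y$ gives $V(y) = \lim_k V(\Phi(t_k,x)) = c$, so $V \equiv c$ on $\omega(x)$.

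The crux is to upgrade ``$V$ constant on the invariant set $\omega(x)$'' to ``$\omega(x)$ is a single fixed point.'' Because $\omega(x)$ is invariant, for each $y \in \omega(x)$ the entire orbit $\psi(t) = \Phi(t,y)$ remains in $\omega(x)$, and hence $V(\psi(t)) \equiv c$ for all $t$. Property (iii) of Definition~\ref{Def Lyapunov} then rules out $\psi$ being a genuine (non-stationary) periodic orbit, and the goal is to conclude that each such $y$ must in fact be a fixed point. Once that is established, $\omega(x)$ is a connected set consisting entirely of fixed points, and since the fixed points are assumed isolated, a connected set of isolated points is necessarily a singleton $\{z\}$. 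Combined with $\mathrm{dist}(\Phi(t,x), \omega(x)) \to 0$, this yields $\Phi(t,x) \to z$, which is the claim.

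I expect the main obstacle to be exactly the upgrade in the previous paragraph: bridging from an invariant orbit on which $V$ is constant to the conclusion that the orbit is stationary. Condition (iii) as phrased only excludes non-trivial \emph{periodic} orbits, whereas a priori $\omega(x)$ could harbor non-periodic full orbits on the level $\{V = c\}$ (for instance homoclinic-type orbits), so (iii) alone is not obviously sufficient. The clean resolution is to lean on the isolation of fixed points together with connectedness of $\omega(x)$: any non-stationary orbit in $\omega(x)$ would have non-empty compact $\alpha$- and $\omega$-limit sets lying in $\omega(x)$ on the same level $c$, and this nested structure is incompatible with the fixed points being isolated unless $\omega(x)$ degenerates to a single stationary point. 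In the paper's concrete application this difficulty evaporates, since Proposition~\ref{Lyapunov prop} shows $\tfrac{d}{dt}V < 0$ strictly except at the (isolated) eigenvector fixed points, so the invariant set on which $V$ is constant is already the discrete fixed-point set and the collapse to a single $z$ is immediate.
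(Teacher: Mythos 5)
Note first that the paper itself contains no proof of Theorem \ref{LaSalle}: it is imported verbatim from the cited references as part of a background appendix, so your attempt can only be judged against the statement as transcribed, not against an internal argument. Your $\omega$-limit-set skeleton is the standard route and is correct as far as it goes: $V(\Phi(t,x))$ decreases to a finite limit $c$ by relative compactness and continuity of $V$; $\omega(x)$ is nonempty, compact, connected, invariant, and attracts the orbit; and $V \equiv c$ on $\omega(x)$. You also correctly located the crux: condition (iii) of Definition \ref{Def Lyapunov}, as stated, constrains only \emph{periodic} orbits, which by itself does not let you conclude that every point of $\omega(x)$ is a fixed point.

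The genuine gap is that your proposed repair does not work --- and cannot, because the theorem with (iii) read literally is false. Consider the planar system $\dot{x} = y$, $\dot{y} = x - x^3 - H(x,y)^2 y$ with $V = H = \frac{1}{2}y^2 - \frac{1}{2}x^2 + \frac{1}{4}x^4$, restricted to the compact forward-invariant set $\{H \le 1\}$. Then $\frac{d}{dt}H = -H^2 y^2 \le 0$; the fixed points $(0,0)$, $(\pm 1, 0)$ are isolated; and any orbit along which $V$ is constant either has $y \equiv 0$ (hence is a fixed point) or lies in $\{H = 0\}$, which consists of the saddle and its two homoclinic loops --- none of them periodic --- so conditions (i)--(iii) all hold. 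Yet an orbit starting just outside the figure-eight keeps $H > 0$ for all time (since $\{H=0\}$ is invariant), cannot converge to the saddle (whose one-dimensional stable manifold is exactly the figure-eight) nor to $(\pm 1,0)$ (where $H = -\tfrac{1}{4} < 0$), and so converges to no fixed point; its $\omega$-limit set is the whole figure-eight together with the saddle. This example also refutes your specific patch: the non-stationary complete orbits in that $\omega$-limit set have $\alpha$- and $\omega$-limit sets equal to the isolated saddle, so no ``incompatibility with isolation'' ever arises. The correct hypothesis --- and the one used in the cited sources --- is that $V$ constant along a \emph{complete} orbit forces that orbit to be a single equilibrium; with that version of (iii), your steps close the proof immediately: every point of $\omega(x)$ lies on a complete orbit in $\omega(x)$ with $V \equiv c$, hence is fixed, and a nonempty connected set of isolated points is a singleton. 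Your closing remark is the right observation for this paper: Proposition \ref{Lyapunov prop} gives strict decrease of $V$ off the fixed-point set, so the paper's application satisfies the stronger complete-orbit condition and its invocation of the theorem is unaffected by the weak transcription of (iii).
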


To help characterize fixed points of the system, we give the definitons of Lyapunov stability, and then state a theorem that helps characterize the stable and unstable fixed points in terms of the Lyapunov function $V:X\rightarrow \R$.
\begin{definition} (Stable, unstable and asymptotically stable fixed points). 
\begin{itemize}
\item[(i)]
A fixed point $z \in X$ of a flow $\Phi$ is `stable' if for all $\epsilon > 0$, there exists a $\delta > 0$ such that for any $x \in X$, $\| x - z \| < \delta \implies \| \Phi(t,x) - z \| < \epsilon$ for all $t \geq 0$.
\item[(ii)]
A fixed point $z \in X$ of a flow $\Phi$ is `unstable' if it is not stable.
\item[(iii)]
A fixed point $z \in X$ of a flow $\Phi$ is `asymptotically stable' if it is stable and there exists a $\delta>0$ such that $x \in X$, $\| x - z \| < \delta \implies \| \Phi(t,x) - z \| \rightarrow 0$ as $t \rightarrow \infty$.
\end{itemize}
\label{Def stability}
\end{definition}

\begin{theorem} (Theorems 4.1 and 4.2 in \cite{JB_semiflow}, Chapter 5 in \cite{Vidyasagar}, Chapter 4 in \cite{Slotine}).
Let $z \in X$ be an isolated fixed point of flow $\Phi$, and let $V:X\rightarrow \R$ be a Lyapunov function. Let $\gamma^+(x)$ be relatively compact for any $x \in X$ with $\gamma^+(x)$ bounded. Then,
\begin{itemize}
\item[(i)]
$z$ is `asymptotically stable' if there exists a $\delta>0$ such that $V(x)>V(z)$ for any $x \in X$ where $\|x - z\| < \delta$, implying that $z$ is a local minimizer of $\ V$.
\item[(ii)]
$z$ is `unstable' if it is not a local minimizer of $\ V$, i.e. for any $\epsilon>0$, there exists an $x \in X$ such that $\| x-z \|< \epsilon$ and $V(x) < V(z)$.
\end{itemize}
\label{stability prop}
\end{theorem}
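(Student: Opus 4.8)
The plan is to prove the two claims separately, both times exploiting that $V$ is only \emph{non-increasing} along the flow (property (ii) of Definition~\ref{Def Lyapunov}), so classical strict-Lyapunov arguments do not apply directly and we must lean on the LaSalle invariance principle (Theorem~\ref{LaSalle}) together with the isolation of $z$ to pin down limit behaviour.

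For part (i), assume $z$ is a strict local minimizer, so there is $\delta>0$ with $V(x)>V(z)$ whenever $0<\|x-z\|<\delta$. First I would establish ordinary stability by a barrier argument. Fix $\epsilon\in(0,\delta)$ small enough that the closed ball $\bar B_\epsilon(z)$ contains no fixed point other than $z$ (possible since $z$ is isolated). The sphere $\{x:\|x-z\|=\epsilon\}$ is compact and avoids $z$, so $m \triangleq \min_{\|x-z\|=\epsilon} V(x)$ is attained and satisfies $m>V(z)$. By continuity of $V$ at $z$, pick $\delta'\in(0,\epsilon)$ with $V(x)<m$ on $B_{\delta'}(z)$. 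Then no trajectory starting in $B_{\delta'}(z)$ can reach the sphere: at a first exit time $t_1$ one would have $V(\Phi(t_1,x_0))\ge m$, contradicting $V(\Phi(t_1,x_0))\le V(x_0)<m$. This yields stability. For attractivity, take any $x_0\in B_{\delta'}(z)$; its forward orbit is relatively compact by hypothesis, so Theorem~\ref{LaSalle} gives $\Phi(t,x_0)\to z'$ for some fixed point $z'$. But the trajectory never leaves $B_\epsilon(z)$, whose only fixed point is $z$, forcing $z'=z$. Hence $\Phi(t,x_0)\to z$, and $z$ is asymptotically stable.

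For part (ii), assume $z$ is \emph{not} a local minimizer and argue by contradiction, supposing $z$ were stable. Choose $\epsilon_0>0$ with $\bar B_{\epsilon_0}(z)$ containing no fixed point but $z$; stability then supplies a $\delta_0>0$ trapping every trajectory started in $B_{\delta_0}(z)$ inside $B_{\epsilon_0}(z)$ for all $t\ge0$. By the non-minimizer hypothesis I pick $x_0$ with $\|x_0-z\|<\delta_0$ and $c_0\triangleq V(x_0)<V(z)$. Since $V$ is non-increasing, $V(\Phi(t,x_0))\le c_0<V(z)$ for all $t$. As the relatively compact orbit stays in $\bar B_{\epsilon_0}(z)$, Theorem~\ref{LaSalle} again forces convergence to the unique nearby fixed point $z$, whence by continuity $V(\Phi(t,x_0))\to V(z)$. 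This contradicts $V(\Phi(t,x_0))\le c_0<V(z)$, so $z$ must be unstable.

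I expect the main obstacle to be the stability step of part (i): it relies on compactness of $\epsilon$-spheres and a well-defined first exit time, which must be handled with care because the relevant state space here is the lower-dimensional manifold $\tilde S$ (the unit sphere intersected with $\vw^T\ones=0$) rather than all of $\R^N$, so all balls and spheres are understood relative to $X=\tilde S$, where compactness and continuity of the flow still hold. The remaining conceptual point shared by both parts is that the weak, non-strict Lyapunov property only guarantees convergence to \emph{some} fixed point via LaSalle; it is the isolation of $z$ combined with the trapping furnished by stability that singles out $z$ as the actual limit.
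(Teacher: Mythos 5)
Your proof is correct, but there is no in-paper proof to compare it against: the paper states Theorem~\ref{stability prop} without proof, citing it directly to \cite{JB_semiflow}, \cite{Vidyasagar} and \cite{Slotine}. What you have written is essentially the standard textbook argument from those sources, correctly adapted to the weak (merely non-increasing) Lyapunov setting: for (i), the barrier argument via $m=\min_{\|x-z\|=\epsilon}V(x)>V(z)$ and a first-exit-time contradiction gives stability without needing $\dot V<0$, and you correctly handle the attractivity step by combining Theorem~\ref{LaSalle} with the trapping region and the absence of other fixed points in $\bar B_\epsilon(z)$ (including the detail that the limit cannot sit on the sphere, since you chose the \emph{closed} ball free of other fixed points); for (ii), the contradiction $V(\Phi(t,x_0))\le V(x_0)<V(z)$ against $V(\Phi(t,x_0))\to V(z)$ forced by LaSalle-plus-trapping is exactly right. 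Two minor remarks. First, your invocations of Theorem~\ref{LaSalle} require \emph{all} fixed points of $\Phi$ to be isolated and all forward orbits relatively compact, whereas Theorem~\ref{stability prop} as stated only assumes $z$ itself is isolated; so strictly speaking you prove the result under slightly stronger hypotheses. In the paper's actual application to the normalized system \eqref{w system} on the compact set $\tilde S$, the fixed points are the finitely many normalized eigenvectors $\pm\tilde\vecv_k$, all isolated, and every orbit is relatively compact, so nothing is lost there. Second, you correctly read the hypothesis of (i) as a \emph{strict} local minimum for $x\ne z$ (the statement as printed would otherwise be vacuous at $x=z$), which matches the intended meaning and the way Theorem~\ref{convergence to FV} uses it via Lemma~\ref{Rayleigh lem}.
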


\end{document}